\documentclass[10pt]{amsart}
\usepackage{mathrsfs}
\usepackage{amsfonts}
\usepackage{amsmath}
\usepackage{amssymb}
\usepackage{amsthm,amscd}
\usepackage{tikz}
\usetikzlibrary{cd}
\newcommand{\Ha} {\mathbb {H}}
\newcommand{\D} {\mathbb {D}}
\newcommand{\R} {\mathbb {R} }

\newcommand{\N}{\mathbb {N}}
\newcommand{\C} {\mathbb {C}}
\newcommand{\Q} {\mathbb {Q}}
\newcommand{\T} {\mathcal {T}}
\newcommand{\dist} {\mathbf{d}}
\newcommand{\BD} {\mathcal {BD}}
\newcommand{\QD} {\mathcal {QD}}
\newcommand{\pt} {\partial}

\newcommand{\wh}{\widehat}
\newcommand{\id}{\operatorname{Id}}

\newcommand{\Thin} {\operatorname{Thin}}
\newcommand{\inj} {\operatorname{inj}}

\newcommand{\Stab} {\operatorname{Stab}}
\newcommand{\Aut} {\operatorname{Aut}}
\newcommand{\QMod} {\operatorname{QMod}}
\newcommand{\lcm} {\operatorname{lcm}}
\newcommand{\Img} {\operatorname{Im}}
\newcommand{\Real} {\operatorname{Re}}
\newcommand{\Teich}{{\mathcal {T}}}

\newcommand{\Belt} {\mathcal {BD}}

\newcommand{\Hol}  {\mathbf{\Phi}} 
\newcommand{\Fol}  {\mathbf{\Psi}} 
 
\newcommand{\imu}  {\mathbf{i}}
\newcommand{\Emb}  {\mathcal{E}} 
\newcommand{\Embo}  {\mathcal{F}} 

\newtheorem{corollary}{Corollary}[section]
\newtheorem{theorem}[corollary]{Theorem}
\newtheorem{lemma}[corollary]{Lemma}
\newtheorem{proposition}[corollary]{Proposition}
\newtheorem{claim}[corollary]{Claim}
\newtheorem{definition}[corollary]{Definition}

\theoremstyle{remark}
\newtheorem{remark}{Remark}

\begin{document}

\title[Carath\`eodory $\ne$ Teichm\"uller] {Carath\`eodory's  Metric on Teichm\"uller Spaces}
\author[Lin and  Markovic]{Yiran Lin \quad\quad Vladimir Markovi\'c}

\address{\newline YMSC   \newline Tsinghua University  \newline Beijing, China}

\today

\subjclass[2000]{Primary 20H10}

\begin{abstract}  Let $S$ be an arbitrary Riemann surface whose Teichm\"uller space $\T(S)$  has dimension at least two.  A long standing problem (see \cite{F-M} and \cite{McMullen}) is to determine whether the Carath\'eodory metric $\dist_C$ agrees with the Teichm\"uller metric $\dist_\T$ on $\T(S)$. It was shown in \cite{Markovic} that $\dist_C\ne \dist_\T$ when $S$ is a closed surface of genus at least two. In this paper we study the general case, and prove that $\dist_C\ne \dist_\T$ on $\T(S)$ except possibly  on the following seven Teichm\"uller spaces: $\T^1_{0,0}$,  $\T^1_{0,1}$,  $\T^2_{0,0}$,  $\T^1_{0,2}$, $\T^2_{0,1}$,  $\T^3_{0,0}$, and $\T^3_{0,1}$.
\end{abstract}

\maketitle

\section{Introduction} Every complex Banach manifold $X$ naturally carries two pseudometrics, the \textsl{Carath\'eodory} pseudometric $\dist_C$, and the \textsl{Kobayashi} pseudometric $\dist_K$.  An important and well studied question is to decide whether $\dist_C\equiv \dist_K$ on a given complex manifold $X$. A particularly interesting case is when $X=\T(S)$ is the Teichm\"uller space of an arbitrary  Riemann surface $S$. Addressing this question  is the main subject of this paper.

\begin{remark} We observe that the Carath\'eodory metric $\dist_C$ should not be confused with the Carathéodory-Reiffen metric $\dist_{CR}$ which is obtained by integrating the infinitesimal form of $\dist_C$. The inequalities $\dist_C\le \dist_{CR}\le \dist_K$ hold on every $X$, and there are examples where each inequality is strict.
\end{remark}

Let $S$ be a  Riemann surface (all surfaces in this paper are assumed to orientable and connected). If $S$ is a $n$-punctured Riemann  sphere, for some $n\in \{0,1,2,3\}$, then the Teichm\"uller space $\T(S)$ consists of a single point. We exclude these cases from further discussions. Otherwise, the Teichm\"uller space $\T(S)$ is a complex Banach manifold (either of finite or infinite dimensions). By $\dist_\T$ we denote the Teichm\"uller metric on $\T(S)$.

A key result in Teichm\"uller theory  says that the Teichm\"uller and Kobayashi metrics agree on any Teichm\"uller space (see \cite{Royden}, \cite{E-K}, \cite{Gardiner}). On the other hand, it was shown in  \cite{Earle}  that  $\dist_C$ is a complete metric on $\T(S)$. After these initial results, the  question as to whether  $\dist_\T\equiv \dist_C$ on $\T(S)$ for an arbitrary Riemann surface $S$ came into focus. It is elementary to see that the two metrics agree on $\T(S)$ when $\dim(\T(S))=1$ (this holds when $S$ is a four punctured sphere, a torus, or a once punctured torus). Thus, the interesting case is when $\dim(\T(S))\ge 2$ (including the case  $\dim(\T(S))=\infty$).

\begin{remark} The  questions regarding the relationship between $\dist_\T$ and $\dist_C$ have been studied over the years. Important  results have been proved in \cite{Kra}, \cite{Krushkal}, \cite{E-H-H-M}, \cite{McMullen},   \cite{Markovic}, \cite{G-M}, \cite{D-M}, \cite{L-S}. The Carathéodory-Reiffen on Teichm\"uller spaces was extensively studied in \cite{Yeung}. 
\end{remark}

The Teichm\"uller space of closed Riemann surfaces of genus $g$, with $n$ punctures, and $b$ discs removed, is denoted by $\T_{g,n}^b$. In \cite{Markovic} it was  shown that $\dist_C\neq \dist_\T$ on $\T^0_{g,0}$, for every $g\ge 2$. In this paper, we attack the general case and prove that the same holds for  all but possibly seven  Teichm\"uller spaces (we remind the reader that there are uncountably many mutually non-isomorphic Teichm\"uller spaces \cite{Markovic-1}, \cite{E-M}).

\begin{theorem}\label{theorem: main theorem} Let $S$ be a  Riemann surface such that $\dim(\T(S))\ge 2$. Then $\dist_C\neq \dist_\T$ on $\T(S)$ unless  $\T(S)$ is one from the following list: $\T^1_{0,0}$,  $\T^1_{0,1}$,  $\T^2_{0,0}$,  $\T^1_{0,2}$, $\T^2_{0,1}$,  $\T^3_{0,0}$, and $\T^3_{0,1}$.
\end{theorem}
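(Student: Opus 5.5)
The approach is to reduce the general case to a finite list of "model" Teichm\"uller spaces on which $\dist_C\neq\dist_\T$ can be established directly. The reduction uses holomorphic maps between Teichm\"uller spaces that are isometric for $\dist_\T$ along a suitable Teichm\"uller disc. The basic principle is this. Suppose $F\colon\T(S_1)\to\T(S)$ is holomorphic and restricts to a $\dist_\T$-isometry on some Teichm\"uller disc $\Delta\subset\T(S_1)$. Suppose also that some pair $p,q\in\Delta$ has $\dist_C(p,q)<\dist_\T(p,q)$ in $\T(S_1)$. Then holomorphic maps contract $\dist_C$, and $\dist_\T=\dist_K$ (Royden, Earle--Kra, Gardiner), so
\[
\dist_C^{\T(S)}(F(p),F(q))\le \dist_C^{\T(S_1)}(p,q)<\dist_\T^{\T(S_1)}(p,q)=\dist_\T^{\T(S)}(F(p),F(q)).
\]
Hence $\dist_C\ne\dist_\T$ on $\T(S)$. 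Such maps $F$ come from two sources. The first is lifting to finite (possibly branched, orbifold) covers, which are global isometric embeddings. The second is the inclusion of a subsurface, obtained by gluing a fixed structure on the complement, or equivalently the forgetful and filling maps along Teichm\"uller discs generated by quadratic differentials supported on the subsurface. For the second source one must check isometry only on the relevant disc. I would arrange this by choosing the bad pair $p,q$ on a disc given by a Jenkins--Strebel differential whose cylinders survive the gluing, so that extremal length computations give equality.

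The key steps, in order, are as follows. First, identify the model cases where the result is already known or can be shown by the method of \cite{Markovic}: closed genus $g\ge2$ surfaces. I would extend that method to a small set of base cases, namely $\T^0_{0,5}$, $\T^0_{1,2}$, and low-complexity bordered surfaces such as $\T^1_{1,0}$ and $\T^1_{0,3}$. In \cite{Markovic} the argument used the Teichm\"uller disc of a Jenkins--Strebel differential with two cylinders. There one compares the Teichm\"uller metric on the associated bidisc-like embedding with the Carath\'eodory metric, showing that $\dist_C$ sees an $\ell^\infty$ product structure while $\dist_\T$ does not. I would redo that computation for punctured spheres, where the Teichm\"uller space is finite dimensional and the relevant quadratic differentials are explicit.

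Second, I would do the topological bookkeeping for every $S$ with $\dim\T(S)\ge2$, including infinite type. For each such $S$ I would show that it either contains an essential subsurface of one of the model types, with a Jenkins--Strebel disc realizing isometry, or finitely covers such a surface. I would also show that the only spaces escaping both mechanisms are the seven listed: disc-type and sphere-type spaces with few boundary components and punctures ($\T^1_{0,0}$, $\T^1_{0,1}$, $\T^2_{0,0}$, $\T^1_{0,2}$, $\T^2_{0,1}$, $\T^3_{0,0}$, $\T^3_{0,1}$). These are infinite dimensional spaces of planar domains with few ends. In them no two disjoint essential annuli with a usable Strebel differential exist, which is presumably why the argument fails there.

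The main obstacle I expect is the isometric-embedding step for subsurfaces in the infinite dimensional and bordered cases. There, gluing does not in general preserve Teichm\"uller distance, and one must work with Strebel differentials on surfaces with ideal boundary, where extremality of the Teichm\"uller map is subtle (Hamilton--Krushkal conditions, Reich--Strebel inequality). My first attempt would be to use differentials with closed trajectories whose cylinders have core curves that remain essential and nonperipheral in $S$. I would then apply the Reich--Strebel main inequality to show that the affine stretch along the cylinders stays uniquely extremal in $S$. This would make the composed map an isometry on the whole disc.
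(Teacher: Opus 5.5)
Your first reduction mechanism (inclusion of a subsurface) cannot work as stated, and your bordered base case is left undone. The sections below explain each gap and what the paper does instead.

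\textbf{The subsurface step.} Your reduction needs a holomorphic map into $\T(S)$ that is an \emph{exact} $\dist_\T$-isometry on a Teichm\"uller disc through the bad pair. Inclusion of a subsurface can never provide this.

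Let $Y\subset S$ be an essential subsurface with compact closure, and extend Beltrami differentials by $0$, i.e.\ glue a fixed complement. Then for $Y_1\neq Y_2$ the images in $\T(S)$ are strictly closer than $Y_1,Y_2$ in $\T(Y)$. To see this, suppose equality held. An extremal map $Y_1\to Y_2$, extended conformally across the complement, would then be extremal in $\T(S)$. By Hamilton--Krushkal there would be $\psi_n\in\QD(S)$ with $\|\psi_n\|_1=1$ and $\int_{S\setminus Y}|\psi_n|\to 0$. Then $\psi_n\to 0$ locally uniformly on $S$, so also $\int_Y|\psi_n|\to0$, a contradiction. For finite analytic type this is just Teichm\"uller uniqueness: an extremal coefficient has constant modulus, and yours vanishes on an open set.

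Three consequences follow:
\begin{itemize}
\item no Reich--Strebel argument can keep the affine stretch on the cylinders of $Y$ extremal in $S$;
\item no nonzero element of $\QD(S)$ is supported on $Y$;
\item there is no such map out of $\T^0_{0,5}$ at all, since its punctures must first be turned into holes.
\end{itemize}

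The missing idea is to accept an approximate isometry on a bounded set, as in Theorem \ref{theorem: embedding Teichmuller space}:
\begin{itemize}
\item For bounded $W\subset\T^0_{0,n}$, remove M\"obius-varying round discs around the punctures. This holomorphically produces $n$-holed spheres whose peripheral geodesics are shorter than $\epsilon$.
\item Extend by zero into $\T(S)$.
\item Use a thick--thin argument (Mori, Douady--Earle, Margulis collars). It shows that quasiconformal maps between the images, restricted to compacta, converge to a map $X_1\to X_2$ in the right class.
\item This gives $\dist_\T(X_1,X_2)\le\dist_\T(H(X_1),H(X_2))+\delta$.
\end{itemize}
The additive $\delta$ is then absorbed by Lemma \ref{lemma: basic lemma 3}, which keeps $\dist^W_C(X_1,X_2)$ strictly below $\dist_\T(X_1,X_2)-\delta$ for suitable bounded $W$.

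Without this step you do not reach the surfaces with at least three pairs of pants. In particular you reach no surface of infinite type, since such a surface cannot finitely cover a finite-type one.

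\textbf{The bordered base case.} The surfaces of types $(0,3,1)$, $(0,2,2)$, $(0,0,4)$, $(1,0,1)$, $(1,1,1)$ and $(1,0,2)$ do not accommodate $\Sigma_{0,5}$, for Euler characteristic reasons. The paper reaches them only from $\dist_C\neq\dist_\T$ on $\T^1_{0,3}$, via \emph{fully ramified} double covers. Note that a branched cover in which some branch point has an unramified preimage does not in general induce an isometric embedding, contrary to your phrasing.

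You propose to redo \cite{Markovic} for punctured spheres, where the Teichm\"uller space is finite dimensional. That says nothing about $\T^1_{0,3}$, and you do not anticipate the new difficulty there. The smooth path $\sigma_1(t)=S(a_0,0,q_0-t)$, along which non-smoothness must be detected, leaves the polyplane. This is because the biholomorphism $G_t$ onto $S(a(t),b(t),q_0)$ does not respect the boundary marking.

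The paper handles this as follows:
\begin{itemize}
\item It compares $\sigma_1$ with a path $\sigma_2$ lying inside the polyplane.
\item It builds an explicit correction $\mu_t$ with $\|\mu_t\|_\infty=O(r(t))$ and $\int\mu_t\psi=o(r(t))$.
\item This yields $|\Fol(\sigma_1(t))-\Fol(\sigma_2(t))|=O(t^2/\log^2 t^{-1})$.
\item It needs the appendix, which uses the Jenkins--Strebel property, to justify the derivative formula for $\Fol$ in infinite dimensions.
\end{itemize}
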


It remains an open problem to determine whether the same holds on these seven spaces.

\subsection{$\Sigma_{g,n}$-accommodating surfaces}  We let $\Sigma_{g,n}$ denote a topological surface of genus $g$ with $n$  holes.

\begin{definition} A Riemann surface $S$ is said to be $\Sigma_{g,n}$-accommodating if there exists an essential (topological) embedding $\Sigma_{g,n}\hookrightarrow S$. (Recall that an embedding is said to be essential if the induced map between the fundamental groups is injective.) 
\end{definition}

The first key idea in this paper is to show that if a Riemann surface $S$  is $\Sigma_{0,n}$-accommodating, and  $\dist_C\neq \dist_\T$ on $\T_{0,n}^0$, then $\dist_C\neq \dist_\T$ on $\T(S)$ as well. The proof of this fact rests on the following theorem.

\begin{theorem}\label{theorem: embedding Teichmuller space}
Let $S$ be a Riemann surface that is $\Sigma_{0,n}$-accommodating. Let $W\subset\T_{0,n}^0$ be a bounded open subset and let $\delta>0$ be any positive number. Then there exists a holomorphic map $H:W\rightarrow \T(S)$ such that
\begin{align}\label{eq1}
\dist_{\T}(X_1,X_2) \le \dist_{\T}(H(X_1),H(X_2))+\delta
\end{align}
for all $X_1,X_2\in W$.
\end{theorem}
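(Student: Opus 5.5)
The plan is to realize each $X\in W$ as a sphere with $n$ holes that is conformally glued into $S$ along a very long, very thin collar. The holomorphic map $H$ is obtained by quasiconformal deformation supported on the embedded piece, and the inequality \eqref{eq1} follows from a lower bound on $\dist_\T$ of $S$ via extremal length of curves inside the embedded copy of $\Sigma_{0,n}$.

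\textbf{Step 1 (model embedding).} Fix a basepoint $X_0\in\T^0_{0,n}$. Let $Y_0$ be $X_0$ with small round discs removed around the $n$ punctures. Because $S$ is $\Sigma_{0,n}$-accommodating, we can choose the essential embedding $\Sigma_{0,n}\hookrightarrow S$ with image $E\subset S$ such that:
\begin{itemize}
\item $E$ is conformally equivalent to a copy of $Y_0$;
\item each boundary curve of $E$ is the core of a collar in $S$ whose modulus is at least $M$, with $M$ large to be fixed later.
\end{itemize}
Achieving the large collars may require first changing the base complex structure on $S$. This is harmless, because $\T(S)$ does not depend (up to biholomorphism and isometry) on the choice of basepoint. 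To realize it, pinch along the boundary curves of the essential subsurface: a quasiconformal deformation of $S$ supported near $\partial E$ makes these curves hyperbolically short. The essential-embedding hypothesis guarantees that these curves are nontrivial, so the deformation is genuine.

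\textbf{Step 2 (holomorphic map).} Since $W$ is bounded, every $X\in W$ has the form $X=[\mu_X]$ for a Beltrami differential $\mu_X$ on $X_0$. We can choose $\mu_X$ to depend holomorphically on $X$, for instance via a local holomorphic section of the Bers projection defined over a neighborhood of $\overline W$. By shrinking the removed discs, or by modifying the sections, we can take $\mu_X$ supported away from neighborhoods of the punctures and uniformly bounded, $\|\mu_X\|_\infty\le k<1$. Transplant $\mu_X$ to $E\subset S$, extend it by $0$, and set $H(X)=[\mu_X]\in\T(S)$. This map is holomorphic because the map $\mu\mapsto[\mu]$ is holomorphic.

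\textbf{Step 3 (distance estimate).} I expect this to be the main obstacle. By Kerckhoff's formula,
\[
\dist_\T(X_1,X_2)=\tfrac12\log\sup_\gamma \frac{\mathrm{Ext}_{X_1}(\gamma)}{\mathrm{Ext}_{X_2}(\gamma)},
\]
with the supremum over simple closed curves $\gamma$ on $\T^0_{0,n}$. For such $\gamma$, consider the corresponding curve in $E\subset S$ and compare its extremal length in $H(X_i)$ with its extremal length in $X_i$. The target is to show
\[
\left|\log\frac{\mathrm{Ext}_{H(X_i)}(\gamma)}{\mathrm{Ext}_{X_i}(\gamma)}\right|\le \varepsilon(M)\to 0
\]
uniformly over $\gamma$ and over $X_i\in W$.
\begin{itemize}
\item \emph{Upper bound on $\mathrm{Ext}_{H(X_i)}$.} Restrict the extremal metric of $X_i$ to the image of $E$. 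Because $\mu$ is supported away from the punctures, the cut-off metric loses only mass near the punctures. The lost portion is controlled by the moduli of the punctured-disc neighborhoods, which tend to infinity as $M$ grows.
\item \emph{Lower bound on $\mathrm{Ext}_{H(X_i)}$.} Use the long collars: the part of $H(X_i)$ outside $E$ is separated from $E$ by annuli of modulus at least $M$. A Gr\"otzsch-type or length–area argument then shows that admissible metrics gain almost nothing from the exterior. Equivalently, the quadratic differential realizing $\mathrm{Ext}_{H(X_i)}(\gamma)$ has only $o(1)$ of its norm outside $E$.
\end{itemize}
The delicate point is uniformity over all $\gamma$. My first attempt would be to work with the ratio of extremal lengths rather than with the extremal lengths themselves. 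The contribution of the exterior to the ratio is bounded by a factor of the form $1+Ce^{-cM}$, independent of $\gamma$, obtained from the modulus of the collar.

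Combining the two bounds, the extremal-length ratios in $S$ differ from those on $\T^0_{0,n}$ by at most $\varepsilon(M)$. This gives
\[
\dist_\T(X_1,X_2)\le \dist_\T(H(X_1),H(X_2))+\varepsilon(M),
\]
and choosing $M$ large enough that $\varepsilon(M)<\delta$ yields \eqref{eq1}.
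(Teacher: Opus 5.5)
You have essentially the paper's architecture: a shrunken copy of $X$ glued into $S$ along long collars, with $H$ induced by extending Beltrami differentials by zero, i.e.\ $H=I\circ J_\epsilon$. Your reduction in Step 3 is also legitimate. Quasi-invariance of extremal length gives $\dist_\T(H(X_1),H(X_2))\ge\frac12\log\sup_\gamma \mathrm{Ext}_{H(X_1)}(\gamma)/\mathrm{Ext}_{H(X_2)}(\gamma)$ on any $\T(S)$, and Kerckhoff's formula holds on $\T^0_{0,n}$. But this turns the theorem into the two-sided estimate $e^{-\varepsilon}\le \mathrm{Ext}_{H(X)}(\gamma)/\mathrm{Ext}_X(\gamma)\le e^{\varepsilon}$, uniformly over all simple closed curves $\gamma$ and all $X\in\overline W$. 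That estimate is the whole content of the theorem in your approach, and it is not proved.

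(i) Your ``upper bound'' bullet uses a test metric, namely the extremal metric of $X$ cut off to $E$. A test metric only ever gives a \emph{lower} bound $\mathrm{Ext}\ge L_\rho^2/A(\rho)$, so this bullet belongs to the other inequality. The upper bound you actually need reduces, by monotonicity, to $\mathrm{Ext}_{X\setminus D}(\gamma)\le e^{\varepsilon}\mathrm{Ext}_X(\gamma)$ for small discs $D$ around the punctures. This is not a local matter. The Jenkins--Strebel cylinder of $\gamma$ on $X$ has the punctures on its boundary. For complicated $\gamma$ its height, in the unit-area normalisation, tends to $0$ and can be much smaller than the $|q_\gamma|$-size of the removed discs, so the cylinder cannot simply be truncated. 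Some global input would be needed, for instance continuity of extremal length on projective measured laminations together with compactness; the proposal has none.

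(ii) For the lower bound, the ``Gr\"otzsch-type'' argument and the $\gamma$-independent factor $1+Ce^{-cM}$ are asserted, not derived. With the natural test metric $|q_\gamma|^{1/2}$ transplanted to $E$, crossing the collars is essentially free, so the collar modulus is not what controls the error. One would have to show two things. First, each excursion of a competing curve out of $E$ is homotopic rel endpoints into a single boundary curve of $E$, which uses $\pi_1$-injectivity. Second, the number of excursions is at most a constant times the $|q_\gamma|$-length, with constants uniform over unit-norm $q\in\QD(X)$ and $X\in\overline W$.

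The paper sidesteps uniformity over curves entirely. For a fixed pair it takes $K_k$-quasiconformal maps $f_k:H_{\epsilon_k}(X_1)\to H_{\epsilon_k}(X_2)$. By the Schwarz lemma, the images of the peripheral geodesics of $Y^j_k$ have length at most $\epsilon_k$. Quasiconformal maps move $r$-thin parts into $Cr$-thin parts (Lemma~\ref{lemma-mozda-5}), so $f_k$ carries the piece $Z^1_k(r)$ of $\iota^1_k(Y^1_k)$ lying outside the thin collars into $\iota^2_k(Y^2_k)$. Hence $(\iota^2_k)^{-1}\circ f_k\circ\iota^1_k$ is defined on an exhaustion of $X_1$. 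A normal-families limit then gives a $K$-quasiconformal map $X_1\to X_2$ in the correct class, so $\dist_\T(X_1,X_2)\le\limsup_k\dist_\T(H_{\epsilon_k}(X_1),H_{\epsilon_k}(X_2))$, and no curve-by-curve estimate is ever needed.

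A secondary point concerns Step 2. You assume a holomorphic family $\mu_X$ over a neighbourhood of $\overline W$ whose supports avoid fixed neighbourhoods of the punctures. This is essentially the content of Theorem~\ref{thm-2}, which uses M\"obius maps $A_i(X)$ near the punctures plus a holomorphic-motion argument, and it needs proof. It is cleaner to obtain holomorphy at the level of Teichm\"uller spaces via $I\circ J_\epsilon$ than to demand a holomorphic lift to $\BD_1(X_0)$.
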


As an immediate consequence, we get:
\begin{theorem}\label{theorem: accommodating case}
Let $S$ be a Riemann surface that is $\Sigma_{0,n}$-accommodating. Suppose that $\dist_C\neq \dist_\T$ on $\T_{0,n}^0$.
Then $\dist_C\neq \dist_\T$ on $\T(S)$.
\end{theorem}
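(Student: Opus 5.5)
We argue by contraposition using Theorem \ref{theorem: embedding Teichmuller space} together with the two monotonicity properties of the Carath\'eodory metric. Holomorphic maps do not increase $\dist_C$. Moreover, on every Teichm\"uller space $\dist_C\le \dist_K=\dist_\T$. Suppose, contrary to the claim, that $\dist_C\equiv\dist_\T$ on $\T(S)$. The assumption $\dist_C\ne\dist_\T$ on $\T^0_{0,n}$, combined with $\dist_C\le\dist_\T$ there, gives points $X_1,X_2\in\T^0_{0,n}$ with
\[
\epsilon:=\dist_\T(X_1,X_2)-\dist_C(X_1,X_2)>0 .
\]

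The point that needs care is that Theorem \ref{theorem: embedding Teichmuller space} only provides a holomorphic map on a bounded open set $W$, whereas $\dist_C$ on $\T^0_{0,n}$ is defined through holomorphic functions on the whole space. To handle this, fix a large Teichm\"uller ball $W=B_R(X_1)$ with $R>\dist_\T(X_1,X_2)$, and compare $\dist_C^{\T^0_{0,n}}$ with the Carath\'eodory distance $\dist_C^W$ of $W$ itself. Restriction of functions shows only $\dist_C^{W}\ge \dist_C^{\T^0_{0,n}}$, which is the wrong direction. So we also need $\dist_C^{W}(X_1,X_2)\to\dist_C^{\T^0_{0,n}}(X_1,X_2)$ as $R\to\infty$; I expect this approximation step to be the main obstacle.

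I would try to prove it through the Bers embedding, which makes $\T^0_{0,n}$ a bounded domain in $\C^{n-3}$ exhausted by the balls $B_R$. Extremal functions $f_R:B_R\to\D$ for $\dist_C^{B_R}(X_1,X_2)$ form a normal family, so a subsequence converges locally uniformly on $\T^0_{0,n}$ to a holomorphic $f$ into the closed disc. Since the values at $X_1,X_2$ stay in a compact part of $\D$, the limit is nonconstant and maps into $\D$. Consequently $\limsup_R \dist_C^{B_R}(X_1,X_2)\le \dist_C^{\T^0_{0,n}}(X_1,X_2)$, and with the restriction inequality the limit is equal. If this exhaustion argument fails for some reason, the fallback is to check whether the statement of Theorem \ref{theorem: embedding Teichmuller space} can be run with $W$ ranging over an exhaustion, and to take limits in the same way.

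With the approximation in hand, choose $R$ so that $\dist_C^{W}(X_1,X_2)<\dist_C^{\T^0_{0,n}}(X_1,X_2)+\epsilon/3$, and apply Theorem \ref{theorem: embedding Teichmuller space} with $\delta=\epsilon/3$ to get a holomorphic $H:W\to\T(S)$. Then, using that $H$ does not increase $\dist_C$ and the assumption $\dist_C\equiv\dist_\T$ on $\T(S)$,
\[
\dist_\T(X_1,X_2)\le \dist_\T(H(X_1),H(X_2))+\tfrac{\epsilon}{3}
=\dist_C^{\T(S)}(H(X_1),H(X_2))+\tfrac{\epsilon}{3}
\le \dist_C^{W}(X_1,X_2)+\tfrac{\epsilon}{3}.
\]
The right-hand side is less than $\dist_C^{\T^0_{0,n}}(X_1,X_2)+\tfrac{2\epsilon}{3}=\dist_\T(X_1,X_2)-\tfrac{\epsilon}{3}$. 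This is a contradiction, so $\dist_C\ne\dist_\T$ on $\T(S)$.
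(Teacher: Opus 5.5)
Your argument is correct and is essentially the paper's proof: pick $X_1,X_2$ with $\dist_C(X_1,X_2)<\dist_\T(X_1,X_2)$, pass to a bounded open $W\ni X_1,X_2$ on which $\dist_C^W(X_1,X_2)$ is within $\delta$ of $\dist_C(X_1,X_2)$, and then apply Theorem \ref{theorem: embedding Teichmuller space} together with the Schwarz lemma for $H$. The approximation step you single out as the main obstacle is exactly Lemma \ref{lemma: basic lemma 3}, which the paper proves with Banach limits (valid in any dimension); your Montel/diagonal-subsequence argument is a valid finite-dimensional substitute once you make the normalization $f_R(X_1)=0$ explicit, and it needs only finite-dimensionality, not boundedness of the Bers image.
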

\begin{proof} By assumption, we have $\dist_C\neq \dist_\T$ on $\T_{0,n}^0$. So there exists $X_1,X_2\in\T_{0,n}^0$ so that $\dist_C(X_1,X_2)<\dist_\T(X_1,X_2)$. Let $\delta>0$ be such that 
 $$
 \dist_C(X_1,X_2) < \dist_\T(X_1,X_2)-2\delta.
 $$ 
By Lemma \ref{lemma: basic lemma 3} we can chose a bounded open subset $W\subset\T_{0,n}^0$ containing 
$X_1,X_2$ such that 
$$
\dist_C^W(X_1,X_2)<\dist_C(X_1,X_2)+\delta.
$$ 
Here $\dist_C^W$ denotes the Carath\'eodory metric on the complex manifold $W$. Thus,
 \begin{equation}\label{eq2}
 \dist_C^W(X_1,X_2)<\dist_{\T}(X_1,X_2)-\delta.
 \end{equation}
By Theorem \ref{theorem: embedding Teichmuller space}, there exists $H:W\rightarrow \T(S)$ such that 
$$
\dist_{\T}(X_1,X_2)-\delta\le  \dist_{\T}(H(X_1),H(X_2))   .
$$ 
Combining with (\ref{eq2}) we get 
$$
\dist_C^W(X_1,X_2) < \dist_{\T}(H(X_1),H(X_2)).
$$ 
Since $H$ is holomorphic, from the Schwartz lemma de derive the inequality
$$
\dist_C(H(X_1),H(X_2))\leq \dist_C^W(X_1,X_2),
$$ 
which yields the inequality
$$
\dist_C(H(X_1),H(X_2)) <\dist_{\T}(H(X_1),H(X_2)). 
$$
So $\dist_C\neq \dist_\T$ on $\T(X)$, and we are done.
\end{proof}

\subsection{Model Teichm\"uller spaces} Theorem \ref{theorem: embedding Teichmuller space} suggests that we need to search for Teichm\"uller spaces  $\T^0_{0,n}$ on which $\dist_C\neq \dist_\T$. We think of such $\T^0_{0,n}$ as model spaces which can be ''almost" holomorphically embedded into $\T(S)$ if   $S$ is $\Sigma_{0,n}$-accommodating.  This is then used to show that  $\dist_C\neq \dist_\T$ on $\T(S)$. Such a model surface was found in \cite{Markovic}:
\begin{theorem}\label{theorem: 5-punctured sphere case} $\dist_C\neq \dist_\T$ on $\T^0_{0,5}$.
\end{theorem}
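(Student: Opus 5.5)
The plan is to exhibit one explicit Teichm\"uller disc in $\T^0_{0,5}$ along which the two metrics differ, exploiting the two-cylinder structure that only becomes available in dimension two. First I would reduce the problem to a retraction problem. Since $\dist_K=\dist_\T$, every Teichm\"uller disc $\Phi_q:\D\to\T^0_{0,5}$ generated by a quadratic differential $q$ is a complex geodesic for $\dist_\T$. If $\dist_C=\dist_\T$, I would show that $\dist_C$ restricted to $\Phi_q(\D)$ equals the hyperbolic metric. Then a normal-families argument would produce a holomorphic map $f:\T^0_{0,5}\to\D$ with $f\circ\Phi_q=\id$. Here I would take extremal functions for pairs of points on the disc and extract a limit, using the exhaustion statement of Lemma \ref{lemma: basic lemma 3} where convenient. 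So it suffices to find one $q$ for which no such holomorphic retraction exists.

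Next comes the choice of $q$. I would take $q$ to be an ``L-shaped pillowcase'' differential on the five-punctured sphere, that is, a Jenkins--Strebel differential whose horizontal trajectories decompose the sphere into two cylinders $C_1,C_2$ of moduli $m_1,m_2$, not both degenerate in the same way. I would also arrange that the vertical direction is Jenkins--Strebel with its own cylinder decomposition. Stretching $C_1$ and $C_2$ independently by affine maps gives a holomorphic map $G:\Ha\times\Ha\to\T^0_{0,5}$, built from Beltrami coefficients $\frac{\zeta_j-i}{\zeta_j+i}\,\bar q/|q|$ on $C_j$. Its restriction to the diagonal is the Teichm\"uller disc $\Phi_q$ (after identifying $\Ha$ with $\D$). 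Composing gives $F=f\circ G:\Ha\times\Ha\to\D$ with $F(\zeta,\zeta)=\zeta$, suitably normalized. The same construction applied to the vertical foliation gives a second such family through the same disc.

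The core of the argument, and the step I expect to be the main obstacle, is to show that these constraints are incompatible. A holomorphic $F:\Ha^2\to\D$ which is the identity on the diagonal is forced, by the Schwarz lemma and the classification of such ``diagonal retractions'' of the bidisc, to be essentially a convex combination $F(\zeta_1,\zeta_2)\approx a\zeta_1+(1-a)\zeta_2$ at first order along the diagonal. The weight $a$ would be determined by the infinitesimal behavior, namely the derivative $df$ at the base point paired with the Beltrami differentials supported on $C_1$ and $C_2$. This gives two linear constraints: $\int_{C_1}|q|$-type weights from the horizontal family, and analogous weights from the vertical family. I would then show that for the L-shape these constraints contradict each other. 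The reason is that $df$ is a single functional, represented by an integrable holomorphic quadratic differential $\phi$ on the five-punctured sphere. The horizontal retraction forces $\phi$ to have prescribed masses on $C_1,C_2$, and the vertical one forces prescribed masses on the vertical cylinders. For the L-shaped geometry one checks, via an explicit computation in the two-dimensional space of quadratic differentials, that no $\phi$ with $\|\phi\|=1$ and $\Real\int\phi\,\bar q/|q|=\|q\|$ can satisfy both sets of conditions unless $\phi=q$. I would then show $\phi=q$ is itself excluded, because the pointwise Schwarz inequalities would have to be equalities on each cylinder.

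If the first-order argument turns out not to be sharp enough, I would push to second order along the diagonal. I would use the fact that any holomorphic self-map of the bidisc fixing the diagonal pointwise has rigid Taylor coefficients, and match these against the second variation of the Teichm\"uller metric along $G$. The final contradiction yields the nonexistence of $f$, and hence $\dist_C\ne\dist_\T$ on $\T^0_{0,5}$.
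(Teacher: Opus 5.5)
Your reduction to a holomorphic retraction $f$ with $f\circ\Phi_q=\id$, and your polyplane $G$ built by stretching the two horizontal cylinders independently, match the framework the paper takes from \cite{Markovic}. The gap is in the step you call the core: first-order information along the diagonal cannot give a contradiction.

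At $X=\Phi_q(\eta)$ the differential $df_X$ has norm at most one, by the Schwarz lemma and Royden's theorem. It also takes the value one on the extremal vector $|q|/q$. Since the cotangent space is $\QD(X)$ with the $L^1$ norm, these two facts alone force $df_X$ to be pairing with $q/\|q\|_1$ (this is Lemma~\ref{lemma-Hol}). So $\phi=q$ is not a residual case to be excluded; it is forced. Every constraint you list is automatically compatible with it. The horizontal polyplane gives weights $\alpha_j=\int_{C_j}|q|/\|q\|_1$. The vertical polyplane has the same disc on its diagonal, since $\tau^{-q}(\lambda)=\tau^{q}(-1/\lambda)$, and gives the area fractions of the vertical cylinders. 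Both are computed from the same functional.

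Equality in the Schwarz inequality rules nothing out either. The map $\alpha_1\zeta_1+\alpha_2\zeta_2$ is a genuine holomorphic map $\Ha^2\to\Ha$ that is the identity on the diagonal. In fact Theorem~\ref{thm-Hol} shows the retraction restricted to the polyplane is exactly this map, so there is nothing contradictory inside the polyplane.

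Your second-order fallback relies on a rigidity that does not hold. The map $\lambda_1^{\alpha_1}\lambda_2^{\alpha_2}$ also sends $\Ha^2$ into $\Ha$, is the identity on the diagonal, and has first derivatives $\alpha_j$ at every diagonal point. Yet it is not linear.

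Two global ingredients of the actual proof are missing from your proposal.

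\textbf{Equivariance.} $q$ must be a \emph{rational} Jenkins--Strebel differential, so that the Dehn multi-twist $T$ of Lemma~\ref{lemma-varphi} stabilises the disc. Averaging the retraction over $\langle T\rangle$ with a Banach limit gives $f\circ T=f+t$ (Lemma~\ref{lemma-equiv-10}). This equivariance, combined with the derivative formula at every diagonal point, yields the exact identity $f\circ\Emb(\lambda_1,\lambda_2)=\alpha_1\lambda_1+\alpha_2\lambda_2$ on the whole polyplane. It is precisely what excludes examples like $\lambda_1^{\alpha_1}\lambda_2^{\alpha_2}$.

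\textbf{Boundary degeneration.} The contradiction is found not inside the polyplane but at its boundary, where the cylinder of height $b$ degenerates.
\begin{itemize}
\item On the real slice, $f(S(a,b,q_0))$ is a fixed multiple of $(a+bq_0)\imu$, and by continuity this persists at $b=0$.
\item The smooth path $t\mapsto S(a_0,0,q_0-t)$, which moves one puncture, represents the same point of $\T^0_{0,5}$ as $S(a(t),b(t),q_0)$.
\item The Schwarz--Christoffel asymptotics $r(t)\sim t/(C_1\log t^{-1})$ then give $f$ along this smooth path an expansion with a nonzero $t^2/\log t^{-1}$ term (Lemma~\ref{lemma-sigma2}). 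This is impossible for a holomorphic $f$.
\end{itemize}

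Without the equivariance and the boundary analysis, your argument has no mechanism that can produce a contradiction.
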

The  following lemma shows that ''most" surfaces are  $\Sigma_{0,5}$-accommodating.
\begin{lemma}\label{lemma-1} Suppose that a Riemann surface $S$ admits a pants decomposition with at least three pairs of pants.  Then $S$ is $\Sigma_{0,5}$-accommodating.
\end{lemma}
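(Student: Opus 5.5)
The plan is to build the embedded $\Sigma_{0,5}$ as a union of three pairs of pants from the decomposition, chosen to be glued along two essential curves in a chain. Fix a pants decomposition $\mathcal P$ of $S$ with at least three pairs of pants. Form the dual graph $G$ of $\mathcal P$: one vertex for each pair of pants, and one edge for each decomposition curve that separates two pants (or a self-loop if both sides of the curve lie in the same pants). Since $S$ is connected, $G$ is connected. Because $G$ has at least three vertices, it contains a path $P_1 - P_2 - P_3$ of three distinct vertices, where $P_1$ and $P_3$ are glued to $P_2$ along decomposition curves $\alpha$ and $\beta$ respectively.

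Next I take the subsurface $Y = P_1 \cup_\alpha P_2 \cup_\beta P_3$, using only the two gluings along $\alpha$ and $\beta$. Abstractly this is a sphere with $3+3+3-4 = 5$ holes, since the three pants are glued in a tree pattern along two curves. To get a topological embedding $\Sigma_{0,5}\hookrightarrow S$, I shrink each pair of pants slightly: remove a small open collar of every boundary curve that is not $\alpha$ or $\beta$. This guarantees that the image is embedded, even if, for example, $P_1$ and $P_3$ are also adjacent along some other curve, or some pants has two of its boundary curves identified with each other. The resulting compact subsurface is homeomorphic to $\Sigma_{0,5}$, and its boundary components are parallel to curves of $\mathcal P$, or to punctures, or to boundary components of $S$.

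The remaining step, which I expect to be the main obstacle, is proving that the embedding is essential ($\pi_1$-injective). I would use the standard fact that a compact subsurface $Y\subset S$ is $\pi_1$-injective when no component of $S\setminus Y$ is a disc, that is, when no boundary curve of $Y$ is null-homotopic in $S$. Each boundary curve of $Y$ is one of the following:
\begin{itemize}
\item a pants curve in $\mathcal P$, which is essential by the definition of a pants decomposition;
\item a curve around a puncture of $S$, which is not null-homotopic, since punctured discs are not discs;
\item a curve parallel to an ideal boundary component of $S$.
\end{itemize}
In every case no complementary component is a disc. Then I apply van Kampen, or equivalently an innermost-disc argument: if a loop in $Y$ bounded a disc in $S$, its intersections with $\partial Y$ could be removed by isotopy, which would force a component of $\partial Y$ to bound a disc. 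In the infinite-type case the pants decomposition consists of essential simple closed curves in the same sense, and boundary "pants" may have boundary curves that are punctures or ideal boundaries. The same argument works, since the key input is only that pants curves and cusps are homotopically nontrivial. One also needs to check that $\alpha \ne \beta$ as curves, which holds because they are distinct edges of a path in $G$.
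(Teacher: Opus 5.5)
Your proposal is correct and follows the same approach as the paper: choose three pants $P_1,P_2,P_3$ chained along two cuffs, and take their union as the essentially embedded $\Sigma_{0,5}$. You also supply details that the paper leaves implicit. These are the existence of the chain via the connected dual graph, the shrinking of collars to get an honest embedding, and $\pi_1$-injectivity from the fact that no complementary component is a disc.
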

\begin{proof}
Since $S$ is connected, we can always pick three pairs of pants, say $P_1,P_2,P_3$, so that $P_1$ and $P_2$ are attached to each other along a common cuff $C_1$, and $P_2$ and $P_3$ are attached to each other along a common cuff $C_2$. Then $P_1\cup_{C_1} P_2\cup_{C_2} P_3$ is the desired essentially embedded $\Sigma_{0,5}$.
\end{proof}

The following lemma follows directly from Lemma \ref{lemma-1},  Theorem \ref{theorem: accommodating case}, and 
Theorem \ref{theorem: 5-punctured sphere case}.

\begin{lemma}\label{lemma-1-1} Suppose that a Riemann surface $S$ admits a pants decomposition with at least three pairs of pants.  Then $\dist_C\neq \dist_\T$ on $\T(S)$.
\end{lemma}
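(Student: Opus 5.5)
The argument is a direct chaining of three results already established, so the plan is short. Let $S$ be a Riemann surface admitting a pants decomposition with at least three pairs of pants. First I apply Lemma \ref{lemma-1}: choose three pants $P_1,P_2,P_3$ with $P_1,P_2$ glued along a cuff $C_1$ and $P_2,P_3$ glued along a cuff $C_2$. Their union is an essentially embedded $\Sigma_{0,5}$, so $S$ is $\Sigma_{0,5}$-accommodating.

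Next I invoke Theorem \ref{theorem: 5-punctured sphere case}, which says $\dist_C\neq \dist_\T$ on the model space $\T^0_{0,5}$. This is exactly the hypothesis of Theorem \ref{theorem: accommodating case} with $n=5$. Combined with the accommodation just established, that theorem gives $\dist_C\ne\dist_\T$ on $\T(S)$. Unwinding it, we pick $X_1,X_2\in\T^0_{0,5}$ with a strict inequality $\dist_C(X_1,X_2)<\dist_\T(X_1,X_2)$ and shrink to a bounded open $W$ containing them. We then push forward by the almost-isometric holomorphic map $H:W\to\T(S)$ of Theorem \ref{theorem: embedding Teichmuller space}, and apply the Schwarz lemma for $\dist_C$.

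The only point needing care is that the hypotheses match exactly. The embedding in Lemma \ref{lemma-1} must be essential, i.e. $\pi_1$-injective, and this holds because the cuffs of a pants decomposition are essential, non-peripheral, pairwise non-homotopic curves. One should also note that $P_1\cup P_2\cup P_3$ is a genuine $\Sigma_{0,5}$ even if some cuffs of the $P_i$ are punctures or other cuffs of $S$. I expect no real obstacle here: all the analytic difficulty is contained in Theorem \ref{theorem: embedding Teichmuller space} and Theorem \ref{theorem: 5-punctured sphere case}, which we take as given.
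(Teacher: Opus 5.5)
Your proposal is correct and is exactly the paper's argument: Lemma \ref{lemma-1} makes $S$ $\Sigma_{0,5}$-accommodating, and Theorem \ref{theorem: accommodating case} with $n=5$, whose hypothesis is supplied by Theorem \ref{theorem: 5-punctured sphere case}, then gives $\dist_C\neq\dist_\T$ on $\T(S)$. One small correction to your side remark: the embedded $\Sigma_{0,5}$ should be $P_1\cup_{C_1}P_2\cup_{C_2}P_3$, glued only along $C_1$ and $C_2$ and shrunk slightly, because the literal union in $S$ is not a $\Sigma_{0,5}$ if, say, $P_1$ and $P_3$ share a cuff or some $P_i$ is glued to itself.
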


\subsection{Totally ramified coverings and a  3-punctured disc} 

Lemma \ref{lemma-1-1} implies that $\dist_C\neq \dist_\T$ on all but finitely many  Teichm\"uller spaces. However, we want to show that the same holds on all but seven  Teichm\"uller spaces. We employ a different method to show this.

\begin{definition}
Let $\pi:X\to Y$ be a branched covering between two surfaces $X$ and $Y$. We say that $\pi$ is fully ramified if no branch point has an unramified preimage.  
\end{definition}

In this subsection, we let $X$ and $Y$ denote two Riemann surface of finite topological type, and $\pi:X\to Y$ a finite degree holomorphic covering  branched over a finite (possibly empty) set $Q=\{q_1,\cdots,q_k\}\subset Y$. We have the induced holomorphic map 
$$
\pi^*:\T(Y\setminus Q)\to \T(X).
$$ 
The following is a theorem by Maclachlan–Harvey \cite{M-H}, and Winarski \cite{Winarski}.

\begin{theorem} If $\pi:X\to Y$ is fully ramified cover branched over $Q$, then $\pi^*:\T(Y\setminus Q)\to \T(X)$ is an isometric embedding with respect to the corresponding Teichm\"uller metrics on $\T(Y\setminus Q)$ and  $\T(X)$ respectively.
\end{theorem}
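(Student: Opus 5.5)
The statement to prove is that $\pi^*:\T(Y\setminus Q)\to\T(X)$ is an isometry for the Teichm\"uller metrics. I would prove it through the infinitesimal (Finsler) description: on both spaces the Teichm\"uller metric is the integrated form of the Kobayashi metric. Its infinitesimal norm at a point is the dual of the $L^1$ norm on integrable holomorphic quadratic differentials. So it suffices to show that the differential of $\pi^*$ preserves these norms at every point. Since $\pi^*$ is holomorphic, it cannot increase Kobayashi, hence Teichm\"uller, distance. For the reverse inequality I would exhibit a holomorphic left inverse, or at least a norm-preserving dual map on cotangent spaces.

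First I set up $\pi^*$. A Beltrami differential $\mu$ on $Y\setminus Q$ pulls back to $\tilde\mu=\pi^*\mu$ on $X$, with $\|\tilde\mu\|_\infty=\|\mu\|_\infty$. This gives the map on Teichm\"uller spaces, and it is well defined because of the lifting property of quasiconformal maps through the cover. It immediately yields $\dist_\T(\pi^*Y_1,\pi^*Y_2)\le \dist_\T(Y_1,Y_2)$, since extremal dilatations can only decrease.

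For the reverse inequality I would pass to cotangent spaces. The codifferential of $\pi^*$ is the push-forward (trace) map $\pi_*:Q(X)\to Q(Y\setminus Q)$, given by $\pi_*\varphi(y)=\sum_{\pi(x)=y}\varphi(x)$ computed in local coordinates. The pullback $\pi^*:Q(Y\setminus Q)\to Q(X)$ sends integrable differentials to integrable ones. This is exactly where full ramification enters. A simple pole at some $q\in Q$ pulls back to a point of ramification order $m\ge2$ as a differential with a pole of order $2-2m+\dots$, which is actually holomorphic there. If some preimage of $q$ were unramified, the simple pole would persist on $X$, and $X$ is not punctured there, so it would not lie in $Q(X)$. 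With full ramification, every $\psi\in Q(Y\setminus Q)$ lifts to $\pi^*\psi\in Q(X)$, and $\|\pi^*\psi\|_1=d\,\|\psi\|_1$, where $d=\deg\pi$. Moreover $\pi_*\pi^*\psi=d\psi$, and $\langle \pi^*\mu,\pi^*\psi\rangle=d\langle\mu,\psi\rangle$.

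Now take $\mu$ and the Teichm\"uller norm $\sup_{\|\psi\|_1=1}|\langle\mu,\psi\rangle|$. Testing the norm of $\pi^*\mu$ only against lifted differentials gives
$$\|[\pi^*\mu]\|\ge \sup_\psi \frac{|\langle \pi^*\mu,\pi^*\psi\rangle|}{\|\pi^*\psi\|_1}=\|[\mu]\|,$$
so the differential is isometric at every point. The same argument applies at every point, with $Y$ replaced by any point of $\T(Y\setminus Q)$ and the cover lifted accordingly. Integrating along curves then shows that the induced Finsler lengths agree. This yields the inequality $\dist_\T(\pi^*Y_1,\pi^*Y_2)\ge$ the infimum of lengths of paths in the image, but that does not yet control paths in $\T(X)$ leaving the image.

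The main obstacle is the global step. I would handle it directly with Teichm\"uller's theorem. Let $\psi$ be the Teichm\"uller differential of the extremal map between $Y_1$ and $Y_2$. Then $\pi^*\psi$ is integrable by full ramification, and the lifted map is a Teichm\"uller map with initial differential $\pi^*\psi$ and the same dilatation $K$. By Teichm\"uller's uniqueness theorem in finite type, a Teichm\"uller map is extremal in its class. Hence the distance in $\T(X)$ equals $\tfrac12\log K=\dist_\T(Y_1,Y_2)$. Checking that the lift of $\psi$ really lies in $Q(X)$, the point where full ramification is used, is the crux of the proof.
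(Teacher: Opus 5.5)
The paper gives no proof of this theorem; it quotes it from Maclachlan--Harvey and Winarski. So I judge your argument on its own terms.

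Your lifting computations are correct: $\pi^*\psi\in\QD(X)$ by full ramification, $\|\pi^*\psi\|_1=d\|\psi\|_1$, and $\int\pi^*\mu\,\pi^*\psi=d\int\mu\psi$. When $Y$ has finite analytic type, your global step is the standard proof.

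The gap is that the theorem is stated for surfaces of finite topological type. The paper applies it only to surfaces with holes. In Claim~\ref{claim-2} and the two claims after it, $Y\setminus Q$ has type $(0,3,1)$, $(0,2,2)$ or $(1,0,1)$, and $X$ also has holes, so both Teichm\"uller spaces are infinite dimensional. There your global step fails at its first sentence. An extremal map from $Y_1$ to $Y_2$ exists by compactness, but it need not be a Teichm\"uller map $k|\psi|/\psi$ with integrable $\psi\in\QD(Y\setminus Q)$. Strebel's frame mapping theorem guarantees this only when the boundary dilatation of the class is strictly smaller than its extremal dilatation, and Strebel's examples give classes where it fails. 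So the crux is the existence of $\psi$, not the integrability of $\pi^*\psi$. On the $X$ side, "Teichm\"uller's uniqueness theorem in finite type" must also be replaced by Reich--Strebel, which does hold for integrable differentials on arbitrary surfaces.

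The repair is your own cotangent computation, applied to an extremal Beltrami coefficient instead of a tangent vector.

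1. After moving the base point to $Y_1$, let $\nu$ be any extremal Beltrami coefficient for the class of $Y_2$.
2. By the Hamilton--Krushkal theorem there are $\psi_n\in\QD(Y_1\setminus Q_1)$ with $\|\psi_n\|_1=1$ and $\mathrm{Re}\int\nu\psi_n\to\|\nu\|_\infty$.
3. By full ramification, $\tilde\psi_n=d^{-1}\pi^*\psi_n$ lies in $\QD(X_1)$, has norm one, and satisfies $\int_{X_1}(\pi^*\nu)\tilde\psi_n=\int\nu\psi_n$.
4. Since $\|\pi^*\nu\|_\infty=\|\nu\|_\infty$, the $\tilde\psi_n$ form a Hamilton sequence for $\pi^*\nu$.
5. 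By the Reich--Strebel sufficiency theorem, $\pi^*\nu$ is extremal in the class of the lifted map. Hence $\dist_\T(\pi^*Y_1,\pi^*Y_2)=\dist_\T(Y_1,Y_2)$, with no existence theorem for Teichm\"uller maps needed.

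Alternatively, your argument works verbatim at Strebel points; since these are dense, continuity of $\pi^*$ and of $\dist_\T$ finishes the proof.

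A minor slip: under $z=w^m$, the differential $z^{-1}dz^2$ becomes $m^2w^{m-2}dw^2$, which has a zero of order $m-2$, not a pole. Your conclusion there is right.
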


The following lemma is an immediate corollary of the previous theorem. 
A Riemann surface $X$ is said to be of type $(g,n,b)$ if it arises by removing $n$ punctures, and $b$ discs, from a closed Riemann surface of genus $g$. The  identity $\T(X)\equiv \T^b_{g,n}$ holds for any Riemann surface $X$ of type $(g,n,b)$.

\begin{lemma}\label{lemma-22} If $\pi:X\to Y$ is a fully ramified  cover, branched over a finite set $Q\subset Y$, and if $\dist_C\ne \dist_\T$ on $\T(Y\setminus Q)$, then $\dist_C\ne \dist_\T$ on $\T^b_{g,n}$ if $X$ is of type $(g,n,b)$.
\end{lemma}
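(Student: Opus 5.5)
The plan is to use the isometric embedding $\pi^*:\T(Y\setminus Q)\to \T(X)$ from the Maclachlan--Harvey--Winarski theorem, together with the contraction property of the Carath\'eodory metric under holomorphic maps. Since $\T(X)\equiv\T^b_{g,n}$ for $X$ of type $(g,n,b)$, it is enough to find two points of $\T(X)$ at which $\dist_C<\dist_\T$.

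First, by hypothesis there are points $Z_1,Z_2\in\T(Y\setminus Q)$ with
$$
\dist_C(Z_1,Z_2)<\dist_\T(Z_1,Z_2),
$$
where both metrics are taken on $\T(Y\setminus Q)$. Second, $\pi^*$ is holomorphic, and the Carath\'eodory pseudometric is non-increasing under holomorphic maps; this is the same Schwarz lemma argument used in the proof of Theorem \ref{theorem: accommodating case}. Hence
$$
\dist_C(\pi^*(Z_1),\pi^*(Z_2))\le \dist_C(Z_1,Z_2).
$$
Third, since $\pi$ is fully ramified, the theorem of Maclachlan--Harvey and Winarski gives
$$
\dist_\T(\pi^*(Z_1),\pi^*(Z_2))=\dist_\T(Z_1,Z_2).
$$
Chaining these three relations yields
$$
\dist_C(\pi^*(Z_1),\pi^*(Z_2))<\dist_\T(\pi^*(Z_1),\pi^*(Z_2))
$$
in $\T(X)\equiv\T^b_{g,n}$, which is the claim.

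There is no serious obstacle here. The only point to check is that the quoted theorem applies as stated, in particular when $X$ has boundary discs ($b>0$), where the relevant object is the (infinite-dimensional) Teichm\"uller space of a bordered surface. Since the theorem is stated for the finite-type surfaces considered in this subsection, I would invoke it as given.
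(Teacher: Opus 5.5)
Your proof is correct and is the argument the paper intends when it calls this lemma an immediate corollary of the Maclachlan--Harvey--Winarski theorem. You pull a pair of points with $\dist_C<\dist_\T$ forward by the holomorphic map $\pi^*$, which does not increase $\dist_C$ and, being an isometric embedding, preserves $\dist_\T$. Passing from ``$\ne$'' to ``$<$'' is justified because $\dist_C\le\dist_K=\dist_\T$ always holds.
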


The remaining cases of Theorem \ref{theorem: main theorem} will be proved by combining Lemma \ref{lemma-22} with the following theorem.

\begin{theorem}\label{theorem: 3-punctured disc case}
$\dist_C\neq \dist_\T$ on $\T^1_{0,3}$.
\end{theorem}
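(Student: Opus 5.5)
We will prove the statement by reducing it to Theorem \ref{theorem: 5-punctured sphere case}, i.e.\ to the fact that $\dist_C\neq \dist_\T$ on $\T^0_{0,5}$. A 3-punctured disc $D$ (type $(0,3,1)$) has a pants decomposition consisting of only two pairs of pants, so Lemma \ref{lemma-1-1} does not apply directly. However, $D$ is an infinite-dimensional Teichm\"uller space, because its ideal boundary is a circle. The plan is to show that $\T^1_{0,3}$ nevertheless "almost" contains $\T^0_{0,5}$ holomorphically, in the sense of Theorem \ref{theorem: embedding Teichmuller space}. The argument of Theorem \ref{theorem: accommodating case} then goes through unchanged.

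\textbf{Step 1: a model and a family of maps.} Fix $X_1,X_2\in \T^0_{0,5}$ with $\dist_C(X_1,X_2)<\dist_\T(X_1,X_2)-2\delta$. By Lemma \ref{lemma: basic lemma 3}, choose a bounded open $W\ni X_1,X_2$ with $\dist_C^W(X_1,X_2)<\dist_C(X_1,X_2)+\delta$.

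Represent points of $W$ as the Riemann sphere with punctures at $0,1,\infty$ and at two further points $a,b$. Put $a=\epsilon$ and $b=\lambda\epsilon$, with $\lambda$ varying holomorphically, and do the same for the other moduli, so that both extra punctures collapse into a small disc $U_\epsilon$ around $0$. Removing a small round disc centred away from all punctures gives a holomorphic family of 3-punctured discs. The real point is to build a holomorphic map $H:W\to \T^1_{0,3}$. I would do this by the Beltrami-differential construction used for Theorem \ref{theorem: embedding Teichmuller space}, as follows.

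\begin{enumerate}
\item Take the Teichm\"uller (or harmonic) Beltrami coefficients $\mu_X$ on a fixed base 5-punctured sphere; these depend holomorphically on $X\in W$.
\item Transport them, via a fixed quasiconformal identification, to a subsurface of the 3-punctured disc that is essentially a sphere with five holes. Here the ideal boundary circle of the disc plays the role of one puncture region, and a fourth cuff comes from pinching.
\item Extend by zero elsewhere.
\end{enumerate}

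\textbf{Step 2: the distance estimate.} This step is the main obstacle. We need
$$\dist_\T(X_1,X_2)\le \dist_\T(H(X_1),H(X_2))+\delta,$$
that is, extremal maps on the bordered surface must not be able to exploit the free boundary circle to reduce dilatation. I would argue by Reich--Strebel, i.e.\ the main inequality. We test against integrable quadratic differentials on the 3-punctured disc that approximate the extremal quadratic differential $\varphi$ of the pair $(X_1,X_2)$ on the 5-punctured sphere.

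Concretely, take a very large conformal annulus (long collar) separating the ideal boundary from the rest, and treat the boundary region as a degenerate puncture. Pulling back $\varphi$ and cutting off in the long collar costs only $O(1/\text{modulus})$ in $L^1$ norm. This is exactly the mechanism that yields the additive loss $\delta$ in Theorem \ref{theorem: embedding Teichmuller space}.

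The difficulty is that $\Sigma_{0,5}$ does not embed essentially in a 3-punctured disc, since that surface has only two pants. So the collar trick must be replaced by something genuinely using the boundary. My first attempt would be a double-cover trick instead. The 3-punctured disc is double covered, branched at one interior point, by a surface that is $\Sigma_{0,5}$-accommodating, and the $\mathbb{Z}/2$-symmetric part of $\T^0_{0,5}$ is isometric to a sub-Teichm\"uller space. One then chooses $X_1,X_2$ in the symmetric locus, which requires checking that the non-equality of \cite{Markovic} can be realised by symmetric points.

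\textbf{Step 3: conclusion.} Once $H$ and the inequality are in hand, repeat verbatim the chain of inequalities in the proof of Theorem \ref{theorem: accommodating case}, using the Schwarz lemma for $\dist_C$, to get $\dist_C(H(X_1),H(X_2))<\dist_\T(H(X_1),H(X_2))$ on $\T^1_{0,3}$.
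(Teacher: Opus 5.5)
\textbf{There is a genuine gap: neither of your two mechanisms connects $\T^0_{0,5}$ to $\T^1_{0,3}$.}

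\textbf{Step 1 cannot work.} Your first route needs a five-holed sphere inside the 3-punctured disc $D$, placed so that the argument of Lemma \ref{lemma-fin} applies. That argument relies on the embedding being essential: the limit map $g:X_1\to X_2$ must respect the marking of the 5-punctured sphere. But $D$ is planar, so each complementary component of a subsurface $Y\cong\Sigma_{0,5}$ is attached along a single cuff. There are five cuffs and only four ends of $D$ (three punctures and the ideal boundary circle), so some cuff bounds a disc. The ideal boundary is one end and cannot supply two cuffs, and no ``pinching'' is available inside a holomorphic construction. Once a cuff bounds a disc, the map $\iota^*$ factors through the Teichm\"uller space with that hole filled in. The corresponding puncture's position is then forgotten, so no bound $\dist_\T(X_1,X_2)\le \dist_\T(H(X_1),H(X_2))+\delta$ can follow. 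Your concrete family also misses the target: deleting a small disc away from the punctures of a 5-punctured sphere gives a 5-punctured disc, not a 3-punctured one.

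\textbf{The double-cover fallback runs in the wrong direction.} For a fully ramified cover $\pi:\tilde D\to D$ branched over $Q$, the map $\pi^*:\T(D\setminus Q)\to \T(\tilde D)$ is a holomorphic isometric embedding. The Schwarz lemma then gives only $\dist_C(\pi^*p,\pi^*q)\le \dist_C(p,q)$, with the left side computed in $\T(\tilde D)$ and the right side in $\T(D\setminus Q)$. So non-equality passes from the base to the cover. That is Lemma \ref{lemma-22}, and it is how the paper goes from $\T^1_{0,3}$ to $\T^2_{0,2}$, $\T^1_{1,0}$, $\T^1_{1,1}$. Knowing $\dist_C<\dist_\T$ on the $\Sigma_{0,5}$-accommodating cover says nothing about the base unless you have a holomorphic retraction of $\T(\tilde D)$ onto the symmetric locus, which you do not. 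Moreover, the base for a cover branched at one interior point $q$ is $\T(D\setminus\{q\})=\T^1_{0,4}$, which Lemma \ref{lemma-1-1} already handles, not $\T^1_{0,3}$.

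\textbf{What is missing is an argument carried out on $\T^1_{0,3}$ itself.} The paper reruns Markovic's method there:
\begin{enumerate}
\item It uses the $L$-shaped 3-punctured disc with the rational Jenkins--Strebel differential $\psi_0$.
\item Banach-limit averaging over the multi-twist, together with Theorem \ref{thm-Hol} (with the appendix replacing finite-dimensional duality), makes $\Fol$ linear on the polyplane $\Emb_0(\Ha^2)$.
\item The ideal boundary is then used essentially. The smooth path $\sigma_1(t)=S(a_0,0,q_0-t)$ and the polyplane path $\sigma_2(t)$ are the same unmarked surface, with boundary markings differing by $g_t(x)=x+O(r(t))$.
\item Hence $|\Fol(\sigma_1(t))-\Fol(\sigma_2(t))|=O(t^2/\log^2 t^{-1})$. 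This is incompatible with the nonzero $t^2/\log t^{-1}$ term of $\Fol\circ\sigma_2$, given that $\sigma_1$ is smooth.
\end{enumerate}
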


\subsection{Outline } The proof of our main result largely rests on Theorem \ref{theorem: embedding Teichmuller space} and Theorem \ref{theorem: 3-punctured disc case}. The argument behind the proof of Theorem \ref{theorem: embedding Teichmuller space}  is based on a novel construction in Teichm\"uller theory, and the first part of the paper is devoted to proving it
(Section \ref{section-thick} through Section \ref{section-posl}). The required holomorphic map $H:W\to \T(S)$ is constructed as a composition of  holomorphic maps $I:\T^n_{0,0}\to \T(S_0)$ and $J_\epsilon:W\to \T^n_{0,0}$ (the map $J_\epsilon$ is defined for $\epsilon>0$). We let $H_\epsilon=I\circ J_\epsilon$, and then show that  when  $\epsilon$ is sufficiently small we can take $H=H_\epsilon$. 

The map $I:\T^n_{0,0}\to \T(S_0)$  has the property that  for each $Y\in \T^n_{0,0}$ there exists a conformal embedding $\iota:Y \to S=I(Y)$ which is compatible with the marking $\Sigma_{0,n}\to Y$. To construct $I$ we first find $Y_0\in \T^n_{0,0}$ which allows a holomorphic embedding   $\iota:Y_0\to S_0$ realising the topological embedding $\iota:\Sigma_{0,n}\to S_0$ from the definition of $\Sigma_{0,n}$-accommodating surfaces. In turn, this allows us to construct the holomorphic map 
$\iota_*:\BD_1(Y_0)\to \BD_1(S_0)$ which ''embeds" the Beltrami differentials on $Y_0$  into the Beltrami differentials on $S_0$. The map $I$ is defined as a quotient of the map $\iota_*$, where we identify $\T^n_{0,0}$ with $\BD_1(Y_0)/\sim$, and $\T(S_0)$ with $\BD_1(S_0)/\sim$ (here $\sim$ is the Teichm\"uller equivalence relation among Beltrami differentials).

The second map is  $J_\epsilon:W\to \T^n_{0,0}$ has the following property. Suppose   $J_\epsilon(X)=Y\in \T^n_{0,0}$  for  some $X\in W$.  Then $Y$  is biholomorphic to  a subsurface of $X$, and the peripheral geodesics on $Y$ are shorter than $\epsilon$ (the geodesics are taken with respect to the complete hyperbolic metric on $Y$).

The second part is aimed at proving Theorem \ref{theorem: 3-punctured disc case}. As part of our analysis we observe that the theory developed in \cite{Markovic} generalises from analytically finite to topologically finite Riemann surfaces. Some additional justification is needed (see the appendix). We then show that $d_C\ne d_\T$ on the Teichm\"uller disc arising from a certain $L$-shaped 3-punctured disc. Namely,  the $L$-shaped 3-punctured disc $S(a,b,q)$ is the union of two copies of the $L$-shaped polygon 
$L(a,b,q)$ with all the edges identified except the edge $\overline{P_5P_1}$ (see Figure \ref{fig:Ls1} below).
The (2,0) form $dz^2$ on $L(a,b,q)$ gives rise to the quadratic differential $\psi(a,b,q)$ on $S(a,b,q)$. We show that $d_C\neq d_\T$ on the Teichm\"uller disc $\tau^{\psi(a,b,q)}$ generated by the quadratic differential $\psi(a,b,q)$.

Fix a base point $S_0=S(a_0,b_0,q_0)$. A holomorphic map $\Emb_0:\Ha^2\rightarrow\T(S_0)$ from the polyplane $\Ha^2$  to $\T(S_0)$ is constructed so that its restriction to the diagonal of $\Ha^2$ is equal to the Teichm\"uller disc $\tau^{\psi(a_0,b_0,q_0)}$. Assuming $d_C=d_\T$ on  $\tau^{\psi(a_0,b_0,q_0)}$ we construct  a holomorphic map $\Fol:\T(S_0)\rightarrow\Ha$ which restricts to the identity on the Teichm\"uller disc $\tau^{\psi(a_0,b_0,q_0)}$. Moreover, 
\begin{equation}\label{eq-proba}
(\Fol\circ\Emb_0)(\lambda_1,\lambda_2)=\alpha_1\lambda_1+\alpha_2\lambda_2,
\end{equation}
where $\lambda_1,\lambda_2\in \Ha$, and for suitable constants $\alpha_1,\alpha_2$.

A contradiction is  derived by showing that the holomorphic map $\Fol$ is not smooth at a point in $\T(S_0)$ which belongs to the boundary of the polyplane $\Emb_0(\Ha^2)\subset \T(S_0)$.
Namely, we consider  the smooth path $\sigma_1:[0,q_0)\to \T(S_0)$ given by $\sigma_1(t)=S(a,0,q_0-t)$, and show that $\Fol\circ \sigma_1$ is not smooth at $0$. The new moment here is that the path $\sigma_1$ does not belong to the polyplane   $\Emb_0(\Ha^2)$, so we can not use (\ref{eq-proba}) to show that  $\Fol\circ \sigma_1$ is not smooth at $0$. 

To remedy this issue we construct another path 
$\sigma_2:[0,q_0)\to \T(S_0)$ which does live in $\Emb_0(\Ha^2)$, and show that 
$$
(\Fol\circ \sigma_1)(t)-(\Fol\circ \sigma_2)(t)=O\left(\frac{t^2}{\log^2 t^{-1}}\right),
$$
which is enough to prove that  $\Fol\circ \sigma_1$ is not smooth at $0$.
\begin{remark} In the case when $S(a,b,q)$ is a 5-punctured sphere in $\T^0_{0,5}$, the path $\sigma_1$ is contained in the polyplane so it is not needed to construct the second path $\sigma_2$. This is the only substantial difference between the two cases. The only other difference concerns the computation of the derivative of $\Fol$. Since in our case $\T(S_0)$ is of infinite dimension more care is needed. This is done in the appendix.
\end{remark}

\subsection{Organization} In Section \ref{section-cara} we state some basic definitions and prove some basic claims about  the Carath\'eodory and Kobayashi pseudometrics.
In Section \ref{section-prvidokaz} we prove Theorem \ref{theorem: main theorem}. In
Section \ref{section-thick} through to Section \ref{section-posl}  we prove Theorem \ref{theorem: embedding Teichmuller space}. After this, the remainder of the paper is devoted to proving  
Theorem \ref{theorem: 3-punctured disc case}.

\section{The Carath\'eodory and Kobayashi pseudometrics}\label{section-cara}
The Carath\'eodory pseudometric $\dist_C$ is defined as:
\begin{equation}\label{eq: Caratheodory metric}
 \dist_C(p,q)=\sup\left\{d_\Ha(f(p),f(q)):f:X\rightarrow\Ha, f\text{ is holomorphic}\right\}.
\end{equation}
Here $\Ha$ denotes the upper half plane, and $d_\Ha$ the hyperbolic metric on $\Ha$. The Kobayashi pseudometric $\dist_K$ is defined as the largest possible pseudo metric on $X$ such that
\begin{equation}\label{eq: Kobayashi metric 1}
 \dist_K(p,q)\leq d_\Ha(\imu,z)
 \end{equation}
 for all holomorphic maps $f:\Ha\rightarrow X$ where $f(\imu)=p$, and $f(z)=q$ (here  $\imu\in\Ha$  denotes the imaginary unit).
\begin{remark} The Schwarz lemma implies $\dist_C(p,q)\leq \dist_K(p,q)$ for all pairs $p,q\in X$. 
\end{remark}

Next,  we prove a simple but general lemma asserting that in the definition of $\dist_C$ the supremum in (\ref{eq: Caratheodory metric}) is attained.
\begin{lemma}\label{lemma: basic lemma 1} For any $p,q\in X$, there exists a holomorphic map $\Phi:X\rightarrow\Ha$ such that $\dist_C(p,q)=d_{\Ha}(\Phi(p),\Phi(q))$. 
\end{lemma}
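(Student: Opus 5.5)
We argue by normal families (Montel). If $\dist_C(p,q)=0$ the statement is trivial: take $\Phi$ constant. Otherwise choose a maximizing sequence of holomorphic maps $f_k:X\to\Ha$ with $d_\Ha(f_k(p),f_k(q))\to \dist_C(p,q)$.

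First normalize by automorphisms of $\Ha$. Post-composing $f_k$ with an element $A_k\in\operatorname{Aut}(\Ha)$ does not change $d_\Ha(f_k(p),f_k(q))$, so we may assume $f_k(p)=\imu$ for all $k$.

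Next we pass to the disc and extract a pointwise limit. Compose with the Cayley map $\Ha\to\D$, so each $f_k$ becomes $g_k:X\to\D$ with $g_k(p)=0$. For every $x\in X$ the values $g_k(x)$ lie in the closed unit disc, which is compact. We want a subsequence converging at every point of $X$. Since $X$ may be infinite-dimensional (a Banach manifold), we avoid relying on local compactness of $X$.

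The device is a Tychonoff/ultrafilter limit. Take a non-principal ultrafilter $\mathcal U$ on $\N$ and define $g(x)=\lim_{\mathcal U} g_k(x)\in\overline{\D}$, which exists for every $x$. Then check holomorphicity of $g$. The $g_k$ are uniformly bounded by $1$, so by Cauchy estimates they are locally equi-Lipschitz on $X$: on a ball $B(x_0,2r)$ in a chart we get $\|Dg_k\|\le 1/r$. Hence $g$ is locally Lipschitz, in particular continuous. Its restriction to every complex affine line in a chart is a pointwise limit of uniformly bounded holomorphic functions of one variable, and equicontinuity upgrades pointwise convergence to locally uniform convergence there, so this restriction is holomorphic. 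A continuous (locally bounded) map that is Gâteaux holomorphic is holomorphic on a Banach space. Alternatively one can take an honest subsequence converging at the two points $p,q$ and use the ultrafilter only to define $g$ globally; the values at $p,q$ are what matter.

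Finally we rule out degeneration to the boundary. We have $g(p)=0$, so by the maximum principle (applied on complex lines through $p$, using connectedness of $X$) either $g$ maps into $\D$ or $g$ is a unimodular constant; the latter is impossible since $g(p)=0$. So $g:X\to\D$ is holomorphic. Moreover $|g_k(q)|=\tanh(d_\Ha(f_k(p),f_k(q))/2)\to\tanh(\dist_C(p,q)/2)<1$, and the limit along $\mathcal U$ agrees, so $d_\D(g(p),g(q))=\dist_C(p,q)$. Setting $\Phi$ to be the inverse Cayley map composed with $g$ gives the result.

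The main obstacle is the infinite-dimensional compactness step: Montel's theorem is not available directly. It is circumvented by the pointwise ultrafilter limit combined with the local equicontinuity coming from the Cauchy estimates.
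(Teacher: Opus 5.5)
Your proof is correct, and it is essentially the paper's argument. The paper also takes a maximizing sequence normalized at $p$ and applies, pointwise, a norm-one linear functional on $l^\infty(\N)$ that extends ordinary limits (a Banach limit $L$); your $\lim_{\mathcal U}$ is just another functional of this kind, and shift-invariance plays no role here.

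The only differences are in the verification:
\begin{itemize}
\item The paper gets holomorphy at once. It views $x\mapsto (A\circ\Phi_n(x))_n$ as a holomorphic map into the unit ball of $l^\infty(\N)$ and composes it with the continuous linear $L$. You check holomorphy by hand, via Cauchy estimates, uniform convergence along $\mathcal U$ on complex lines, and G\^ateaux-holomorphy plus continuity. That shortcut is available to you as well.
\item You make explicit the maximum-principle step that puts the image in the open disc, which the paper leaves implicit. More precisely, it is an open--closed argument on $\{|g|=1\}$ using connectedness of $X$.
\item You only need $|g_k(q)|$ to converge, whereas the paper needs $\Phi_n(q)\to\lambda$.
\end{itemize}
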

The main ingredient in the proof of Lemma \ref{lemma: basic lemma 1} is the notion of Banach limits which we now recall. Let $l^\infty(\N)$ be the Banach space of all bounded sequences of complex numbers with the supremum norm. Recall that as a simple consequence of the Hahn-Banach theorem, there exists a bounded linear functional $L:l^\infty(\N)\rightarrow\C$ with operator norm 1, with the properties:
\begin{enumerate}
\item $L((a_n)_{n\in\N})=a$ if $a_n\rightarrow a$, 
\vskip .1cm
\item $L((a_n)_{n\in\N})=L((a_{n+1})_{n\in\N})$.
\end{enumerate}
Such $L$ is called a Banach limit.
\begin{proof}    By definition there exists a sequence $\Phi_n:X\rightarrow\Ha$ such that 
\begin{itemize}
\item $\Phi_n(p)=\imu$,   
\vskip .3cm
\item $\lim_{n\to \infty} \Phi_n(q)=\lambda$, for some $\lambda\in \Ha$ such that $d_\Ha(\imu,\lambda)=\dist_C(p,q)$. 
\end{itemize}
Let $A$ be a M\"obius transform which takes $\Ha$ to $\D$, and such that $A(\imu)=0$. Then $(A\circ\Phi_n)_{n\in\N}$ is a holomorphic map from $X$ to the unit ball in $l^\infty(\N)$. Therefore $\Psi:=L\circ(A\circ\Phi_n)_{n\in\N}$ is a holomorphic map from $X$ to $\D$. It satisfies $\Psi(p)=0,\ \Psi(q)=A(\lambda)$. Then $\Phi:=A^{-1}\circ\Psi$ will satisfy the required property.
\end{proof}

If $W$ is an open subset of $X$, then $W$ is a complex manifold and it carries its own Carth\'eodory pseudometric. In this situation we use $d^W_C$ to denote the Carth\'eodory pseudometric on $W$ so we can distinguish it from the Carth\'eodory pseudometric $\dist_C$ on $X$. Note that  $\dist_C(p,q)\le \dist^W_C(p,q)$ when 
$p,q\in W$. By exactly the same argument as in the proof of the previous lemma, we get:
\begin{lemma}\label{lemma: basic lemma 3} Let $W_n\subset X$, $n\in \N$, be an increasing sequence of open sets whose union is equal to $X$. Then for every $p,q\in X$, we have \[\dist_C(p,q)=\lim_{n\rightarrow\infty}\dist_C^{W_n}(p,q).\]
\end{lemma}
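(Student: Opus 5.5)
Since $W_n\subset W_{n+1}\subset X$, the restriction of any holomorphic map $W_{n+1}\to\Ha$ is a holomorphic map $W_n\to\Ha$. Hence the sequence $\dist_C^{W_n}(p,q)$ is non-increasing in $n$ (once $n$ is large enough that $p,q\in W_n$). It is also bounded below by $\dist_C(p,q)$, by the remark preceding the lemma. So the limit $\ell:=\lim_n\dist_C^{W_n}(p,q)$ exists and satisfies $\ell\ge\dist_C(p,q)$. It remains to prove $\ell\le\dist_C(p,q)$, and for this I will build a single holomorphic map $\Phi:X\to\Ha$ with $d_\Ha(\Phi(p),\Phi(q))\ge\ell$, following the Banach-limit argument in the proof of Lemma \ref{lemma: basic lemma 1}.

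\emph{Choice of maps.} Fix $n_0$ with $p,q\in W_{n_0}$. For each $n\ge n_0$, Lemma \ref{lemma: basic lemma 1} applied to the manifold $W_n$ gives a holomorphic $\Phi_n:W_n\to\Ha$ with $d_\Ha(\Phi_n(p),\Phi_n(q))=\dist_C^{W_n}(p,q)$. (Alternatively, a near-extremal map would suffice.) Post-composing with a M\"obius transformation, we may assume $\Phi_n(p)=\imu$. Passing to a subsequence, we may also assume $\Phi_n(q)\to\lambda$ with $d_\Ha(\imu,\lambda)=\ell$; this uses compactness of the hyperbolic circle of radius $\ell$ about $\imu$ together with $\dist_C^{W_n}(p,q)\to\ell$. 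Let $A:\Ha\to\D$ be a M\"obius map with $A(\imu)=0$, and set $f_n=A\circ\Phi_n:W_n\to\D$.

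\emph{The main obstacle: the $f_n$ are not defined on all of $X$.} The fix is to extend each $f_n$ by $0$ off $W_n$, and to note that for every $x\in X$ the sequence $(f_n(x))_n$ is eventually genuinely holomorphic in $x$. Concretely, fix $m$ and define on $W_m$ the map
$$
F_m:W_m\to l^\infty(\N),\qquad F_m(x)=(f_{n}(x))_{n\ge m},
$$
where we reindex so that the sequence starts at $n=m$. Each coordinate is holomorphic on $W_m$ and all coordinates are bounded by $1$. Hence $F_m$ is holomorphic as a map into $l^\infty$: a locally uniformly bounded family of holomorphic coordinate functions gives a holomorphic map into $l^\infty$, by the Cauchy estimates. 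Define $\Psi(x)=L(F_m(x))$ for $x\in W_m$.

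By shift-invariance of the Banach limit $L$, dropping finitely many initial terms does not change the value. So $\Psi$ is well defined independently of $m$, and it is holomorphic on each $W_m$, hence on $X=\bigcup W_m$. Since $L$ has norm $1$ and each $f_n$ maps into $\D$, we get $|\Psi|\le1$. Moreover $\Psi(p)=0$, and $\Psi(q)=L((f_n(q)))=A(\lambda)$ by property (1) of $L$. Since $|A(\lambda)|<1$, the maximum principle shows $\Psi$ is not a unimodular constant, so $\Psi(X)\subset\D$. Setting $\Phi=A^{-1}\circ\Psi$ gives $d_\Ha(\Phi(p),\Phi(q))=d_\Ha(\imu,\lambda)=\ell$. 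Therefore $\dist_C(p,q)\ge\ell$, which completes the proof.
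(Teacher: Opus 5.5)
Your proposal is correct and follows the paper's route. The paper proves this lemma only by saying that the Banach-limit argument of Lemma \ref{lemma: basic lemma 1} carries over, and you have supplied the details it omits: monotonicity of $\dist_C^{W_n}(p,q)$ in $n$; the tails $(f_n(x))_{n\ge m}$ on $W_m$ together with shift-invariance of $L$, which give a globally well-defined holomorphic $\Psi$; and the maximum principle, which puts $\Psi$ in the open disc.

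One small point to add: choose $n_0$ so that $p$ and $q$ lie in the same connected component of $W_{n_0}$. This is possible because a path from $p$ to $q$ in $X$ is compact, and it guarantees $\dist_C^{W_n}(p,q)<\infty$, so that $\ell<\infty$ and the extremal (or near-extremal) maps $\Phi_n$ exist.
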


\subsection{Holomorphic projections} We begin with the definition of holo-isometric discs.
\begin{definition} A  map $f:\Ha\to X$ is a holo-isometric disc if it is a  holomorphic isometry with respect to the hyperbolic metric on $\Ha$, and the Kobayashi metric on $X$, respectively. 
\end{definition}

\begin{remark}
Teichm\"uller spaces have ample holo-isometric  discs (examples being Teichm\"uller discs). In fact, every two points in a given Teichm\"uller space lie on the image of a holo-isometric disc.
\end{remark}

\begin{definition} Let $\Phi:X\rightarrow\Ha$ denote a holomorphic map. A  map $f:\Ha\to X$ is said to be  an extremal disc for $\Phi$ is $\Phi\circ f:\Ha\to \Ha$ is a conformal automorphism.
\end{definition}

\begin{lemma}\label{lemma-ext} Suppose $p_1,p_2\in X$ lie in the image of a holo-isometric disc  $f:\Ha\to X$.  If 
$\Phi:X\rightarrow\Ha$ is a holomorphic map such that $d_\T(p_1,p_2)=d_{\Ha}(\Phi(p_1),\Phi(p_2))$, then 
$f:\Ha\to X$ is an extremal disc for $\Phi$.
\end{lemma}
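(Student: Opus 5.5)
Let $z_j\in\Ha$ be the points with $f(z_j)=p_j$, and set $g=\Phi\circ f:\Ha\to\Ha$, a holomorphic self-map of the upper half plane. The goal is to show that $g$ is a conformal automorphism, and the tool is the equality case of the Schwarz--Pick lemma. (Here $d_\T(p_1,p_2)$ should be read as the Kobayashi distance $\dist_K(p_1,p_2)$; on Teichm\"uller spaces the two coincide by Royden's theorem, and the argument uses only that $f$ is an isometry for $\dist_K$.)

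First, because $f$ is a holo-isometric disc, $d_\Ha(z_1,z_2)=\dist_K(f(z_1),f(z_2))=d_\T(p_1,p_2)$. Second, by hypothesis $d_\T(p_1,p_2)=d_\Ha(\Phi(p_1),\Phi(p_2))$, which equals $d_\Ha(g(z_1),g(z_2))$. Together these give
$$
d_\Ha(g(z_1),g(z_2))=d_\Ha(z_1,z_2).
$$

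If $p_1\ne p_2$, then $z_1\ne z_2$, so $g$ preserves the hyperbolic distance between two distinct points. The Schwarz--Pick lemma states that a holomorphic self-map of $\Ha$ is either a strict contraction of $d_\Ha$ at every pair of distinct points, or is a M\"obius automorphism of $\Ha$. Hence $g$ is a conformal automorphism, which by definition means $f$ is an extremal disc for $\Phi$.

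The only delicate point is the degenerate case $p_1=p_2$. There the hypothesis carries no information, and the statement is presumably intended only for distinct points. No real obstacle arises beyond invoking the rigidity part of Schwarz--Pick.
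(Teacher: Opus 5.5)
Your proposal is correct and follows the paper's own proof: the isometry property of $f$ together with the hypothesis gives $d_\Ha\big(\Phi(f(z_1)),\Phi(f(z_2))\big)=d_\Ha(z_1,z_2)$ for $z_1\ne z_2$, and the equality case of the Schwarz--Pick lemma then makes $\Phi\circ f$ a conformal automorphism of $\Ha$. Your remarks are also accurate: $d_\T$ here is really the Kobayashi distance, and the statement needs $p_1\ne p_2$, which the paper also assumes tacitly.
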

\begin{proof} 
Suppose  $p_i=f(z_i)$, $i=1,2$, where  $z_1,z_2\in \Ha$, and $z_1\ne z_2$. Since $f$ is a holo-isometric disc we conclude that $\Phi\circ f:\Ha\to \Ha$ is holomorphic, and that
$$
d_\Ha(z_1,z_2)=d_{\Ha}\big(\Phi(f(z_1)),\Phi(f(z_2))\big).
$$
It follows from  the Schwartz lemma that $\Phi\circ f$ is a conformal automorphism of $\Ha$.
\end{proof}

\section{Proof of Theorem \ref{theorem: main theorem}}\label{section-prvidokaz}

To prove the theorem we need to show that $\dist_C\neq \dist_\T$ on $\T(S)$ for every Riemann surface  $S$ satisfying the following two conditions:
\begin{enumerate}
\item  $\dim(\T(S))\ge 2$, 
\vskip .3cm
\item $S$ is not conformally equivalent to one of the following seven types of Riemann surfaces: a disc, an annulus, a 3-holed sphere, an once punctured disc, a twice punctured disc, an once punctured annulus, an once punctured 3-holed sphere.

\end{enumerate}

The proof of Theorem \ref{theorem: main theorem} is organised by the next two lemmas.

\begin{lemma}\label{lemma-3} If $S$ a Riemann surface satisfying the above conditions (1) and (2), then either $S$ has a pants decomposition with at least two pairs of pants, or $S$ is biholomorphic to a torus minus a disc.
\end{lemma}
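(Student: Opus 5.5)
We split according to whether $S$ is of finite topological type or not, and in the finite case we argue by counting pants. Throughout, "pants decomposition" is meant in the topological sense: a maximal collection of disjoint, pairwise non-homotopic, essential non-peripheral simple closed curves. Each complementary component is a pair of pants, meaning a sphere with three boundary components, where each boundary component is a cuff, a puncture, or an ideal boundary curve around a removed disc.

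\emph{Infinite topological type.} If $S$ has infinite topological type, we take any exhaustion of $S$ by compact essential subsurfaces. Such an $S$ contains essential subsurfaces with arbitrarily many pants, and hence an essential copy of $\Sigma_{0,4}$ built from two adjacent pants. Such an $S$ admits a (possibly infinite) pants decomposition with at least two pairs of pants.

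\emph{Finite type, bookkeeping.} Now let $S$ be of type $(g,n,b)$. Here we treat an ideal boundary circle of a bordered surface as a hole, just like a puncture. A topological pants decomposition then exists whenever $2g-2+n+b>0$, and it consists of exactly $2g-2+n+b$ pairs of pants. This is an Euler characteristic count, since each pair of pants has $\chi=-1$. So we need $2g-2+n+b\ge 2$, that is $2g+n+b\ge 4$, except when $S$ is a torus minus a disc. We list the cases with $2g+n+b\le 3$ and check each against conditions (1) and (2), using the dimension formula. When $b=0$, $\dim_\C \T_{g,n}^0=3g-3+n$. When $b>0$, the space is infinite dimensional.

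\emph{Genus $g\ge 1$.} If $g\ge 2$, the bound $2g+n+b\ge 4$ holds automatically. If $g=1$, we need $n+b\le 1$ to fail the bound. The closed torus has $\dim=1$, and so does the once-punctured torus; both are excluded by (1). The torus minus a disc is the stated exceptional case.

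\emph{Genus $g=0$, with $n+b\le 3$.}
\begin{itemize}
\item If $b=0$, the surface is a sphere with at most three punctures, whose Teichm\"uller space is a point, excluded by (1).
\item If $b=1$, then $n\le 2$, giving a disc, a once punctured disc, or a twice punctured disc; all are excluded by (2).
\item If $b=2$, then $n\le 1$, giving an annulus or a once punctured annulus; both are excluded by (2).
\item If $b=3$, then $n=0$, giving the 3-holed sphere, excluded by (2).
\end{itemize}

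\emph{Main obstacle.} The only delicate point is the convention for bordered surfaces. One must confirm that a disc removed counts as a boundary component in the pants count, so that for instance $\Sigma_{1,1}$ with a disc removed really is a single pair of pants with two cuffs glued. This is exactly why it appears as an exception. The rest is routine case-checking.
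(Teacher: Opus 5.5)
Your argument is correct and is essentially the paper's proof: the paper lists the surfaces with non-trivial Teichm\"uller space that admit no pants decomposition (disc, annulus, once punctured disc, closed torus) and those made of a single pair of pants, then discards them using (1) and (2); this is exactly your enumeration of the cases $2g-2+n+b\le 1$, which you make systematic with the Euler characteristic count. Two small caveats: the formula $3g-3+n$ is wrong for the closed torus, although you use the correct value $\dim=1$ there anyway; and in the infinite-type case the real input is the existence of pants decompositions for infinite-type surfaces, any of which must contain infinitely many pants, rather than the exhaustion remark about an essential $\Sigma_{0,4}$ (the paper also takes this existence fact for granted).
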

\begin{proof} A Riemann surface whose Teichm\"uller space is non-trivial, and which does not allow a pants decomposition, is either  a disc, an annulus,  an once punctured disc, or a closed torus.
Since  the Teichm\"uller space of closed tori has dimension one, we conclude that if  $S$  satisfies  conditions $(1)$ and $(2)$, then it allows a pants decomposition. 

If $S$ consists of a single pair of pants then $S$ is either  a 3-holed sphere, a twice punctured disc, an once punctured annulus,  a once punctured torus, or a torus minus a disc.  Since  the Teichm\"uller space of once punctured tori has dimension one, we conclude that if  $S$  satisfies  conditions $(1)$ and $(2)$, then it either allow a pants decomposition consisting with at two least pairs of pants, or is biholomorphic to a torus minus a disc.
\end{proof}

\begin{lemma}\label{lemma-2} If $S$ is a Riemann surface satisfying the above conditions (1) and (2), and $S$ has a pants decomposition with exactly two pairs of pants, then one of the following holds:
\begin{itemize}
\item $S$ is of the type $(0,n,b)$, where $n+b=4$, and $b\ge1$,
\vskip .3cm
\item $S$ is of the type $(1,n,b)$, where $n+b=2$,
\vskip .3cm
\item $S$ is a closed surface of genus two.
\end{itemize}
\end{lemma}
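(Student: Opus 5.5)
The plan is to classify directly how two pairs of pants can be glued together to form a surface, and then to discard the configurations that violate conditions (1) and (2). Each pair of pants has three boundary "slots", and each slot is a cuff, a puncture, or a hole (a removed disc, i.e.\ an ideal boundary component). A pants decomposition with exactly two pairs of pants $P_1,P_2$ is obtained by gluing some cuffs. Since $S$ is connected, at least one cuff of $P_1$ is glued to a cuff of $P_2$. We can therefore split into cases according to the number $k\in\{1,2,3\}$ of cuffs shared between $P_1$ and $P_2$. A further possibility is that a single pair of pants has two of its own cuffs glued to each other, forming a handle.

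\emph{Case analysis via Euler characteristic.} I will use the Euler characteristic: $\chi(S)=\chi(P_1)+\chi(P_2)=-2$. Hence, if $S$ is of type $(g,n,b)$, then $2-2g-(n+b)=-2$, i.e.\ $2g+n+b=4$. This gives three possibilities:
\begin{itemize}
\item $g=0$ and $n+b=4$;
\item $g=1$ and $n+b=2$;
\item $g=2$ and $n+b=0$.
\end{itemize}
This is the whole classification, up to one point. I must check that a surface built from finitely many pants with punctures and holes as ends is indeed of finite type $(g,n,b)$, and that the boundary slots become either punctures or removed discs. For this I will invoke the standard fact that the ends of a Riemann surface admitting a finite pants decomposition are punctures or ideal boundary components (the latter correspond to "holes", i.e.\ removed closed discs in the paper's convention).

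\emph{Removing the excluded cases.} It then remains to use conditions (1) and (2). In the case $g=0$ with $b=0$ we get $n=4$, the four punctured sphere. Its Teichm\"uller space has dimension one, so it is excluded by (1). Hence $b\ge 1$ in the genus zero case, as stated. In the genus one and genus two cases nothing needs to be excluded: the once punctured torus with two holes, the torus with two punctures, and so on all have $\dim\T(S)\ge 2$. The listed exceptional surfaces in (2) all have a single pair of pants or none, so they do not interfere here.

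\emph{Main obstacle.} The only delicate point is the bookkeeping that identifies the types correctly. This includes the case where the decomposition might be understood to include the infinite-type or bordered situations. The step I expect to need most care is justifying that a surface with a two-pants decomposition has exactly the $n+b$ ends counted by the unglued slots. That step is where I would be explicit: there are $6$ slots in total, $2(\text{number of cuffs})$ of them are glued, and the remaining ones are punctures or holes. Counting the glued cuffs gives $6-2c=n+b$ and $g=c-2+1$ (via $\chi$), which matches the list.
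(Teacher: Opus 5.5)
Your proof is correct and is essentially the paper's argument. The paper also sorts by genus, implicitly using the relation $2g+n+b=4$ that you make explicit via $\chi(S)=-2$, and it likewise uses condition (1) only to discard the four punctured sphere. There are two slips, neither of which affects validity. First, a once punctured torus with two holes has $n+b=3$, so it is not one of these surfaces. Second, the once punctured 3-holed sphere excluded by condition (2) does admit a decomposition into two pairs of pants. Neither matters, because the lemma only asserts a necessary condition on $S$.
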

\begin{proof}  We consider $S$ according to its genus. The largest genus such $S$ can have is two. Since $S$ consists of exactly two pairs of pants, it has to be a closed surface of genus two.  If $S$ has genus one, then it is obtained from a closed torus by removing $n$ punctures and $b$ discs, where  $n+b=2$. 

Finally, if $S$ has genus zero, then it is obtained from the Riemann sphere by removing $n$ punctures and $b$ discs, where  $n+b=4$. If $b=0$,  then $S$ is a four punctured sphere,  and it does not satisfy condition (1) above since in this case $\dim(S)=1$ . This concludes the proof.
\end{proof}

If $S$ has a pants decomposition with at least three pairs of pants then $\dist_C\neq \dist_\T$ on $\T(S)$  according to Lemma \ref{lemma-1-1}. The same holds if $S$  is a closed Riemann surface of genus $2$, or  a twice punctured torus,  as shown in \cite{Markovic}. Likewise, we derive the same conclusion from Theorem \ref{theorem: 3-punctured disc case} when $S$ is a  3-punctured disc.

Together with Lemma \ref{lemma-3} and Lemma \ref{lemma-2}, this reduces Theorem \ref{theorem: main theorem}  to the following statement which is proved in the next subsection.
 
\begin{lemma}\label{lemma-4} Suppose the triple $(g,n,b)$ satisfies one of the following conditions:
\begin{itemize}
\item $g=0$,  $n+b=4$, and $b\ge2$,
\vskip .3cm
\item $g=1$,  $n+b=2$, and $b\ge 1$,
\vskip .3cm
\item $g=1$,  $n=0$, and $b=1$.
\end{itemize}
Then  $\dist_C\neq \dist_\T$ on $\T^b_{g,n}$.  
\end{lemma}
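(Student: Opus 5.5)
The plan is to treat each triple $(g,n,b)$ in Lemma \ref{lemma-4} with Lemma \ref{lemma-22}. For each target type I will look for a degree two holomorphic covering $\pi:X\to Y$, branched over a finite set $Q\subset Y$, such that two things hold. First, $X$ has the prescribed type $(g,n,b)$. Second, $Y\setminus Q$ is a surface on which $\dist_C\ne\dist_\T$ is already known. The known cases are:
\begin{itemize}
\item a 3-punctured disc, by Theorem \ref{theorem: 3-punctured disc case};
\item any surface with at least three pairs of pants, by Lemma \ref{lemma-1-1};
\item any type in Lemma \ref{lemma-4} that has already been settled earlier in the argument.
\end{itemize}
For degree two every branch point is automatically totally ramified, so the cover is fully ramified. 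The cover is then fixed by choosing a homomorphism $\pi_1(Y\setminus Q)\to\mathbb{Z}/2$ that is nontrivial on small loops around the points of $Q$. The type of $X$ follows from Riemann--Hurwitz, $\chi(X)=2\chi(Y)-|Q|$. A boundary circle or puncture of $Y$ lifts to two copies if its monodromy is trivial and to one copy otherwise.

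The concrete covers I would use are as follows.

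\emph{Type $(0,2,2)$.} Let $Y$ be a disc with one puncture $p$, and let $Q=\{q_1,q_2\}$, with trivial monodromy around $p$. The boundary monodromy is then trivial, so $\chi(X)=-2$, $X$ has two boundary circles and two punctures, and hence genus $0$. Here $Y\setminus Q$ is a 3-punctured disc.

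\emph{Type $(1,0,1)$.} Let $Y$ be a disc and $Q=\{q_1,q_2,q_3\}$. The boundary monodromy is nontrivial, so $\chi(X)=-1$ with one boundary circle, giving a torus minus a disc. Again $Y\setminus Q$ is a 3-punctured disc.

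\emph{Type $(1,1,1)$.} Let $Y$ be a once punctured disc with nontrivial monodromy around the puncture, and $Q=\{q_1,q_2\}$. The boundary monodromy is $1+1+1\ne0$, so $X$ has one boundary circle and one puncture, $\chi(X)=-2$, and $g=1$. Once more $Y\setminus Q$ is a 3-punctured disc.

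\emph{Type $(0,0,4)$.} Let $Y$ be an annulus, $Q=\{q_1,q_2\}$, with trivial monodromy along the core curve. Both boundary circles then split, so $X$ has four boundary circles and $\chi(X)=-2$, so $g=0$. Here $Y\setminus Q$ is of type $(0,2,2)$, which is handled by the first cover above.

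\emph{Type $(1,0,2)$.} Use the same annulus but with nontrivial core monodromy. Each boundary circle lifts to a single circle and $g=1$. The base is again of type $(0,2,2)$.

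The remaining case is type $(0,1,3)$, the once punctured 3-holed sphere, and I expect it to be the main obstacle. A parity count rules out the degree two construction. Any holomorphic involution of this surface must fix one of the three holes, hence the center of that hole, together with the puncture. Its quotient is therefore of type $(0,1,2)$, which lies on the exceptional list. Similarly, Riemann--Hurwitz forces every small-degree fully ramified cover onto $X$ to come from a base whose $Y\setminus Q$ is a twice punctured disc or an once punctured annulus, both excluded.

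For this case I would first search systematically through higher degree fully ramified covers, for example cyclic covers whose base $Y\setminus Q$ is a 3-punctured disc with some points of $Q$ allowed to become punctures of $X$. If that fails, I would argue directly, adapting the $L$-shaped construction behind Theorem \ref{theorem: 3-punctured disc case}. The idea is to take a suitable translation-type quadratic differential on a once punctured 3-holed sphere, build a polyplane through the corresponding Teichm\"uller disc, and show that the projection $\Fol$ fails to be smooth along a path leaving the polyplane, using a comparison path as in that theorem.
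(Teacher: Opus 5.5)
For the types $(0,2,2)$, $(1,0,1)$, $(1,1,1)$, $(0,0,4)$ and $(1,0,2)$, your double covers are correct and are essentially the paper's Claim \ref{claim-2} and the two claims after it. The one difference is $(1,0,2)$. The paper uses an unbranched double cover of a torus minus a disc and appeals to the $(1,0,1)$ case, whereas you branch an annulus over two points and appeal to $(0,2,2)$; both work.

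The gap is the type $(0,1,3)$. Your proposal does not prove it, and your first fallback provably cannot succeed, because your Riemann--Hurwitz count works in every degree. Suppose $X\to Y$ is fully ramified of degree $d$ and $X$ has type $(0,1,3)$. Then $Y$ has genus $0$, at least one puncture and at least one hole, so $\chi(Y)\le 0$. Each $q\in Q$ contributes at least $d/2$ to $\sum_{q\in Q}(d-\#\pi^{-1}(q))=d\chi(Y)+2$. This leaves only two possibilities:
\begin{itemize}
\item $d=2$, $Q=\emptyset$, and $Y$ is a once punctured annulus;
\item $\chi(Y)=0$, $d\in\{3,4\}$, $|Q|=1$, and $Y\setminus Q$ is a twice punctured disc.
\end{itemize}
The remaining option $d=2$, $|Q|=2$ produces type $(1,1,1)$ instead. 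Both possible bases are on the exceptional list. The same count also rules out the variant over a $3$-punctured-disc base where some preimages of $Q$ are kept as punctures. Your second fallback, adapting the $L$-shaped argument, is only an outline of a new theorem.

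That said, the paper does not prove this case either, because the first bullet of the statement is overstated. The paper's claims treat only the five types above. $\T^3_{0,1}$ is one of the seven spaces explicitly left open in Theorem \ref{theorem: main theorem}. In Section \ref{section-prvidokaz}, the once punctured 3-holed sphere is already excluded by condition (2), so the lemma is only needed with $b\in\{2,4\}$ in the first bullet. The right fix is to restrict the statement accordingly rather than try to prove $(0,1,3)$. With that restriction your argument is a complete proof; as literally stated, the $(0,1,3)$ instance is an open problem.
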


\subsection{Proof of Lemma \ref{lemma-4}}  We begin with the following claim.

\begin{claim}\label{claim-1} Let $Y$ be a Riemann surface of type $(l,k,a)$, and $Q\subset Y$ a finite (possibly empty) set. Suppose that the set of punctures is divided into two subset $K_1$ and $K_2$, and that the set of holes is divided into subsets $A_1$ and $A_2$. Set $|K_i|=k_i$, $|A_i|=a_i$, and $|Q|=q$. If $q+a_1+k_1$ is a positive even integer, then there exists a Riemann surface $S$ of type $(g,n,b)$, and a fully ramified double cover $\pi:S\to Y$ branched over $Q$, such that 
$$
g=2k-1+\frac{1}{2}\big( q+k_1+a_1 \big),\quad n=k_1+2k_2,\quad b=a_1+2a_2.
$$
\end{claim}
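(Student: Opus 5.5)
The plan is to build the cover on the compactified surface and then remove the appropriate punctures and discs. Write $Y=\overline{Y}\setminus (K\cup D)$, where $\overline{Y}$ is a closed Riemann surface of genus $l$, $K=K_1\cup K_2$ is the set of punctures, and $D$ is a union of $a$ disjoint closed discs, split according to $A_1$ and $A_2$. In each disc $D_j$ with $D_j\in A_1$, choose an interior point $c_j$. Set
$$
B=Q\cup K_1\cup\{c_j: D_j\in A_1\}\subset \overline{Y},
$$
so that $|B|=m:=q+k_1+a_1$, which is even and positive by hypothesis. The idea is to take a double cover of $\overline{Y}$ branched exactly over $B$, lift the complex structure, and then delete the preimages of $K$ and $D$.

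\emph{Step 1: existence of the branched cover.} Double covers of $\overline{Y}\setminus B$ correspond to homomorphisms $\rho:\pi_1(\overline{Y}\setminus B)\to \mathbb{Z}/2$. We need one that sends every small peripheral loop around a point of $B$ to the generator. Use the standard presentation $\prod_{i=1}^{l}[\alpha_i,\beta_i]\,\gamma_1\cdots\gamma_m=1$. Set $\rho(\alpha_i)=\rho(\beta_i)=0$ and $\rho(\gamma_j)=1$. The relation is respected exactly because $m$ is even, which is where the parity hypothesis enters. Positivity of $m$ guarantees that the cover is genuinely branched, hence connected. Completing over $B$ gives a closed topological surface $\overline{S}$ and a double branched cover $\overline{\pi}:\overline{S}\to\overline{Y}$, each point of $B$ having exactly one (ramified) preimage. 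Pulling back the complex structure of $\overline{Y}$ makes $\overline{S}$ a Riemann surface and $\overline{\pi}$ holomorphic; near ramification points the local coordinate is $\sqrt{z}$.

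\emph{Step 2: Riemann--Hurwitz.} We have $\chi(\overline{S})=2\chi(\overline{Y})-m$, that is, $2-2g=2(2-2l)-m$. This gives $g=2l-1+\tfrac12(q+k_1+a_1)$, which is the stated formula with $k$ read as the genus $l$ of $Y$; I believe the $k$ in the statement is a typo for $l$.

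\emph{Step 3: removing punctures and holes.} Let $S=\overline{\pi}^{-1}(Y)$ and $\pi=\overline{\pi}|_S$. Each puncture in $K_1$ is a branch point and has a single preimage, while each puncture in $K_2$ has two preimages. This gives $n=k_1+2k_2$. A disc in $A_1$ contains exactly one branch point, so its preimage is a single disc, the double cover of a disc branched at one point. A disc in $A_2$ contains no branch point and is simply connected, so its preimage is two disjoint discs. This gives $b=a_1+2a_2$, so $S$ has type $(g,n,b)$.

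It remains to see that $\pi:S\to Y$ is branched only over $Q$ and is fully ramified. The other branch points, $K_1$ and the centres $c_j$, lie outside $Y$. Each $q\in Q$ has a unique preimage, which is ramified of order two. The main point requiring care is Step 1: producing the homomorphism with the right peripheral images and checking that parity is exactly the needed constraint. The rest is bookkeeping.
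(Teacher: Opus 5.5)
Your proposal is correct and is essentially the paper's argument. The paper takes the double cover of $Y\setminus Q$ with nontrivial monodromy around $Q$, $K_1$, $A_1$ and trivial monodromy around $K_2$, $A_2$, fills in $Q$, and applies Riemann--Hurwitz; this is exactly the restriction of your branched cover of $\overline{Y}$, and your explicit homomorphism $\rho$ supplies the parity check the paper leaves implicit. You are also right that the $k$ in the genus formula should be the genus $l$ of $Y$, as the paper's own application with $l=0$, $k_2=a_2=1$, $q=2$ yielding type $(0,2,2)$ confirms.
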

\begin{proof}  Let $Y_1=Y\setminus Q$. There exists a double (unbranched) cover $\pi_1:S_1 \to Y_1$ such that 
each point from $Q$ has a single preimage in $S_1$,  and such that each  puncture from $K_1$, and each hole from $A_1$, lifts to a single puncture, and a single hole, respectively. Then, each  puncture from $K_2$, and each hole from $A_2$, has two preimages respectively.

Let $\pi:Y\to S$ be the induced fully ramified double cover. This cover is branched over $Q$. The genus $g$ of $S$ is calculated by the Riemann-Hurwitz formula. Moreover, $n=a_1+2a_2$, and $n=k_1+2k_2$. 
\end{proof}

To finish the proof of  Lemma \ref{lemma-4} we use the previous claim to find a fully ramified double cover  $\pi:S\to Y$, branched over $Q\subset Y$, in the situation when $\dist_C \ne \dist_\T$ on $\T(Y\setminus Q)$. By Lemma \ref{lemma-22} we then find that  $\dist_C \ne \dist_\T$ on $\T(S)$. The next three claims cover all cases from Lemma \ref{lemma-4}.

\begin{claim}\label{claim-2} $\dist_C \ne \dist_\T$ on $\T^2_{0,2}$, $\T^1_{1,0}$, and $\T^1_{1,1}$. 
\end{claim}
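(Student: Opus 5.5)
The plan is to deduce each of the three cases from Theorem \ref{theorem: 3-punctured disc case}, using Claim \ref{claim-1} and Lemma \ref{lemma-22}. In each case I will take $Y$ to be a planar surface of type $(0,k,1)$ and $Q\subset Y$ a set of $q$ points with $k+q=3$. Then $Y\setminus Q$ is a $3$-punctured disc, so $\T(Y\setminus Q)=\T^1_{0,3}$, and $\dist_C\ne\dist_\T$ there by Theorem \ref{theorem: 3-punctured disc case}. If Claim \ref{claim-1} produces a fully ramified double cover $\pi:S\to Y$ branched over $Q$ with $S$ of the required type $(g,n,b)$, then Lemma \ref{lemma-22} gives $\dist_C\ne\dist_\T$ on $\T^b_{g,n}$.

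I read the genus formula in Claim \ref{claim-1} with the genus $l$ of $Y$ in place of $k$, as Riemann--Hurwitz dictates. For $l=0$ it becomes $g=-1+\tfrac12(q+k_1+a_1)$. With $a=1$ the parameter choices are as follows.
\begin{itemize}
\item $\T^2_{0,2}$: take $k=1$, $q=2$, $k_1=0$, $k_2=1$, $a_1=0$, $a_2=1$. Then $q+k_1+a_1=2$, so $g=0$, $n=2$ and $b=2$. Here $Y$ is a once punctured disc, and both its puncture and its hole lift to two copies.
\item $\T^1_{1,0}$: take $k=0$, $q=3$, $a_1=1$. Then $q+a_1=4$, so $g=1$, $n=0$ and $b=1$. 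Here $Y$ is a disc branched at three points.
\item $\T^1_{1,1}$: take $k=1$, $q=2$, $k_1=1$, $a_1=1$. Then $q+k_1+a_1=4$, so $g=1$, $n=1$ and $b=1$.
\end{itemize}
In every case $q+k_1+a_1$ is a positive even integer, as Claim \ref{claim-1} requires.

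The only point needing care is that the double cover in Claim \ref{claim-1} exists and is fully ramified. I will check this directly. The group $\pi_1(Y\setminus Q)$ is free on all but one of the peripheral loops (around the points of $Q$, the punctures, and the hole), and the product of all peripheral loops is trivial. Define a homomorphism to $\mathbb{Z}/2$ sending the loops around $Q\cup K_1\cup A_1$ to $1$ and the rest to $0$. This is consistent exactly when $q+k_1+a_1$ is even, and it is surjective since this number is positive.

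In the associated double cover, each point of $Q$, each puncture in $K_1$ and the boundary of each hole in $A_1$ has connected preimage, while the ends in $K_2$ and $A_2$ split in two. Filling in the preimages of $Q$ gives a branched double cover in which every branch point has a single ramified preimage, so the cover is fully ramified. The Euler characteristic then yields the stated genus. I do not expect any real obstacle beyond this parity bookkeeping, since the analytic content sits entirely in Theorem \ref{theorem: 3-punctured disc case} and the Maclachlan--Harvey/Winarski isometric embedding behind Lemma \ref{lemma-22}.
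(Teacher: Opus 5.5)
Your proposal is correct and matches the paper's proof: the same three parameter choices in Claim \ref{claim-1} (all with $l=0$, so $Y\setminus Q$ is a $3$-punctured disc), combined with Lemma \ref{lemma-22} and Theorem \ref{theorem: 3-punctured disc case}. Your reading of the genus formula as $2l-1+\tfrac12(q+k_1+a_1)$ is the correct Riemann--Hurwitz count, and your explicit homomorphism to $\mathbb{Z}/2$ supplies the existence and full ramification of the double cover, which the paper's proof of Claim \ref{claim-1} only asserts.
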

\begin{proof} We using the notation from Claim \ref{claim-1}. Set $l=0,k_1=0,k_2=1,a_1=0,a_2=1$, and  $q=2$. Then  $S$ is a Riemann surface of type $(0,2,2)$.  When $l=0,k_1=0,k_2=0,a_1=1,a_2=0, q=3$, then $(g,n,b)=(1,0,1)$. If $l=0,k_1=1,k_2=0,a_1=1,a_2=0, q=2$, then $(g,n,b)=(1,1,1)$. 
In all three cases $Y\setminus Q$ is of type $(0,3,1)$. The claim follows from Lemma \ref{lemma-22} and Theorem \ref{theorem: 3-punctured disc case}.
\end{proof}

\begin{claim} $\dist_C \ne \dist_\T$ on  $\T^4_{0,0}$.
\end{claim}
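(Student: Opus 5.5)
The plan is to apply Claim \ref{claim-1} once more, choosing the data so that the double cover $S$ is a sphere with four holes and the quotient $Y\setminus Q$ is a surface already handled by Claim \ref{claim-2}. Then Lemma \ref{lemma-22} finishes the proof. In the notation of Claim \ref{claim-1} (reading the genus of $Y$ as $l$ in the genus formula), we need $(g,n,b)=(0,0,4)$.

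First we pin down the parameters.
\begin{itemize}
\item Since $n=k_1+2k_2=0$, we must take $k_1=k_2=0$.
\item Since $g=2l-1+\frac12(q+k_1+a_1)=0$, we must take $l=0$ and $q+a_1=2$.
\item Since $b=a_1+2a_2=4$, the possibilities are $a_1=0,a_2=2$ or $a_1=2,a_2=1$; note $a_1=1$ is impossible because it would make $a_2$ non-integral.
\end{itemize}
The choice $a_1=2,a_2=1$ forces $q=0$ and $Y\setminus Q$ a three-holed sphere, whose Teichm\"uller space $\T^3_{0,0}$ is one of the open cases, so it is useless. We therefore take $l=0$, $k_1=k_2=0$, $a_1=0$, $a_2=2$, $q=2$. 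Then $Y$ is an annulus (type $(0,0,2)$), $Q$ consists of two points, and $Y\setminus Q$ is of type $(0,2,2)$. Claim \ref{claim-1} applies because $q+a_1+k_1=2$ is a positive even integer.

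Next we check directly that the cover exists, since this is the only point needing care. We need a homomorphism $\pi_1(Y\setminus Q)\to\mathbb Z/2$ that sends the loops around the two points of $Q$ to the nontrivial element and the loops around the two boundary circles to the trivial element. The product of the four peripheral classes is trivial in homology, and the sum $1+1+0+0\equiv 0 \pmod 2$ is consistent with this, so such a homomorphism exists. The resulting double cover is fully ramified over $Q$, and each hole of $Y$ has two preimages.

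As a sanity check, the Euler characteristic is $\chi(Y\setminus Q)=-2$, so the unbranched cover has $\chi=-4$. Filling in the two ramification points gives $\chi=-2=2-2\cdot 0-4$. Hence $S$ is a sphere with four holes, i.e.\ of type $(0,0,4)$.

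Finally, Claim \ref{claim-2} gives $\dist_C\ne\dist_\T$ on $\T^2_{0,2}=\T(Y\setminus Q)$. Lemma \ref{lemma-22} then yields $\dist_C\ne\dist_\T$ on $\T^4_{0,0}$. The only real obstacle is excluding the tempting $q=0$ choice, which lands on an unresolved space.
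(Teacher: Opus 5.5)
Your proposal is correct and takes the same approach as the paper. The paper also chooses $l=0$, $k_1=k_2=0$, $a_1=0$, $a_2=2$, $q=2$ in Claim~\ref{claim-1}, giving a fully ramified double cover of type $(0,0,4)$ over a surface with $Y\setminus Q$ of type $(0,2,2)$, and then concludes via Lemma~\ref{lemma-22} and Claim~\ref{claim-2}. Your additional checks are sound but left implicit in the paper: the explicit $\mathbb{Z}/2$ homomorphism, the Euler characteristic computation, and ruling out the alternative $q=0$ choice that would land in $\T^3_{0,0}$.
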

\begin{proof} Set  $l=0,k_1=0,k_2=0,a_1=0,a_2=2, q=2$. Then $S$ is of type $(g,n,b)=(0,0,4)$, while 
$Y\setminus Q$ is of type $(0,2,2)$. The claim follows from Lemma \ref{lemma-22} and the fact that  $\dist_C \ne \dist_\T$ on $\T^2_{0,2}$ which was proved in Claim \ref{claim-2}.
\end{proof}

\begin{claim} $\dist_C \ne \dist_\T$ on $\T^2_{1,0}$.  
\end{claim}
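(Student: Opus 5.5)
The plan is to proceed exactly as in the two preceding claims. Using Claim \ref{claim-1}, we will find a Riemann surface $Y$, a finite set $Q\subset Y$, and a partition of the punctures and holes of $Y$ such that the resulting fully ramified double cover $\pi:S\to Y$ has $S$ of type $(1,0,2)$. We will arrange that $\dist_C\ne\dist_\T$ is already known on $\T(Y\setminus Q)$, and then conclude by Lemma \ref{lemma-22}.

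First we pin down the parameters. Reading the genus formula of Claim \ref{claim-1} as $g=2l-1+\frac12(q+k_1+a_1)$, where $l$ is the genus of $Y$, we take $l=0$. The conditions become
$$
\tfrac12(q+k_1+a_1)=2,\qquad k_1+2k_2=0,\qquad a_1+2a_2=2 .
$$
The second condition forces $k_1=k_2=0$. For the third, we choose $a_1=2$ and $a_2=0$, which leaves $q=2$. So $Y$ is an annulus (a sphere with two holes), both holes lift to single holes, and $Q$ consists of two points in $Y$. The parity hypothesis of Claim \ref{claim-1} holds, since $q+k_1+a_1=4$ is positive and even.

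As a check via Euler characteristics:
\begin{itemize}
\item $\chi(Y\setminus Q)=0-2=-2$, so the unbranched double cover has Euler characteristic $-4$;
\item a surface of type $(1,0,2)$ has $\chi=-2$, and removing the two preimages of $Q$ gives $-4$.
\end{itemize}
These agree, so such an $S$ exists. Then $Y\setminus Q$ is of type $(0,2,2)$, and $\dist_C\ne\dist_\T$ on $\T^2_{0,2}$ by Claim \ref{claim-2}. Lemma \ref{lemma-22} now gives $\dist_C\ne\dist_\T$ on $\T^2_{1,0}$.

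The only step needing care is checking that the double cover required in Claim \ref{claim-1} exists, i.e. that there is a homomorphism $\pi_1(Y\setminus Q)\to\mathbb Z/2$ with the prescribed values on the peripheral loops. Both loops around the points of $Q$ and both hole boundaries must map to $1$. The product of all four peripheral classes is trivial in $\pi_1$, and the sum of the four prescribed values is $4\equiv0$, so such a homomorphism exists.

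As a fallback, we could take $a_1=0$, $a_2=1$, $q=4$. Then $Y$ is a disc with four marked points, and $Y\setminus Q$ is a 4-punctured disc. This surface has a pants decomposition with three pairs of pants, so Lemma \ref{lemma-1-1} applies to it directly, and Lemma \ref{lemma-22} again finishes the proof.
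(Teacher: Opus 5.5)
Your proof is correct. It uses the same mechanism as the paper (a double cover from Claim~\ref{claim-1} followed by Lemma~\ref{lemma-22}), but with a different cover and a different input space.

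\textbf{The two covers.} The paper takes $l=1$, $k_1=k_2=0$, $a_1=0$, $a_2=1$, $q=0$. This is an unbranched double cover of a one-holed torus in which the hole lifts to two holes. The base is of type $(1,0,1)$, and the input is $\dist_C\ne\dist_\T$ on $\T^1_{1,0}$ from Claim~\ref{claim-2}. You instead take a double cover of an annulus branched at two points, with each boundary circle lifting to a single circle, so your input is $\T^2_{0,2}$.

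\textbf{What each route costs.} Both $\T^1_{1,0}$ and $\T^2_{0,2}$ come from Theorem~\ref{theorem: 3-punctured disc case} via Claim~\ref{claim-2}. So the two routes are equally costly: each is a two-step chain starting from $\T^1_{0,3}$. A single double cover from type $(0,3,1)$ cannot work, since it would force $k_1=a_1=0$ and $q=3$, violating parity.

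\textbf{A small point in your favour.} Your data give $q+k_1+a_1=4$, so they meet the stated hypotheses of Claim~\ref{claim-1} literally, and you check the $\mathbb Z/2$-homomorphism explicitly. The paper's data give $q+k_1+a_1=0$, outside the stated hypothesis. It tacitly uses that for $l=1$ the needed unbranched cover exists anyway, because the boundary loop is a commutator.

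\textbf{Your fallback.} It is also valid and is in fact more economical. There $Y\setminus Q$ is a 4-punctured disc, which decomposes into three pairs of pants, so Lemma~\ref{lemma-1-1} applies directly. That argument bypasses Theorem~\ref{theorem: 3-punctured disc case} entirely and rests only on Theorem~\ref{theorem: 5-punctured sphere case} and Theorem~\ref{theorem: embedding Teichmuller space}.
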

\begin{proof} Set $l=1,k_1=0,k_2=0,a_1=0,a_2=1$, and  $q=0$. Then  $S$ is a Riemann surface of type $(1,0,2)$, while    $Y\setminus Q$ is of type $(1,0,1)$.
The claim follows from Lemma \ref{lemma-22} and the fact that  $\dist_C \ne \dist_\T$ on $\T^1_{1,0}$ which was proved in Claim \ref{claim-2}.
\end{proof}

\section{Hyperbolic geometry of quasiconformal maps}\label{section-thick} In this section we recall some standard facts, and derive some basic consequences, relating quasiconformal maps and hyperbolic geometry.  Given a hyperbolic surface by $d_S$ we denote the complete hyperbolic metric on it.  The following is a well known corollary of the Mori's Theorem (see \cite{Ahlfors}).  

\begin{lemma}\label{lemma-mozda-1} Let $K\ge 1$. Then there exists a constant $D=D(K)>0$ such that if $f,g:S_1 \to S_2$ are two homotopic $K$-quasiconformal homeomorphisms between  two hyperbolic Riemann surfaces $S_1$ and $S_2$, then $d_{S_{2}}(f(z),g(z))\le D$, for every $z\in S_1$. 
\end{lemma}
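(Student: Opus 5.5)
The plan is to lift to the universal covers and reduce everything to a statement about quasiconformal self-maps of the upper half plane $\Ha$, where Mori's theorem gives uniform Hölder control. Write $S_i=\Ha/\Gamma_i$. Since $f$ and $g$ are homotopic, we can choose lifts $\tilde f,\tilde g:\Ha\to\Ha$ that are both $K$-quasiconformal and induce the same isomorphism $\Gamma_1\to\Gamma_2$. We extend both lifts to $\overline{\Ha}$. Because they conjugate $\Gamma_1$ to $\Gamma_2$ in the same way, their boundary values agree at fixed points of hyperbolic and parabolic elements. When $\Gamma_1$ is non-elementary these points are dense in the limit set. (If $S_1$ is of the second kind, the two boundary extensions agree on the limit set but may differ on the ideal boundary arcs; this will need care.)

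Since $d_{S_2}(f(z),g(z))\le d_\Ha(\tilde f(\tilde z),\tilde g(\tilde z))$, it suffices to bound the right-hand side uniformly. Fix $\tilde z\in\Ha$. We normalize by Möbius transformations $A,B$ so that $A(\imu)=\tilde z$ and $B(\tilde f(\tilde z))=\imu$, and consider $F=B\tilde fA$ and $G=B\tilde gA$. These are $K$-quasiconformal self-maps of $\Ha$ with $F(\imu)=\imu$. The quantity to bound is $d_\Ha(\imu,G(\imu))$.

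The compactness step is the following. The family of $K$-quasiconformal self-maps of $\Ha$ fixing $\imu$ is normal and equicontinuous on $\overline{\Ha}$ in the spherical metric; this is where Mori's theorem gives the uniform Hölder modulus. If the distance were unbounded along some sequence, then $G_n(\imu)\to\partial\Ha$, and after passing to limits $G_n$ would degenerate. The boundary maps of $F_n$ and $G_n$ agree on a set that, after normalization, must contain at least three well-separated points. This gives a contradiction, since a $K$-quasiconformal map which agrees with a normalized $K$-quasiconformal map at three separated boundary points cannot send $\imu$ arbitrarily far.

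The main obstacle is guaranteeing these three separated agreement points uniformly in $\tilde z$, especially when $S_1$ is of the second kind or $\tilde z$ projects deep into a cusp or funnel. To get around this, I would avoid boundary values and use the homotopy directly, in the standard route. Take the $K$-qc map $g^{-1}\circ f$, which is homotopic to the identity. Pass to the universal cover, where the lift $h=\tilde g^{-1}\tilde f$ is $K^2$-quasiconformal and commutes with $\Gamma_1$. Then invoke the known fact that a $K$-quasiconformal self-map of $\Ha$ that commutes with a Fuchsian group (that is, one homotopic to the identity rel the group action) moves points a bounded hyperbolic distance $C(K)$. This fact follows from Mori's theorem via the same normal-family argument, with the commuting relation supplying the needed boundary control: $h$ extends as the identity on the limit set, and on the free arcs the equivariant homotopy gives the bound. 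Finally, since $\tilde g$ is a $K$-qc map it is a quasi-isometry of $\Ha$ with constants depending only on $K$. Therefore $d(\tilde g(h(\tilde z)),\tilde g(\tilde z))\le D(K)$, which gives the claim.
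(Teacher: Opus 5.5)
The paper gives no proof of this lemma; it cites it as a well-known consequence of Mori's theorem. Your skeleton is the standard one: lift, form $h=\tilde g^{-1}\tilde f$, bound the displacement of $h$, then push forward by the quasi-isometry $\tilde g$. The first and last steps are fine.

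\textbf{The gap is the middle step.} The ``known fact'' you invoke is just the lemma itself for $S_2=S_1$, $g=\id$, and as you state it, it is false. Commuting with $\Gamma_1$ forces $h$ to be the identity only on the limit set. That set may be tiny (empty, one or two points when $\Gamma_1$ is elementary), or it may miss whole arcs of $\pt\Ha$ (second kind). An equivariant homotopy gives no control on those free arcs. Two examples:
\begin{itemize}
\item If $\Gamma_1$ is trivial, $z\mapsto z+1$ is conformal and homotopic to $\id$, yet it moves $x+\imu y$ an unbounded distance as $y\to 0$.
\item If $S_1$ has a funnel, use coordinates $(r,\theta)\in[0,\infty)\times\R/\mathbb{Z}$ on it, with $r$ the distance to the core geodesic. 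Set $f(r,\theta)=(r,\theta+c(r))$, where $c$ is smooth, $c=0$ near $r=0$ and $c\equiv 1/2$ for $r\ge 1$, and set $f=\id$ elsewhere. This $f$ is $K$-quasiconformal for some fixed $K$. It is homotopic to $\id$ through $(r,\theta)\mapsto(r,\theta+sc(r))$, whose lift is an equivariant homotopy. Yet its lift acts on the corresponding free arc as a nontrivial hyperbolic transformation, and $d_{S_1}(f(z),z)\to\infty$ as $z$ goes out the funnel.
\end{itemize}
So with free (merely equivariant) homotopy the lemma itself fails. Your sentence ``on the free arcs the equivariant homotopy gives the bound'' is exactly where the argument breaks.

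\textbf{The missing ingredient.} ``Homotopic'' must mean homotopic relative to the ideal boundary. This is the paper's convention (see the remark on the Teichm\"uller equivalence relation). It is also what holds in the paper's application: the Douady--Earle map in Lemma \ref{lemma-mozda-2} has the same boundary values as the lift of $f$.

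With that hypothesis the lifts $\tilde f$ and $\tilde g$ have the same extension to all of $\pt\Ha$, so $h$ is the identity on $\pt\Ha$. The argument then closes immediately. For $\tilde z\in\Ha$, choose a M\"obius $A$ with $A(\imu)=\tilde z$. Then $A^{-1}hA$ is $K^2$-quasiconformal and fixes $\pt\Ha$ pointwise. Such maps form a compact family (this is where Mori's theorem enters), hence
$$
d_\Ha(\tilde z,h(\tilde z))=d_\Ha(\imu,A^{-1}hA(\imu))\le C(K).
$$
No distinction between elementary, first-kind and second-kind groups is needed, and your worry about finding three well-separated agreement points disappears.
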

We also have:

\begin{lemma}\label{lemma-mozda-2} Let $K\ge 1$. Then there exists a constant $L=L(K) \ge 1$ such that every $K$-quasiconformal homeomorphism  $f:S_1 \to S_2$ is  homotopic to a $L$-quasiconformal homeomorphism $g:S_1 \to S_2$ which is  
$L$-bilipschitz with respect to the metrics $d_{S_{1}}$ and $d_{S_{2}}$.
\end{lemma}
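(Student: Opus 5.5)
We will prove Lemma \ref{lemma-mozda-2} by lifting to the universal cover $\Ha$ and replacing $f$ by a canonical, equivariant, barycentric-type extension of its boundary values. Write $S_i=\Ha/\Gamma_i$ and let $\tilde f:\Ha\to\Ha$ be a lift of $f$. It conjugates $\Gamma_1$ to $\Gamma_2$ and extends to a $K$-quasisymmetric homeomorphism $h$ of $\partial\Ha$ (Mori's theorem gives quasisymmetry constants depending only on $K$). The idea is to define $\tilde g$ as the Douady--Earle extension of $h$. It has three properties we need.
\begin{itemize}
\item It is conformally natural, $\mathrm{ex}(A\circ h\circ B)=A\circ \mathrm{ex}(h)\circ B$ for M\"obius $A,B$. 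Hence $\tilde g$ is equivariant with respect to the same isomorphism $\Gamma_1\to\Gamma_2$, and descends to a map $g:S_1\to S_2$.
\item It is real-analytic and quasiconformal with dilatation bounded by a constant $L_0(K)$.
\item It induces the same action on $\pi_1$ as $f$, so $g$ is homotopic to $f$. When $S_i$ have boundary or infinite type, we use the equivariant homotopy $(1-t)\tilde f+t\tilde g$ along hyperbolic geodesics. This is legitimate because $\tilde f$ and $\tilde g$ stay within bounded distance $D$ of each other by Lemma \ref{lemma-mozda-1}-type reasoning applied upstairs.
\end{itemize}

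The key step is the bilipschitz estimate. By naturality it suffices to bound $\|D\tilde g(z)\|$ and $\|D\tilde g(z)^{-1}\|$ in the hyperbolic metric at $z=\imu$ after normalization. Precompose and postcompose with M\"obius maps so that $z=\imu$ and $\tilde g(\imu)=\imu$. The normalized boundary map then ranges over the family of $K$-quasisymmetric maps fixing the barycentric normalization. This family is compact in the uniform topology, by Mori's theorem and the normalization.

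The derivative of the Douady--Earle extension at the center depends continuously on the boundary map. It is a nonsingular linear map for every homeomorphism, since the extension is a diffeomorphism. Compactness then gives uniform bounds on $\|D\tilde g(\imu)\|$ and $\|D\tilde g(\imu)^{-1}\|$ depending only on $K$. Because the hyperbolic metric is invariant under the normalizing M\"obius maps, these become pointwise bounds everywhere. A pointwise derivative bound gives the bilipschitz property of $\tilde g$ along geodesics, hence of $g$ for the quotient metrics $d_{S_1},d_{S_2}$, whose distances are infima over lifts. We then take $L=\max(L_0,\text{bilipschitz constant})$.

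The main obstacle is the compactness and continuity step. Specifically, one must check that the normalization really gives a compact family: we normalize by post-composing so that the barycenter of $h_*(\text{harmonic measure at }\imu)$ is $\imu$. One must also check that the derivative at the center is continuous in $h$ under uniform convergence. If bordered surfaces cause trouble, we first double $S_i$ across the boundary, or equivalently work with the Fuchsian groups of the interiors. The Douady--Earle construction only uses $\partial\Ha$ data and is unaffected, and its naturality with respect to the reflection makes the construction symmetric.
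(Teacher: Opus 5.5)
Your proposal is correct and follows the paper's route. The paper takes $g$ to be the Douady--Earle extension homotopic to $f$ and cites Douady--Earle for the fact that it is $L(K)$-quasiconformal and $L(K)$-bilipschitz. Your argument (conformal naturality, compactness of the barycentrically normalized boundary maps, and continuity plus nonsingularity of the derivative at the center) is the standard proof of that cited fact. The compactness step you flagged does hold: write $h=A\circ k$ with $k$ three-point normalized, so the barycenter of $k_*(\text{Lebesgue})$ stays in a compact subset of $\D$, which confines $A$ to a compact set. The only thing to drop is the doubling alternative. The hyperbolic metric of the double does not restrict to the complete hyperbolic metric $d_{S_i}$ on the interior, so you should work, as in your main argument, directly with the (possibly second-kind) Fuchsian groups uniformizing the interiors.
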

\begin{proof} Let $g$ be the Douady-Earle map \cite{D-E} which is homotopic to $f$. It is well known \cite{D-E} that there exists a constant $L\ge 1$ depending only on $K$ so that $g$ is both $L$-quasiconformal, and $L$-bilipschitz.
\end{proof}

\subsection{Injectivity radius and quasiconformal homeomorphisms}  For $z\in S$, we let $\inj_S(z)$ denote the injectivity radius of $z$ with respect to the metric $d_S$.
Recall that $\inj_S(z)$ is equal to half the length of shortest homotopically non-trivial closed curve containing $z$. 

We begin with the following elementary claim from hyperbolic geometry. The proof is left to the reader.

\begin{claim}\label{claim-mozda-3-0} Let $D>0$. Then there exists a  constant $C=C(D)\ge1$ such that for every hyperbolic or parabolic isometry $A:\Ha\to \Ha$, the inequality
\begin{equation}\label{eq-mozda-1-0}
d_\Ha(w,A(w))\le Cd_\Ha(z,A(z))
\end{equation}
holds for every $z,w\in \Ha$ when $d_\Ha(z,w)\le D$.
\end{claim}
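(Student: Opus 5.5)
We will compare the displacement functions of $A$ at the two points $z$ and $w$ using explicit formulas, treating the hyperbolic and parabolic cases separately. The key fact is that $z\mapsto d_\Ha(z,A(z))$ depends only on a single geometric quantity. In the hyperbolic case this quantity is the distance to the axis. In the parabolic case it is the height relative to the fixed point.

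\emph{Hyperbolic case.} Conjugate by an isometry so that $A(z)=e^{\ell}z$ with $\ell>0$ the translation length. If $r=d_\Ha(z,\gamma)$ is the distance from $z$ to the axis $\gamma$ (the imaginary axis), the standard formula gives
$$
\sinh\frac{d_\Ha(z,A(z))}{2}=\cosh r\,\sinh\frac{\ell}{2}.
$$
The triangle inequality gives $|r(w)-r(z)|\le D$, so $\cosh r(w)\le e^{D}\cosh r(z)$. Setting $u=d_\Ha(w,A(w))$ and $v=d_\Ha(z,A(z))$, we therefore get $\sinh(u/2)\le e^D\sinh(v/2)$.

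\emph{Parabolic case.} Conjugate so that $A(z)=z+1$. Then $\sinh\frac{d_\Ha(z,A(z))}{2}=\frac{1}{2\Img z}$. Since $d_\Ha(z,w)\le D$, we have $\Img w\ge e^{-D}\Img z$. This again yields $\sinh(u/2)\le e^D\sinh(v/2)$.

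It remains to prove the elementary inequality: if $\sinh(u/2)\le M\sinh(v/2)$ with $M=e^D\ge1$, then $u\le Cv$ for some $C=C(D)$. This is the only step needing care, since it must hold uniformly both for small $v$ (short translation lengths or deep cusp points) and for large $v$. The plan is to consider $h(v)=2\,\mathrm{arcsinh}(M\sinh(v/2))/v$ on $(0,\infty)$. As $v\to0$, $h(v)\to M$. As $v\to\infty$, $h(v)\to1$, because $\mathrm{arcsinh}(M\sinh(v/2))=v/2+\log M+o(1)$. Since $h$ is continuous, it is bounded, and $C=\sup h<\infty$ works. Alternatively, one can argue directly: $u/2\le \mathrm{arcsinh}(M\sinh(v/2))\le \log M+v/2$ handles $v\ge1$ with $C=1+2\log M$, while for $v\le1$ the concavity of $\mathrm{arcsinh}$ on $[0,\infty)$ gives $\mathrm{arcsinh}(Mx)\le M\,\mathrm{arcsinh}(x)$, hence $u\le Mv$. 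So $C(D)=\max(e^D,1+2D)$ suffices.
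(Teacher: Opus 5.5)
Your proof is correct and complete. The paper gives no argument for this claim (it says ``the proof is left to the reader''), so there is no alternative route to compare against. Your argument supplies the missing proof with the natural ingredients: the displacement formulas $\sinh\frac{d_\Ha(z,A(z))}{2}=\cosh r\,\sinh\frac{\ell}{2}$ for hyperbolic $A$ and $\sinh\frac{d_\Ha(z,A(z))}{2}=\frac{|c|}{2\Img z}$ for $A(z)=z+c$, together with the fact that both the distance to a geodesic and $\log \Img z$ are $1$-Lipschitz for $d_\Ha$.

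Two small remarks. First, $z\mapsto z+1$ and $z\mapsto z-1$ are not conjugate in $PSL(2,\R)$. So to normalize a parabolic to $z\mapsto z+1$ you either conjugate by an orientation-reversing isometry or simply keep a general translation $c\in\R\setminus\{0\}$. Either way this is harmless, since the displacement function is invariant under conjugation by any isometry.

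Second, your concavity inequality $\operatorname{arcsinh}(Mx)\le M\operatorname{arcsinh}(x)$ holds for all $x\ge 0$, not just for $x\le\sinh\frac12$. It therefore gives $u\le e^{D}v$ for every $v>0$, which makes the separate case $v\ge 1$ and the constant $1+2D$ unnecessary; you can take $C(D)=e^{D}$.
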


\begin{lemma}\label{lemma-mozda-3} Let $D>0$. Then there exists a  constant $C=C(D)\ge 1$ such that 
$$
\inj_S(w)\le C\inj_S(z)
$$ 
holds for every $z,w\in S$ when $d_S(z,w)\le D$.
\end{lemma}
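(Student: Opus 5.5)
We lift to the universal cover and apply Claim \ref{claim-mozda-3-0}. Write $S=\Ha/\Gamma$, where $\Gamma$ is a torsion-free Fuchsian group, and let $p:\Ha\to S$ be the covering projection. Recall that for $\tilde z\in\Ha$ with $p(\tilde z)=z$,
$$
\inj_S(z)=\tfrac12\min_{A\in\Gamma\setminus\{\id\}} d_\Ha(\tilde z,A(\tilde z)).
$$
This minimum is attained: $\Gamma$ is discrete, so only finitely many orbit points of $\tilde z$ lie in any ball. Every non-identity element of $\Gamma$ is hyperbolic or parabolic, because $\Gamma$ is torsion free and so has no elliptic elements. This is exactly the class of isometries covered by Claim \ref{claim-mozda-3-0}.

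Let $z,w\in S$ with $d_S(z,w)\le D$. Choose a lift $\tilde z$ of $z$. Since $p$ is a local isometry and a minimizing geodesic from $z$ to $w$ lifts starting at $\tilde z$, there is a lift $\tilde w$ of $w$ with $d_\Ha(\tilde z,\tilde w)=d_S(z,w)\le D$. Now pick $A\in\Gamma\setminus\{\id\}$ realizing the minimum at $\tilde z$, so that $d_\Ha(\tilde z,A(\tilde z))=2\inj_S(z)$. Let $C=C(D)$ be the constant from Claim \ref{claim-mozda-3-0}. Applying the claim to $A$ with the pair $\tilde z,\tilde w$ gives
$$
2\inj_S(w)\le d_\Ha(\tilde w,A(\tilde w))\le C\, d_\Ha(\tilde z,A(\tilde z))=2C\inj_S(z),
$$
which is the required inequality with the same constant $C$.

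The only real content is Claim \ref{claim-mozda-3-0}, whose proof was left to the reader. We sketch it in case it is not taken as given.

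For a hyperbolic $A$ with translation length $\ell$ and axis $\gamma$, the displacement is
$$
\cosh d_\Ha(x,A(x))=\cosh^2(r)\cosh\ell-\sinh^2(r),
$$
equivalently $\sinh\tfrac{d(x,Ax)}2=\cosh(r)\sinh\tfrac\ell2$, where $r=d_\Ha(x,\gamma)$.

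If $d_\Ha(z,w)\le D$, then the corresponding distances to the axis satisfy $r_w\le r_z+D$. Hence $\sinh\tfrac{d(w,Aw)}2\le e^D\sinh\tfrac{d(z,Az)}2$. Passing from $\sinh$ of half-distances back to distances costs an extra constant factor when the displacements are bounded. When displacements are large we have $d(w,Aw)\le d(z,Az)+2D$, which is again linear. So in either regime $d(w,Aw)\le C\,d(z,Az)$.

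The parabolic case follows by the analogous formula. With $A(x)=x+1$, we have $\sinh\tfrac{d(x,Ax)}2=\tfrac1{2\Img x}$, and $\Img$ changes by a factor at most $e^D$ over a distance $D$. Alternatively, it follows as a limit of the hyperbolic case.

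The one step needing care is the conversion of $\sinh(d/2)$ bounds into linear bounds for $d$. Here we use that $\sinh(x)/x$ is bounded above and below on bounded intervals, and handle large displacements with the triangle inequality.
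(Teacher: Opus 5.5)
Your proof is correct and matches the paper's: lift to $\Ha$, pick $A\in\Gamma$ with $d_\Ha(\tilde z,A(\tilde z))=2\inj_S(z)$, and apply Claim \ref{claim-mozda-3-0} at a lift $\tilde w$ with $d_\Ha(\tilde z,\tilde w)=d_S(z,w)\le D$. Your sketch of that claim, which the paper leaves to the reader, is also sound: it uses $\sinh\frac{d}{2}=\cosh(r)\sinh\frac{\ell}{2}$ together with the triangle inequality for large displacements, and the formula $\sinh\frac{d}{2}=\frac{1}{2\Img x}$ in the parabolic case.
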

\begin{proof} Let $\Gamma$ be the Fuchsian group uniformising the surface $S$. Let $\wh{z},\wh{w}\in \Ha$ denote the lifts 
of $z$ and $w$ to the hyperbolic plane $\Ha$ such that $d_S(z,w)=d_\Ha(\wh{z},\wh{w})$.
Let $A\in \Gamma$ be such that $d_\Ha(\wh{z},A(\wh{z}))=2\inj_S(z)$.  Since $d_\Ha(\wh{z},\wh{w})\le D$,  from Claim \ref{claim-mozda-3-0} we conclude that 
$$
d_\Ha(\wh{w},A(\wh{w}))\le Cd_\Ha(\wh{z},A(\wh{z}))= 2C\inj_S(z),
$$
where $C=C(D)$ is the constant from the previous lemma. Since $2\inj_S(w)\le d_\Ha(\wh{w},A(\wh{w}))$,  the lemma follows.
\end{proof}

Given a $K$-quasiconformal homeomorphism $f:S_1\to S_2$ between two hyperbolic surfaces, we are interested in the relationship between $\inj_{S_{1}}(z)$ and $\inj_{S_{2}}(f(z))$ when $z\in S_1$.  

\begin{lemma}\label{lemma-mozda-4} Let $K\ge 1$. Then there exists a constant $C=C(K) \ge 1$ such that for every  $K$-quasiconformal homeomorphism  $f:S_1 \to S_2$, the inequality
\begin{equation}\label{eq-mozda-1}
\inj_{S_{2}}(f(z))\le C \inj_{S_{1}}(z),
\end{equation}
holds for every $z\in S_1$.
\end{lemma}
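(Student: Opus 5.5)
The plan is to combine Lemma \ref{lemma-mozda-2} with Lemma \ref{lemma-mozda-1} and then compare closed-curve lengths. First, by Lemma \ref{lemma-mozda-2}, replace $f$ by a homotopic map $g:S_1\to S_2$ which is $L$-bilipschitz with respect to $d_{S_1}$ and $d_{S_2}$, where $L=L(K)$. By Lemma \ref{lemma-mozda-1}, applied to the two homotopic $\max(K,L)$-quasiconformal maps $f$ and $g$, we get $d_{S_2}(f(z),g(z))\le D$ for every $z\in S_1$, with $D=D(\max(K,L))$ depending only on $K$. Lemma \ref{lemma-mozda-3} then gives
$$
\inj_{S_2}(f(z))\le C(D)\,\inj_{S_2}(g(z)),
$$
so it suffices to prove $\inj_{S_2}(g(z))\le L\,\inj_{S_1}(z)$.

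For this last inequality, let $\gamma$ be a homotopically non-trivial closed curve through $z$ with length $2\inj_{S_1}(z)$; we can take it to be a geodesic loop based at $z$, which is rectifiable. Since $g$ is a homeomorphism, $g(\gamma)$ is a homotopically non-trivial closed curve through $g(z)$. Since $g$ is $L$-Lipschitz, $g(\gamma)$ has length at most $2L\inj_{S_1}(z)$. Hence $\inj_{S_2}(g(z))\le L\inj_{S_1}(z)$. Combining the two inequalities gives (\ref{eq-mozda-1}) with constant $C(K)=C(D)\,L$, which depends only on $K$.

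I do not expect a real obstacle. The only points to check are the bookkeeping of constants, namely that both $L$ and $D$ depend only on $K$, and that homotopy non-triviality is preserved. The latter holds because $g$ is a homeomorphism, so it is $\pi_1$-injective. If one prefers not to pass through $g$, one could argue directly using Mori's theorem on lifts to $\Ha$, but the route above uses only results already stated.
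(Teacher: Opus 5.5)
Your proof is correct and matches the paper's argument: you pass to the Douady--Earle map $g$ homotopic to $f$ (Lemma \ref{lemma-mozda-2}), get $d_{S_2}(f(z),g(z))\le D$ from Lemma \ref{lemma-mozda-1}, apply Lemma \ref{lemma-mozda-3} at the points $g(z)$ and $f(z)$, and use the bilipschitz bound $\inj_{S_2}(g(z))\le L\,\inj_{S_1}(z)$, which gives the constant $C=L\,C(D)$. Your justification of that last bound, via the image of a shortest homotopically non-trivial geodesic loop at $z$, is the step the paper states without comment.
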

\begin{proof} Let $L=L(K)$ be the constant  from Lemma \ref{lemma-mozda-2},  let $D=D(\max\{K,L\})$ be the constant from Lemma \ref{lemma-mozda-1}, and let $C_1=C_1(D)$ the constant from Lemma  \ref{lemma-mozda-3}.  Let $g:S_1 \to S_2$ be a  $L$-quasiconformal, and $L$-bilipschitz map,  homotopic to $f$. 

Since $g$ is $L$-bilipschitz, it follows that 
\begin{equation}\label{eq-moze-1}
\inj_{S_{2}}(g(z))\le L \inj_{S_{1}}(z).
\end{equation} 
Since  both $f$ and $g$ are $(\max\{K,L\})$-quasiconformal, we  have that 
$$
d_{S_{2}}(f(z),g(z)) \le D.
$$ 
Thus, from Lemma  \ref{lemma-mozda-3} we derive the inequality 
$$
\inj_{S_{2}}(f(z))\le C_1 \inj_{S_{2}}(g(z)). 
$$
We now apply the estimate (\ref{eq-moze-1}) to the right hand side of the previous inequality to obtain
$$
\inj_{S_{2}}(f(z))\le  LC_1  \inj_{S_{1}}(z). 
$$
Letting $C=LC_1$ proves the lemma.
\end{proof}

\subsection{Think-Thin decompositions} By $\Thin_r(S)$ we denote the $r$-thin part of $S$, that is, 
$$
\Thin_r(S)=\{  z\in S \, : \,   \inj_S(z) < r     \}.
$$
The following lemma is a direct corollary of Lemma \ref{lemma-mozda-4}.
\begin{lemma}\label{lemma-mozda-5} Let $K\ge 1$. Then there exists a constant $C=C(K) \ge 1$ such that for every  $K$-quasiconformal homeomorphism  $f:S_1 \to S_2$, the inclusion 
\begin{equation}\label{eq-mozda-2}
f\big(\Thin_{r}(S_1)\big)\subset  \Thin_{Cr}(S_2).
\end{equation}
\end{lemma}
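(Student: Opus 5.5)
This follows directly from Lemma \ref{lemma-mozda-4}, with the same constant $C=C(K)$ produced there. Fix a $K$-quasiconformal homeomorphism $f:S_1\to S_2$ and $r>0$. Take a point of $f\big(\Thin_r(S_1)\big)$ and write it as $f(z)$ with $z\in \Thin_r(S_1)$, so $\inj_{S_1}(z)<r$. By inequality (\ref{eq-mozda-1}),
$$
\inj_{S_2}(f(z))\le C\inj_{S_1}(z)<Cr .
$$
This says precisely that $f(z)\in \Thin_{Cr}(S_2)$, which proves the inclusion (\ref{eq-mozda-2}).

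The one point to check is that the constant does not depend on $r$, $f$, $S_1$ or $S_2$. This holds because Lemma \ref{lemma-mozda-4} gives a constant depending only on $K$. That dependence in turn comes from the Douady--Earle constant $L(K)$ of Lemma \ref{lemma-mozda-2}, the Mori-type constant $D$ of Lemma \ref{lemma-mozda-1}, and the constant of Lemma \ref{lemma-mozda-3}, each of which depends only on $K$.

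There is no real obstacle here; all the substantive work was already done in Lemma \ref{lemma-mozda-4}.
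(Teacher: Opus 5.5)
Your proof is correct and matches the paper, which states this lemma as a direct corollary of Lemma~\ref{lemma-mozda-4} without further argument. The argument is exactly yours: for $z\in\Thin_r(S_1)$ one has $\inj_{S_2}(f(z))\le C\inj_{S_1}(z)<Cr$, with the same constant $C=C(K)$.
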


We record the following claim which follows from the definition of the injectivity radius. 

\begin{claim}\label{claim-tvrd-00} Let $\gamma \subset S$ be a homotopically non-trivial  closed curve. 
Then $\gamma \subset \Thin_r(S)$ whenever $\l_S(\gamma)<2r$. The component of $\Thin_r(S)$ containing $\gamma$ is denoted by $\Thin_r(S)[\gamma]$.
\end{claim}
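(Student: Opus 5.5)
The plan is to argue directly from the characterization of the injectivity radius recalled above: $\inj_S(z)$ is half the length of the shortest homotopically non-trivial closed curve through $z$. Fix a point $z\in\gamma$. Since $\gamma$ is itself a homotopically non-trivial closed curve passing through $z$, we get
$$
\inj_S(z)\le \tfrac12\, l_S(\gamma) < r .
$$
Hence $z\in \Thin_r(S)$. Because $z\in\gamma$ was arbitrary, this gives $\gamma\subset \Thin_r(S)$.

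Having established $\gamma\subset\Thin_r(S)$, the second sentence of the claim is just a definition, once we know it makes sense. We must check that $\gamma$ lies in a single component of $\Thin_r(S)$. This holds because $\gamma$ is the continuous image of a circle, hence connected. A connected subset of $\Thin_r(S)$ lies in exactly one connected component, and this component is what we call $\Thin_r(S)[\gamma]$.

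The only point needing any care is the characterization of $\inj_S$ itself. If one instead takes as definition the supremum of radii of embedded discs, the equivalence with half the shortest essential loop through $z$ is standard: lift to $\Ha$ and note that
$$
2\inj_S(z)=\min_{A\in\Gamma\setminus\{\id\}} d_\Ha(\wh z,A\wh z),
$$
where $\Gamma$ is the uniformizing Fuchsian group. Any essential loop through $z$ lifts to a path from $\wh z$ to some $A\wh z$ with $A\ne \id$, and so it has length at least $d_\Ha(\wh z,A\wh z)\ge 2\inj_S(z)$. This is the same fact already used in the proof of Lemma \ref{lemma-mozda-3}, so I expect no real obstacle. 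The argument is a one-line consequence of the definition.
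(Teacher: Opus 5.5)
Your argument is correct and matches the paper's, which records the claim as following directly from the definition of the injectivity radius: $\inj_S(z)\le \tfrac12\,\ell_S(\gamma)<r$ for every $z\in\gamma$, since $\gamma$ itself is a homotopically non-trivial closed curve through $z$. Your extra remarks are also correct: connectedness of $\gamma$ makes $\Thin_r(S)[\gamma]$ well defined, and the lift to $\Ha$ justifies the characterization of $\inj_S$. They only spell out what the paper leaves implicit.
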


We finish this section by recalling the definition of the Margulis constant. 
Namely, there exists a constant $r_M$ such that when $0<r<r_M$, each connected component of  $\Thin_r(S)$ is either a tube around an embedded geodesic of length at most $2r_M$,  or is a  product neighbourhood of  a cusp. We record another elementary claim.

\begin{claim}\label{claim-tvrd-11}  Let $\gamma\subset S$ be a homotopically non-trivial  closed curve. 
If  $\l_S(\gamma)<2r < 2r_M$, then $\Thin_r(S)[\gamma]$ is an annulus. If $\beta$ is a core curve of this annulus then  $\gamma$ is homotopic to a power of $\beta$.
\end{claim}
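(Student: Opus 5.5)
We must prove Claim \ref{claim-tvrd-11}: if $\gamma$ is a homotopically non-trivial closed curve with $\l_S(\gamma)<2r<2r_M$, then $\Thin_r(S)[\gamma]$ is an annulus, and $\gamma$ is homotopic to a power of the core curve $\beta$.

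First, by Claim \ref{claim-tvrd-00}, $\gamma\subset\Thin_r(S)$. Since $\gamma$ is connected, it lies in a single component $\Thin_r(S)[\gamma]$.

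Because $r<r_M$, the Margulis description applies to this component. It is either a tube (collar) around a simple closed geodesic $\beta_0$ of length at most $2r_M$, or a product neighbourhood of a cusp. In the first case the component is an open annulus with core curve freely homotopic to $\beta_0$. In the second case it is a punctured disc, topologically an annulus, whose core curve is a loop around the cusp. So in either case $\Thin_r(S)[\gamma]$ is an annulus $A$. Let $\beta$ be a core curve of $A$.

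For the second assertion, use that $\pi_1(A)\cong\mathbb Z$ is generated by the class of $\beta$. Choose a basepoint on $\gamma$ and join it to $\beta$ by an arc inside $A$. The curve $\gamma$, viewed as a loop in $A$, is then homotopic within $A$ to $\beta^m$ for some $m\in\mathbb Z$. Composing with the inclusion $A\hookrightarrow S$ shows $\gamma$ is freely homotopic to $\beta^m$ in $S$.

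Since $\gamma$ is non-trivial in $S$, we have $m\neq 0$. It is also worth noting that the inclusion $A\hookrightarrow S$ is $\pi_1$-injective, since $\beta$ is either a geodesic class or a peripheral cusp class. This is not needed for the statement, but it confirms the power is well defined.

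The only point needing care is citing the exact form of the Margulis lemma: each component of $\Thin_r(S)$ for $r<r_M$ is an embedded annular collar or cusp neighbourhood, rather than just a union of such pieces. The collar lemma guarantees these pieces are disjoint for $r$ below the Margulis constant, so components are exactly single annuli. I would simply invoke this standard fact, as the paper does in the preceding paragraph, rather than reprove it.
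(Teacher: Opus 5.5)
Your proof is correct and is exactly the elementary argument the paper leaves unwritten: Claim \ref{claim-tvrd-00} puts the connected curve $\gamma$ inside a single component of $\Thin_r(S)$, and the Margulis description (valid because $r<r_M$) makes that component an annulus. Since the fundamental group of this annulus is $\mathbb{Z}$, generated by $\beta$, the curve $\gamma$ is freely homotopic to $\beta^m$, and $m\neq 0$ because $\gamma$ is non-trivial. One small correction: the fact that components are single annuli comes from the Margulis lemma itself, not from the collar lemma.
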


\subsection{Essential conformal embeddings}\label{section-embed} Let $Y\in \T^n_{0,0}$, and fix a hyperbolic surface $S$.
Throughout this subsection, we consider an essential conformal embedding  $\iota:Y\to S$, and relate the injectivity radii $\inj_Y(z)$ and $\inj_S(\iota(z))$. 

Recall that  $d_Y$ denotes the (complete) hyperbolic metric on $Y$, and $d_S$ the one on $S$.  
Let $\alpha_i \subset Y$, $1\le i\le n$,  be the closed geodesics (with respect to $d_Y$) homotopic to the  ends of $Y$. We refer to $\{\alpha_i\}_{1\le i\le n}$,  as the peripheral geodesics on $Y$.

We let $\beta_i=\iota(\alpha_i)$. The embedding $\iota$ is holomorphic so by the Schwartz lemma we have
\begin{equation}\label{eq-tvrd-1-01}
d_S(\iota(z),\iota(w))\le d_Y(z,w),
\end{equation}
for every two points $z,w\in Y$. In particular, we 
derive the inequality 
\begin{equation}\label{eq-tvrd-1}
\l_S\big( \beta_i \big)\le \l_Y(\alpha_i).
\end{equation}
Define the  union
$$
\Thin_r(S)[\beta]\overset{def}{=}\bigcup _{i=1}^n \Thin_r(S)[\beta_i].
$$
If $r<r_M$ then two annuli $\Thin_r(S)[\beta_i]$ and $\Thin_r(S)[\beta_j]$ are either disjoint or equal to each other. Thus, in this case the set $S\setminus \Thin_r(S)[\beta_i]$ is collection of disjoint essential subsurfaces  of $S$, exactly one of which is homotopy equivalent to $\iota(Y)$.

\begin{lemma}\label{lemma-tvrd} For every $D>0$ there exists  $C=C(D)\ge 1$ with the following properties. 
Let $\gamma\subset Y$ be a  homotopically non-trivial closed curve which is not  homotopic to a power of one of the peripheral geodesics of $Y$. If $\l_Y(\gamma)\le 2D$, and if $\iota(\gamma)$ intersects  $\Thin_r(S)[\beta]$, then
\begin{equation}\label{eq-tvrd-2}
r\ge \frac{r_M}{C}.
\end{equation}
\end{lemma}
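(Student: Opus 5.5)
We argue by contradiction: assume $r<r_M/C$ for a constant $C$ fixed at the end. Set $\gamma'=\iota(\gamma)$. By \eqref{eq-tvrd-1-01}, $\l_S(\gamma')\le \l_Y(\gamma)\le 2D$, so every point of $\gamma'$ lies within $d_S$-distance $D$ of any other point of $\gamma'$. Pick a point $w_0\in\gamma'\cap \Thin_r(S)[\beta_i]$ for some $i$, so $\inj_S(w_0)<r$. Lemma \ref{lemma-mozda-3} then gives $\inj_S(w)\le C_1 r$ for every $w\in\gamma'$, with $C_1=C_1(D)$.

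Hence the whole curve $\gamma'$ lies in $\Thin_{C_1r}(S)$. It is connected and meets $\Thin_r(S)[\beta_i]\subset\Thin_{C_1 r}(S)[\beta_i]$. Therefore $\gamma'\subset\Thin_{C_1r}(S)[\beta_i]$.

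Choose $C\ge C_1$, so that $C_1 r<r_M$. Then this component is either an annulus whose core is homotopic to $\beta_i$ (Claim \ref{claim-tvrd-11}, using $\l_S(\beta_i)<2r$ from the fact that $\beta_i$ meets the thin part), or a cusp neighbourhood. Since $\iota$ is essential, $\beta_i$ is non-trivial, so in either case the component is an annulus with core freely homotopic to $\beta_i$, or to a puncture when $\beta_i$ is peripheral in $S$. The fundamental group of the component is cyclic. Thus $\gamma'$ is freely homotopic in $S$ to a power of the core.

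I expect the main difficulty to be this case: $\beta_i$ may be homotopic into a cusp, and then the component is the cusp neighbourhood rather than the tube $\Thin_r(S)[\beta_i]$. To handle it, I would observe that, $\beta_i$ being in the component, the core's class generates a cyclic group containing $[\beta_i]$. Since $\beta_i$ is simple, $[\beta_i]$ is primitive, so it is a generator up to sign. Hence $[\gamma']$ is freely homotopic to $\beta_i^m$ in $S$.

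Now transfer this back to $Y$. Because $\iota_*:\pi_1(Y)\to\pi_1(S)$ is injective, the free homotopy in $S$ between $\iota(\gamma)$ and $\iota(\alpha_i)^m$ should pull back to a free homotopy in $Y$. To justify this, I would use the fact that for an essential subsurface, conjugacy of images in $\pi_1(S)$ implies conjugacy in $\pi_1(Y)$ for these elements. This holds for an incompressible subsurface in which $\alpha_i$ is peripheral, via the covering of $S$ corresponding to $\iota_*\pi_1(Y)$, in which $Y$ lifts homeomorphically onto a compact core. Working in that cover $\tilde S$, the lift of $\gamma'$ and the lift of $\beta_i^m$ are freely homotopic, and $\tilde S$ deformation retracts to $\iota(Y)$. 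Consequently $\gamma\simeq\alpha_i^m$ in $Y$, which contradicts the hypothesis. Therefore $r\ge r_M/C$ with $C=C_1(D)$ (enlarged slightly if needed).
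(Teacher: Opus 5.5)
Your proof is correct and follows the paper's argument: Lemma \ref{lemma-mozda-3} puts all of $\iota(\gamma)$ inside the thin component $\Thin_{C_1 r}(S)[\beta_i]$, the cyclic fundamental group of that component gives $\iota(\gamma)\simeq\beta_i^m$ in $S$, and essentiality of $\iota$ then forces $\gamma$ to be peripheral in $Y$, contradicting the hypothesis. Your covering-space justification of the last step (which the paper merely asserts) is the right idea, but the lifted free homotopy ends at \emph{some} closed lift of $\beta_i^m$ in $\widetilde S$, which may lie in an annular end rather than in the lift of $\iota(Y)$ (this happens e.g.\ when two boundary curves of $\iota(Y)$ are isotopic in $S$); so what you actually get is that $\gamma$ is freely homotopic in $Y$ to a power of some $\alpha_j$, possibly with $j\ne i$ --- which is all the contradiction needs.
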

\begin{proof} Let $C=C(D)\ge 1$ be the constant from Lemma \ref{lemma-mozda-3}. From (\ref{eq-tvrd-1-01}), and since $\l_Y(\gamma)\le 2D$, it follows that the distance  between any two points on 
$\iota(\gamma)$ is at most $D$. Therefore,  if $\iota(\gamma)\cap \Thin_r(\beta)\ne \emptyset$, it follows that  
$$
\iota(\gamma)\subset \Thin_{Cr}(\beta).
$$ 

If $Cr<r_M$, then $\iota(\gamma)$ belongs to one of the disjoint collection of annuli homotopic to the curves 
$\beta_i$, $1\le i \le n$. But then by Claim \ref{claim-tvrd-11}, $\iota(\gamma)$ is homotopic to a power of one of 
$\beta_i$, $1\le i \le n$. Since $\iota$ is an essential embedding, it follows that $\gamma$ is homotopic to a power of one of the peripheral geodesics on $Y$ which is not possible. This shows that $Cr\ge r_M$, and we are done.
\end{proof}

\section{Teichm\"uller spaces and Beltrami differentials} 

In this section we recall some standard  definitions and results from \cite{Ahlfors}.

\subsection{Teichm\"uller spaces}
Fix a Riemann surface $Z$. Recall  the definition of the Teichm\"uller space $\T(Z)$. Consider the pairs $(Z_1,f_1)$ consisting of a Riemann surface $Z_1$ and a quasiconformal homeomorphism $f_1:Z\to Z_1$. Two such pairs $(Z_1,f_1)$ and $(Z_2,f_2)$ are said to be equivalent if the quasiconformal homeomorphism $f_2 \circ f_1^{-1}:Z_1\to Z_2$ is homotopic (relative  the boundary) to a conformal map. 

\begin{remark} The homotopy here is required to be covered by an equivariant homotopy $h_t:\Ha\rightarrow\Ha$ of the universal covering such that $h_t$ extends continuously up to the boundary of $\Ha$ and is equal to a conformal automorphism on $\pt{\Ha}$ (this requirement is automatic when $Z$ is of finite analytic type).
\end{remark}

The space of equivalence classes $[(Z_1,f_1)]$ is  called the Teichm\"uller space. The homotopy class of the homeomorphism $f_1$ is called the marking. To simplify the notation when we often write $Z_1\in \T(Z)$. However, we always assume that such $Z_1$ is marked by the homotopy class of a quasiconformal homeomorphism $f_1:Z\to Z_1$ .

\subsection{Beltrami differentials} 
A Beltrami differential on  $Z$ is a $(-1,1)$ complex form. The Banach  space of Beltrami differentials of finite supremum norm  is denoted by  $\BD(Z)$. The unit ball in $\BD(Z)$ is denoted by   $\BD_1(Z)$. Given $\mu\in \BD_1(Z)$, we let $f_\mu:Z\to Z_\mu$ denote the quasiconformal homeomorphism with the Beltrami dilatation equal to $\mu$, that is,
$$
\mu=\frac{\overline{\pt}{f_\mu}}{\pt{f_\mu}}.
$$   

Two Beltrami differentials $\mu,\nu\in \BD_1(Z)$ are equivalent $\mu \sim \nu$ if the map $f_\nu\circ f_\mu^{-1}:Z_\mu\to Z_\nu$ is homotopic to a conformal map. This can be rephrased as follows: there exists a quasiconformal homeomorphism $g:Z\to Z$, which is homotopic to the identity map relative the boundary, such that the Beltrami dilatation of the map $f_\mu\circ g$ is equal to $\nu$. 

Furthermore, we have the biholomorphic identification
\begin{equation}\label{eq-iden}
\{[\mu]: \mu \in \BD_1(Z)\} \equiv \T(Z),
\end{equation}
where $[\mu]$ denotes the equivalence class of $\mu$.
In other words, $\T(Z)$ has the structure of a complex Banach manifold which is compatible with the one on $\mathcal{BD}(Z)$.

\subsection{The tangent space of $\T(Z)$}\label{subsection-well}

By $\QD(Z)$ we denote the Banach space of holomorphic quadratic differentials on $Z$  with respect to the $L^1$ norm. 
Every $\mu \in \BD(Z)$ induces the linear functional $\lambda_\mu:\QD(Z)\to \C$ by
$$
\lambda_\mu(\varphi)=\int\limits_{Z} \mu \varphi, \quad \quad\quad \forall \varphi\in \QD(Z).
$$

Define the Banach subspace  $\BD_0(Z)\subset \BD(Z)$ by
$$
\BD_0(Z)=\{\mu\in \BD(Z): \lambda_\mu(\varphi)=0,\,\,\,\forall \varphi\in \QD(Z) \}.
$$
Note that  the quotient Banach space $\BD(Z)/\BD_0(Z)$ is isometric to the  dual of the Banach space 
$\QD(Z)$. For $\mu\in \BD(Z)$, we let $[\mu]^*$ denote its its image in the quotient $\BD(Z)/\BD_0(Z)$.

Let $T_Z\T(Z)$ denote the tangent space at  $Z$. We equip $T_Z\T(Z)$ with the Finsler norm induced by the Teichm\"uller metric. This turns $T_Z\T(Z)$ into a Banach space.

Analogously to (\ref{eq-iden}), we have  that  
the Banach space $T_Z\T(Z)$ is isometric to the quotient Banach space $\BD(Z)/\BD_0(Z)$. 
Therefore,  $T_Z\T(Z)$ is isometric to the dual of $\QD(Z)$.

\section{The map $I:\T^n_{0,0}\to \T(S_0)$}

Throughout this section we assume that $S_0$ is  $\Sigma_{0,n}$-accommodating Riemann surface, and we fix an essential (topological)  embedding   $\iota_0:\Sigma_{0,n} \hookrightarrow S_0$.  The goal of this section is to prove the following theorem.

\begin{theorem}\label{thm-1}  There exists a holomorphic map $I:\T^n_{0,0}\to \T(S_0)$ such that for every $Y\in \T^n_{0,0}$ there exists a conformal embedding $\iota:Y \to S$ homotopic to $\iota_0$ (with respect to the markings $\Sigma_{0,n}\to Y$ and $S_0\to S$). Here $S=I(Y)$. 
\end{theorem}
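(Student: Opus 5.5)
We follow the outline given in the introduction. The first step is to realise the topological embedding $\iota_0:\Sigma_{0,n}\hookrightarrow S_0$ conformally. Pick a closed subsurface $\Sigma\subset S_0$ isotopic to $\iota_0(\Sigma_{0,n})$, bounded by $n$ smooth (say analytic) Jordan curves; take it to be a slightly shrunk closed regular neighbourhood of the image of a spine, so that every boundary curve is essential (or peripheral around a puncture or hole of $S_0$) and $\iota_0$ is homotopic to the inclusion. Let $Y_0=\operatorname{int}\Sigma$ with the complex structure inherited from $S_0$. Then $Y_0$ is a genus zero surface with $n$ holes, whose boundary curves are analytic quasicircles, so $Y_0$ represents a point of $\T^n_{0,0}$. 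We mark it by a homeomorphism $\Sigma_{0,n}\to Y_0$ compatible with $\iota_0$. The inclusion $\iota_{Y_0}:Y_0\to S_0$ is then a conformal embedding homotopic to $\iota_0$. We use $Y_0$ as the base point and identify $\T^n_{0,0}=\T(Y_0)$.

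Next we define $\iota_*:\BD_1(Y_0)\to\BD_1(S_0)$ by extension by zero: $\iota_*(\mu)=\mu$ on $Y_0$ and $0$ on $S_0\setminus Y_0$. This map is complex linear and norm preserving, hence holomorphic. We then set
$$
I([\mu])=[\iota_*\mu].
$$
Given $\mu$, let $f_\mu:Y_0\to Y$ and $F:S_0\to S=S_{\iota_*\mu}$ be the solutions. The map $F\circ f_\mu^{-1}:Y\to F(Y_0)\subset S$ has Beltrami coefficient zero, so it is a conformal embedding. Its homotopy class is that of $\iota_{Y_0}$, transported by the markings $f_\mu$ and $F$, and is therefore homotopic to $\iota_0$ with respect to the markings. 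This gives the embedding $\iota$ claimed in Theorem~\ref{thm-1}, provided $I$ is well defined.

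The main obstacle is well-definedness: if $\mu\sim\nu$ on $Y_0$, we must show $\iota_*\mu\sim\iota_*\nu$ on $S_0$. Equivalence on $Y_0$ means, by definition of Teichm\"uller equivalence for surfaces with boundary (homotopy relative the boundary, with the boundary-value condition of the Remark), that there is a quasiconformal $g:Y_0\to Y_0$, homotopic to the identity relative $\partial Y_0$, with $\mu_{f_\mu\circ g}=\nu$. Since the homotopy is relative the ideal boundary, we may take $g$ to extend continuously to $\overline{Y_0}$ as the identity on the analytic boundary curves. If necessary we replace $g$ by a map in the same class that equals the identity on a collar (as done via the Douady--Earle extension; this is the delicate point to check). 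Extend $g$ by the identity to $G:S_0\to S_0$. By removability of analytic arcs, $G$ is quasiconformal. It is homotopic to the identity on $S_0$, using the relative homotopy on $Y_0$ and the stationary homotopy elsewhere, and this homotopy lifts equivariantly with identity boundary values. Moreover $\mu_{F_{\iota_*\mu}\circ G}$ equals $\nu$ on $Y_0$ and $0$ off $Y_0$, that is, it equals $\iota_*\nu$. Hence $\iota_*\mu\sim\iota_*\nu$.

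Finally, holomorphy of $I$ follows because the projections $\BD_1(Y_0)\to\T(Y_0)$ and $\BD_1(S_0)\to\T(S_0)$ are holomorphic split submersions: they admit local holomorphic sections, for instance via harmonic Beltrami differentials or Bers coordinates. Locally, then, $I$ is the composition of a holomorphic section, the holomorphic map $\iota_*$, and the holomorphic projection.
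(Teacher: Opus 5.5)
Your proposal is correct and follows essentially the same route as the paper. Both realise $\iota_0$ as the inclusion of a smoothly bounded subsurface $Y_0\subset S_0$, extend Beltrami differentials by zero, prove well-definedness by extending the map $g$ (homotopic to the identity relative to $\partial Y_0$) by the identity across the boundary curves, and take $\iota=F\circ f_\mu^{-1}$.

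The collar modification you flag as delicate is not needed. The map $g$ already restricts to the identity on $\partial Y_0$, and smooth curves are removable for quasiconformality. Your local-section argument for the holomorphy of $I$ makes explicit a step the paper takes for granted.
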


\subsection{The map $I:\T^n_{0,0}\to \T(S_0)$} 

By homotoping the embedding $\iota_0:\Sigma_{0,n}\hookrightarrow S_0$, we may assume that the image $\iota_0(\Sigma_{0,n}) \subset S_0$ is an embedded  subsurface whose boundary consists of $n$ disjoint smooth curves. This embedding equips $\Sigma_{0,n}$ with the  structure of a Riemann surface which we denote by $Y_0$, and we have  the conformal embedding $\iota_0:Y_0 \hookrightarrow S_0$.  Clearly, $Y_0$ is a Riemann surface of type $(0,0,n)$ whose boundary  consists of $n$ smooth curves.
   
The inclusion $\iota_0:Y_0\to S_0$ induces the ''inclusion" map $\iota^*:\BD_1(Y_0)\to \BD_1(S_0)$ as follows. Let $\mu\in \BD_1(Y_0)$. Then we let $\iota^*(\mu)$ equal to $\mu$ on $Y_0\subset S_0$, and equal to zero on $S_0\setminus Y_0$. The map $\iota^*$ is clearly  holomorphic.

\begin{lemma}\label{lemma-munu} Let $\mu,\nu \in \BD_1(Y_0)$, and suppose $\mu\sim \nu$ on $Y_0$. Then $\iota^*(\mu)\sim \iota^*(\nu)$ on $S_0$. 
\end{lemma}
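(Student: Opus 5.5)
The idea is to use the reformulation of the equivalence relation $\sim$ given in the subsection on Beltrami differentials. Since $\mu\sim\nu$ on $Y_0$, there is a quasiconformal homeomorphism $g:Y_0\to Y_0$, homotopic to the identity relative the boundary of $Y_0$, such that the Beltrami dilatation of $f_\mu\circ g$ equals $\nu$. I will extend $g$ by the identity to a map $G:S_0\to S_0$, i.e. $G=g$ on $Y_0$ and $G=\id$ on $S_0\setminus Y_0$. I will then show two things: first, that $G$ is a quasiconformal homeomorphism of $S_0$ homotopic to the identity relative the boundary of $S_0$ (in the strong sense of the Remark in the Teichm\"uller space subsection); second, that the Beltrami dilatation of $f_{\iota^*(\mu)}\circ G$ equals $\iota^*(\nu)$. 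Together these give $\iota^*(\mu)\sim\iota^*(\nu)$.

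\emph{Step 1: $G$ is a quasiconformal homeomorphism.} Because $g$ is homotopic to the identity relative $\pt Y_0$, it extends continuously to $\overline{Y_0}$ and restricts to the identity on the boundary curves. Hence $G$ is a homeomorphism of $S_0$. The boundary of $Y_0$ consists of finitely many smooth curves, and these are removable for quasiconformality. Since $G$ is quasiconformal on each side, $G$ is quasiconformal on $S_0$, with dilatation bounded by that of $g$.

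\emph{Step 2: $G$ is homotopic to the identity.} Let $h_t$ be the homotopy from $g$ to $\id$ on $Y_0$, fixing $\pt Y_0$ pointwise. Extending each $h_t$ by the identity on $S_0\setminus Y_0$ gives a homotopy $H_t$ from $G$ to $\id$ on $S_0$, and $H_t$ fixes $\pt S_0$. It remains to check that $H_t$ lifts to an equivariant homotopy of $\Ha$ that is the identity on $\pt\Ha$.
\begin{itemize}
\item Lift $H_t$ starting from the identity lift at $t=1$.
\item Since $\iota_0$ is essential, $\pi_1(Y_0)$ injects into $\pi_1(S_0)$. The lifted homotopy is therefore trivial over the components of the preimage of $S_0\setminus Y_0$, and over each lift of $Y_0$ it is conjugate to a lift of $h_t$.
\item Since $Y_0$ has finite topological type, the homotopy $h_t$ lifted to the universal cover of $Y_0$ moves points a uniformly bounded hyperbolic distance. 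The same then holds for the lift of $H_t$ to $\Ha$, because the covering maps are Schwarz contractions and the map is constant outside the lifts of $Y_0$.
\item A homotopy moving points a bounded hyperbolic distance extends continuously to the identity on $\pt\Ha$.
\end{itemize}

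\emph{Step 3: the dilatation computation.} On $S_0\setminus Y_0$ we have $G=\id$, so the dilatation of $f_{\iota^*(\mu)}\circ G$ is $\iota^*(\mu)=0=\iota^*(\nu)$ there. On $Y_0$, the dilatation of $f_{\iota^*(\mu)}\circ G$ is determined pointwise by the dilatation of $G=g$ and by $\iota^*(\mu)\circ g=\mu\circ g$. Composition rules for Beltrami coefficients are local, so this agrees with the dilatation of $f_\mu\circ g$, which is $\nu$. The boundary curves have measure zero and can be ignored.

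I expect the main obstacle to be Step 2, namely verifying the strong homotopy condition on $\pt\Ha$ when $S_0$ has infinite type. The route I would try first is the bounded-displacement argument sketched above, using the compactness of $\overline{Y_0}$.
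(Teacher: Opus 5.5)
Your proposal is correct and follows the paper's proof: extend $g$ by the identity off $Y_0$, get quasiconformality from removability of the smooth curves $\pt Y_0$, and check pointwise that the dilatation of $f_{\iota^*(\mu)}\circ G$ is $\iota^*(\nu)$. Your Step 2 expands the homotopy claim that the paper asserts ``by construction''. The one step to repair is its third bullet. An arbitrary homotopy rel $\pt Y_0$ need not have bounded displacement in the complete hyperbolic metric of $Y_0$, since it may push points toward the border by unbounded amounts. Boundedness in $S_0$ instead follows directly from the compactness you mention: the displacement $d_\Ha(\widetilde{H}_t(\tilde z),\tilde z)$ of the equivariant lift descends to a continuous function on the compact set $\overline{Y_0}\times[0,1]$ and vanishes off $Y_0$.
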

\begin{proof}  Set $\mu_1=\iota^*(\mu)$, and  $\nu_1=\iota^*(\nu)$. Since $\mu\sim \nu$, there exists a quasiconformal homeomorphism $g:Y_0\to Y_0$, which is homotopic to the identity on $Y_0$, and equal to the identity on $\pt{Y_0}$, such that the Beltrami dilatation of $f_\mu\circ g$ is equal to $\nu$.

Let $g_1:S_0\to S_0$ be the map which is equal to $g$ on $Y_0$, and equal to the identity map on $S_0\setminus Y_0$. By construction,  $g_1$ is a self-homeomorphism of $S_0$ which is homotopic to the identity. Therefore, $g_1$ is quasiconformal when restricted on $Y_0$, and when restricted on $S_0\setminus Y_0$. Since $\pt{Y_0}$ are smooth curves we conclude that $g_1$ is quasiconformal.  Moreover, the Beltrami dilatation of the map $f_{\mu_{1}}\circ g_1$ is equal to $\nu_1$. Thus, $\mu_1\sim \nu_1$.
\end{proof}

\subsection{Proof of Theorem \ref{thm-1}}
Lemma \ref{lemma-munu} implies that the holomorphic map $\iota^*:\BD_1(Y_0)\to \BD_1(S_0)$ respects the equivalence relation $\sim$. This induces the required holomorphic map  $I:\T(Y_0)\to \T(S_0)$ (note that $\T(Y_0)\equiv \T^n_{0,0}$). 
It remains to show that for every $Y\in \T^n_{0,0}$ we construct a conformal embedding $\iota:Y \to S$ homotopic to $\iota_0$, where $S$ denotes the marked Riemann surface underlying the point $I(Y)\in \T(S_0)$).

Let $\mu\in \BD_1(Y_0)$ be such that $[\mu]=Y$, and let $f_\mu:Y_0\to Y$ be the resulting quasiconformal homeomorphism.
Set  $\mu_1=\iota^*(\mu)$, and let $f_{\mu_{1}}:S_0\to S$ be the resulting quasiconformal homeomorphism. 
Define $\iota=f_{\mu_{1}}\circ f^{-1}_\mu$. Then $\iota:Y\to S$ is the required conformal embedding.

\section{The map $J_\epsilon:W\to \T^n_{0,0}$} The goal of this section is to construct the map $J_\epsilon:W\to \T^n_{0,0}$, and prove Theorem \ref{thm-2}.  
As in Section \ref{section-embed}, for a given   $Y\in \T^n_{0,0}$ we let $d_Y$ denote the (complete) hyperbolic metric on $Y$. By $\alpha_i \subset Y$, $1\le i\le n$,  we denote the closed geodesics (with respect to $d_Y$) homotopic to the  ends of $Y$, which we call the peripheral geodesics on $Y$.

Let $W \subset \T^0_{0,n}$ denote a bounded domain.
In this section we prove a $n$-punctured sphere $X\in W$ contains an embedded and homotopically equivalent $n$-holed sphere $Y\subset X$ which varies  holomorphically with $X$. Moreover, we can arrange that the peripheral geodesics of $Y$ are as short as we like.

\begin{definition} Let $\epsilon>0$, and   $X\in \T^0_{0,n}$. We say that  $Y\subset X$ is an 
$\epsilon$-subsurface if
\begin{enumerate}
\item $Y$ is homotopy equivalent to $X$, and  of conformal type $(0,0,n)$,
\vskip .1cm
\item $0<\l_Y(\alpha_i)\le \epsilon$, for every $1\le i\le n$. 
\end{enumerate}
\end{definition}
\begin{theorem}\label{thm-2}  Let $W \subset \T^0_{0,n}$ be a bounded domain, and let $\epsilon>0$.  There  exists a holomorphic map $J_\epsilon:W\to \T^n_{0,0}$ such that $J_\epsilon(X)=Y$ is an $\epsilon$-subsurface of $X$.
\end{theorem}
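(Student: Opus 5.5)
Normalize each $X\in\T^0_{0,n}$ as $\wh{\C}$ minus $n$ points $z_1(X),\dots,z_n(X)$, with $z_{n-2},z_{n-1},z_n$ fixed at $0,1,\infty$. The remaining coordinates $z_1,\dots,z_{n-3}$ are holomorphic functions on $\T^0_{0,n}$, since they are the lift to the Teichm\"uller space of the standard coordinates on the moduli space. The plan is to remove from $X$ small round discs of radius $r$ centred at the punctures; near $\infty$ we use the coordinate $1/z$. Set $Y_r(X)=\wh{\C}\setminus\bigcup_i \overline{D(z_i(X),r)}$. The main steps are to make the dependence of $Y_r(X)$ on $X$ holomorphic, and then to check the length bound.

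\textbf{Holomorphic dependence.} Since $W$ is bounded, $\dist_\T$ is bounded on $W$. Hence the points $z_i(X)$ stay uniformly apart and away from $\infty$ in the spherical metric, and we can choose $r_0>0$ so that for $r<r_0$ the discs $D(z_i(X),2r)$ are pairwise disjoint for all $X\in W$. Fix a base point $X_0\in W$. Define a holomorphic motion of the set $E=\bigcup_i \overline{D(z_i(X_0),r)}$ over $W$ by translation: $z\mapsto z+z_i(X)-z_i(X_0)$ on the $i$-th disc, and the identity on the disc around $\infty$ in the chart $1/z$. This motion is injective for each $X$ and holomorphic in $X$, and its parameter space $W$ is simply connected. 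I would pass to $\T^0_{0,n}$, which is a simply connected domain in $\C^{n-3}$, or to a ball containing $W$, by shrinking $W$ slightly if necessary. The extended $\lambda$-lemma in several parameters (Slodkowski, or Mitra for simply connected complex Banach manifolds) then extends the motion to a motion of $\wh{\C}$ holomorphic in $X$. We may restrict it to the complement of the discs $\overline{D(z_i(X),r)}$.

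The Beltrami coefficient $\mu_X$ of the extended map $\phi_X:Y_r(X_0)\to Y_r(X)$ depends holomorphically on $X$ as an element of $\BD_1(Y_r(X_0))$. Via the identification \eqref{eq-iden}, this gives a holomorphic map $J:W\to\T(Y_r(X_0))\equiv\T^n_{0,0}$ with $J(X)=[\phi_X]=Y_r(X)$. The marking is compatible with $X$ because the motion is isotopic to the identity through the parameter path. Moreover, $Y_r(X)$ sits in $X$ as a subsurface homotopy equivalent to $X$.

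\textbf{Short peripheral geodesics.} The annulus $A_i=\{r<|z-z_i(X)|<2r\}$ lies in $Y_r(X)$, is essential, and has modulus $\frac{1}{2\pi}\log 2$. That modulus is too small, so instead use $A_i=\{r<|z-z_i|<\rho\}$, where $\rho$ is fixed uniformly over $W$ so that these annuli are disjoint. Its modulus is $\frac{1}{2\pi}\log(\rho/r)$, which tends to $\infty$ as $r\to 0$, uniformly in $X\in W$. By monotonicity of the hyperbolic metric, $\l_{Y}(\alpha_i)\le \l_{A_i}(\text{core})=\pi/\mathrm{mod}$-type quantity, namely $2\pi^2/\log(\rho/r)$. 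Choosing $r$ small makes this at most $\epsilon$. Positivity of $\l_Y(\alpha_i)$ is automatic because $Y$ has holes, not punctures. Set $J_\epsilon=J$ for this choice of $r$.

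\textbf{Main obstacle.} The delicate step is the holomorphic extension of the motion. The single-variable $\lambda$-lemma does not directly apply to a multidimensional parameter space, so I would invoke Slodkowski's theorem on a ball (or polydisc) of parameters. To apply it I would replace $W$ by a ball in the $(z_1,\dots,z_{n-3})$-coordinates that contains $W$ and stays inside $\T^0_{0,n}$. If needed, I would use Royden's theorem that balls are Kobayashi hyperbolic together with boundedness, so that the uniform separation constants hold on this ball. An alternative that avoids extension theorems is to write $\phi_X$ explicitly: translate in $D(z_i,\rho)$ and interpolate with a cutoff function on a fixed annulus. This is not holomorphic in $X$, however, so the $\lambda$-lemma route is preferable.
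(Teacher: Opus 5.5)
Your construction follows the paper's. You move discs around the punctures holomorphically with $X$ (translations for you, the M\"obius maps $A_i(X)$ in the paper), remove them, and appeal to Slodkowski for holomorphy. Your estimate $\ell_Y(\alpha_i)\le 2\pi^2/\log(\rho/r)$ is correct and makes the paper's Claim~\ref{claim-100} explicit.

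\textbf{The gap.} It is exactly at the step you flag: there is no several-parameter extended $\lambda$-lemma.
\begin{itemize}
\item Slodkowski's theorem concerns motions parametrised by the disc $\D$. Earle--Kra--Krushkal showed that holomorphic motions over simply connected parameter spaces of dimension at least two need not extend to $\wh{\C}$.
\item Mitra's theorem says that a motion of a closed set $E$ over a simply connected Banach manifold is pulled back from the universal motion by a holomorphic map into the Teichm\"uller space $\T(E)$. Extending it to $\wh{\C}$ is then a lifting problem, and is not automatic.
\item So the holomorphic dependence of $\mu_X$, and hence of $J$, is not established as written.
\item Shrinking $W$ or passing to a ball in the $(z_1,\dots,z_{n-3})$-coordinates does not help, since there is no ball version of the theorem either. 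Such a ball is also not meaningful. The map $X\mapsto (z_1(X),\dots,z_{n-3}(X))$ is the universal covering of a configuration space, not an embedding of $\T^0_{0,n}$: $X$ and its image under a Dehn twist have the same coordinates.
\item A bounded domain need not be simply connected, so you should first reduce to a Teichm\"uller ball, as the paper does.
\end{itemize}

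\textbf{Repair.} Separate the definition of $J$ from its holomorphy; this is what the paper's one-line appeal to Slodkowski implicitly relies on.
\begin{enumerate}
\item Define $J$ as the paper does. Take quasiconformal maps $X_0\to X$ in the marking class that agree with the prescribed motion on the discs. Their restrictions form a discrete set of points of $\T^n_{0,0}$, differing by multi-twists about the boundary curves. On the contractible ball $W$, take the continuous section through $[\id]$.
\item Prove holomorphy on one-dimensional slices. For a holomorphic disc $h:\D\to W$, Slodkowski's theorem extends the pulled-back motion of the discs over $\D$. Compose this extension with a fixed representative of $J(h(0))$.
\item The resulting Beltrami coefficients on $Y_r(X_0)$ depend holomorphically on $\lambda\in\D$. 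Their classes form a continuous lift through $J(h(0))$, so by discreteness they coincide with $J\circ h$.
\item Hence $J$ is continuous and holomorphic along every complex line. In a chart of the Banach manifold $\T^n_{0,0}$, Hartogs' theorem for Banach-valued maps then gives holomorphy.
\end{enumerate}
Alternatively, use Mitra's theorem as it actually reads. It gives a holomorphic map $W\to\T(E)$. Composing with the holomorphic restriction map to the Teichm\"uller space of $\wh{\C}\setminus E$ gives $J$ directly, with no extension needed.
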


Since every bounded domain $W\subset \T^0_{0,n}$ is contained in a ball (with respect to the Teichm\"uller metric), it suffices to prove Theorem \ref{thm-2} assuming $W$ is a ball. In the remainder of this section we assume $W$ is a ball centred at $X_0\in \T^0_{0,n}$. We also fix $\epsilon>0$.

\subsection{Holomorphically varying neighbourhoods of the punctures}

There are holomorphic maps 
$p_i:\T^0_{0,n}\to \wh{\C}$ such that  
\begin{equation}\label{eq-X}
X\equiv \wh{\C}\setminus \{p_1(X),\cdots p_n(X)\},
\end{equation}
and that $p_i(X)\ne p_j(X)$ when  $i\ne j$ for every  $X\in \T^0_{0,n}$.

\begin{lemma}\label{lemma-lem-1} There are holomorphic maps $A_i:\T^0_{0,n}\to \text{Aut}(\wh{\C})$, $1\le i\le n$, such that 
the M\"obius transformation $A_i(X)$ maps the point $p_i(X_0)$ to the point  $p_i(X)$.
\end{lemma}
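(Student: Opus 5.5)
Since the M\"obius group acts simply transitively on ordered triples of distinct points, the natural plan is to write $A_i(X)$ down explicitly by a formula. That formula will be rational in the coordinates $p_j(X)$, hence holomorphic in $X$.

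First I normalise the parametrisation in (\ref{eq-X}). Post-composing with a M\"obius map that depends only on $X$ keeps the maps $p_j$ holomorphic, so I may assume $p_1(X)=0$, $p_2(X)=1$, $p_3(X)=\infty$ for every $X\in \T^0_{0,n}$. With this normalisation the Teichm\"uller space $\T^0_{0,n}$ is the universal cover of the configuration space of the points $(p_4,\dots,p_n)$. For $i\in\{1,2,3\}$ I then simply take $A_i(X)=\id$, which is constant and trivially holomorphic.

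For $i\ge 4$, the point $p_i(X)$ is never $0$, $1$ or $\infty$, and likewise $p_i(X_0)\notin\{0,1,\infty\}$. I set
$$
A_i(X)(z)=\frac{p_i(X)}{p_i(X_0)}\,z .
$$
This is a M\"obius transformation, because the scaling factor is finite and nonzero. It sends $p_i(X_0)$ to $p_i(X)$. Its coefficients depend holomorphically on $X$, since $p_i(X)$ is a nonvanishing holomorphic function into $\C$.

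To make ``holomorphic map into $\text{Aut}(\wh{\C})$'' precise, I identify $\text{Aut}(\wh{\C})$ with $PSL_2(\C)$, a complex Lie group. The map $X\mapsto \begin{pmatrix} p_i(X)/p_i(X_0) & 0\\ 0 & 1\end{pmatrix}$ is holomorphic into $GL_2(\C)$, and composing with the projection to $PGL_2(\C)$ gives a holomorphic map.

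The only point needing care is the normalisation step. I have to check that the $p_j$ from (\ref{eq-X}) can be chosen with three of them fixed at $0,1,\infty$. If the given $p_j$ are not normalised this way, I replace them by $B_X\circ p_j$, where $B_X$ is the cross-ratio M\"obius map sending $p_1(X),p_2(X),p_3(X)$ to $0,1,\infty$. The map $B_X$ depends holomorphically on $X$, so the new $p_j$ are still holomorphic.

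If I prefer not to renormalise, the same idea works in general. I pick two other indices $j,k$ and let $A_i(X)$ be the unique M\"obius map sending the triple $(p_i(X_0),p_j(X_0),p_k(X_0))$ to $(p_i(X),p_j(X),p_k(X))$. This map is given by cross-ratio formulas that are holomorphic in the distinct points. No real obstacle is expected.
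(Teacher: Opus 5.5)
Your proposal is correct and is essentially the paper's proof: the paper takes $A_i(X)$ to be the unique M\"obius map sending $(p_1(X_0),p_2(X_0),p_i(X_0))$ to $(p_1(X),p_2(X),p_i(X))$ (and the map matching $p_1,p_2,p_3$ when $i\le 3$). That is exactly your fallback, and your dilation $z\mapsto \frac{p_i(X)}{p_i(X_0)}z$ is the same three-point construction in normalised coordinates, using the auxiliary pair $p_1,p_3$ instead of $p_1,p_2$. If you present the normalised version, add that the maps for the original $p_j$ are recovered as $B_X^{-1}\circ A_i(X)\circ B_{X_0}$, which is still holomorphic in $X$.
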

\begin{proof} Let $A(X)\in \text{Aut}(\wh{\C})$ be the M\"obius transformation mapping the points $p_1(X_0), p_2(X_0), p_3(X_0),$ to the points $p_1(X), p_2(X), p_3(X),$ respectively, and set $A_1(X)=A_2(X)=A_3(X)=A(X)$. 
Furthermore, for each $i\ge 4$, there exists a unique  M\"obius transformation $A_i(X)\in \text{Aut}(\wh{\C})$  which maps the points  $p_1(X_0), p_2(X_0), p_i(X_0),$ to the points $p_1(X), p_2(X), p_i(X),$ respectively. Since the points $p_i(X)$ depend holomorphically in $X$ so do the transformations $A_i(X)$.
\end{proof}

Denote by $D_i(X_0,\delta)\subset \wh{\C}$ the  ball of radius $\delta$  (with respect to the spherical metric on $\wh{\C}$)  centred at $p_i(X_0)$. Define 
$$
D_i(X,\delta)=A_i(X)\left(D_i(X_0,\delta)\right).
$$
We think of $D_i(X,\delta)$ as a holomorphically varying neighbourhood of the puncture $p_i(X)$.

\begin{claim}\label{claim-100} There exists $\delta>0$ small enough so that
\begin{itemize}
\item   for every $X\in W$ the sets  $D_i(X,\delta)$, $1\le i \le n$, are mutually disjoint, 
\vskip .1cm
\item the surface $Y_X(\delta)\subset X$  given by
$$
Y_X(\delta)=\wh{\C}\setminus \bigcup_{i=1}^{n} D_i(X,\delta)
$$
is an  $\epsilon$-subsurface of $X$.
\end{itemize}
\end{claim}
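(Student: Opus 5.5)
The plan is a compactness argument. Since $W$ is a ball centred at $X_0$ and Teichm\"uller balls in the finite dimensional space $\T^0_{0,n}$ are relatively compact, I will replace $W$ by its closure $\overline{W}$, which is compact. The maps $p_i$ and $A_i$ are continuous, so for $X\in\overline{W}$ the points $p_i(X)$, $1\le i\le n$, stay uniformly apart in the spherical metric, and the M\"obius maps $A_i(X)$ range over a compact subset of $\text{Aut}(\wh{\C})$. In particular the $A_i(X)$ are uniformly bi-Lipschitz in the spherical metric, with some constant $M$. Hence $D_i(X,\delta)$ lies inside the spherical ball of radius $M\delta$ about $p_i(X)$. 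It also contains the spherical ball of radius $\delta/M$ about $p_i(X)$, because $A_i(X)(p_i(X_0))=p_i(X)$.

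For the first bullet, choose $\delta$ so that $2M\delta$ is less than the minimal distance between distinct $p_i(X)$ over $\overline{W}$. Then the discs $D_i(X,\delta)$ are pairwise disjoint. Each $D_i(X,\delta)$ is a round disc containing exactly one puncture, so $Y_X(\delta)$ is the sphere minus $n$ disjoint closed round discs. It is therefore of type $(0,0,n)$ and is homotopy equivalent to $X$, with the inclusion realising the marking: each boundary circle is homotopic in $X$ to a small loop around $p_i(X)$.

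For the second bullet, let $\alpha_i$ be the peripheral geodesic of $Y=Y_X(\delta)$, and bound its length by the modulus of an annulus. Let $R_0$ be less than half the minimal distance between punctures over $\overline{W}$. The round annulus
$$
U_i=\{z: \delta M< \text{dist}(z,p_i(X))< R_0\}
$$
then lies inside $Y$, avoids the other discs, and separates $p_i(X)$ from the remaining punctures. Its core curve is therefore homotopic to the $i$-th end of $Y$. By the Schwarz lemma, or by the standard estimate $\l_Y(\alpha_i)\le \pi^2/\operatorname{mod}(U_i)$ for an essential annulus $U_i\subset Y$ homotopic to $\alpha_i$, we get $\l_Y(\alpha_i)\le C/\log(R_0/(M\delta))$ for a universal $C$. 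Here I use that spherical annuli near a point are comparable to Euclidean ones in a chart, at the cost of bounded constants. The right-hand side tends to $0$ as $\delta\to0$, uniformly in $X\in\overline{W}$. Choosing $\delta$ small gives $\l_Y(\alpha_i)\le\epsilon$, and $\l_Y(\alpha_i)>0$ holds automatically since $Y$ has genuine boundary.

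The main obstacle is the uniformity in $X$, which rests on the compactness of $\overline{W}$ together with continuity of $p_i$ and $A_i$. Once that is granted, the modulus estimate is routine. I would also check that the spherical-to-Euclidean distortion near $p_i(X)$ is uniformly controlled, for instance by normalising so that no $p_i(X)$ is near $\infty$, or by working directly with spherical annuli.
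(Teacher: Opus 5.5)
Your proposal is correct and is the same compactness argument the paper has in mind. The paper's proof only says the claim follows because $\overline{W}$ is compact and $\epsilon$ is fixed, and you supply the omitted details: uniform separation of the $p_i(X)$ over $\overline{W}$, uniform spherical bi-Lipschitz bounds for the $A_i(X)$, and the Schwarz-lemma bound on the peripheral lengths via the round annuli $U_i$ (the constant $\pi^2/\operatorname{mod}$ depends on how the modulus is normalised, but you only use it up to a universal factor).
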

\begin{proof} The proof follows immediately from the facts that $W$ is compactly contained in $\T(X_0)$, and that 
$\epsilon$ is  fixed.
\end{proof}
Choose $\delta$ satisfying the conclusion of the previous claim. Let $D_i(X)=D_i(X,\delta)$, and $Y_X=Y_X(\delta)$. Then $Y_X$ is $\epsilon$-subsurface of $X\in W$.

\subsection{Proof of Theorem \ref{thm-2}} 
We  define the map $J_\epsilon:W \to \T^n_{0,0}$ as follows.

\begin{definition} Let $X\in \T^n_{0,0}$. We say that  $f\in F_X$ if
\begin{enumerate}
\item $f:X_0\to X$ is a quasiconformal homeomorphism such that  $X=[(X,f)] \in W$, 
\vskip .1cm
\item $f$ agrees with the M\"obius transformation $A_i(X)$ when restricted to the disc $D_i(X_0)$, for each $1\le i \le n$.
\end{enumerate}
\end{definition}
We let $\wh{f}:Y_{X_{0}}\to Y_X$ denote the corresponding restriction of $f\in F_X$.  Then   $[(Y_X,\wh{f})]$ is a well defined point in  $\T^n_{0,0}$. Although   $[(Y_X,\wh{f_1})]$ and  $[(Y_X,\wh{f_2})]$
may not be the same point in  $\T^n_{0,0}$ for some $f_1,f_2\in F_X$, they can only differ by a Dehn multi-twist about the peripheral geodesics $\{\gamma_i\}_{1\le i\le n}$. Thus, the set 
$\{[(Y_X,\wh{f})]\,:\, f\in F_X\}$ is a discrete subset of  $\T^n_{0,0}$.

Now, since $W$ is simply connected, and since $\{[(Y_X,\wh{f})]\,:\, f\in F_X\}\subset \T^n_{0,0}$ is discrete,
there exists  a unique continuous section $J_\epsilon:W\to \T^n_{0,0}$ such that 
$$
J_\epsilon(X)\in \{[(Y_X,\wh{f})]\,:\, f\in F_X\},
$$
and
$$
J_\epsilon(X_0)=[(Y_{X_{0}},\wh{\id})].
$$
We have already observed that $Y_X$ is an $\epsilon$-subsurface of $X$. It is holomorphic since $A_i(X)$ depends holomorphically on $X$ (this is a standard argument using Slodkowski's theorem \cite{Slodkowski} about extending holomorphic motions).

\section{Proof of Theorem \ref{theorem: embedding Teichmuller space}}\label{section-posl} Let $W_1\subset \T^0_{0,n}$ be an open ball containing the closed ball $\overline{W}$, and let $J_\epsilon:W_1\to \T^n_{0,0}$ be the corresponding map 
from Theorem \ref{thm-2}. In particular, the map $J_\epsilon$ is well defined on some neighbourhood of $\overline{W}$. Set $H_\epsilon=I\circ J_\epsilon$. Then $H_\epsilon:W_1\rightarrow \T(S)$ is a holomorphic map.  

In the remainder of the proof  we show that for a given $\delta>0$ there exists $\epsilon>0$ such that 
\begin{equation}\label{eq711}
\dist_{\T}(X_1,X_2) \le \dist_{\T}(H_\epsilon(X_1),H_\epsilon(X_2))+\delta
\end{equation}
for all pairs $(X_1,X_2) \in \overline{W} \times \overline{W}$. Since  $ \overline{W} \times \overline{W}$ is compact,  it suffices to prove that for a fixed pair $(X_1,X_2) \in \overline{W} \times \overline{W}$ the inequality (\ref{eq711}) holds when $\epsilon$ is small enough.

The proof is by contradiction.  That is, we suppose  that there exist points $X_1,X_2 \in \overline{W}$, and 
a sequence $\epsilon_k\to 0$, when $k\to \infty$,  such that 
\begin{equation}\label{eq-fin}
d_{\T}(S^1_k,S^2_k)+\delta < d_{\T}(X_1,X_2),\quad\quad \text{for every $k$},
\end{equation}
where $S^j_k=H_{\epsilon_{k}}(X_j)$, $j=1,2$. Below we derive a contradiction from (\ref{eq-fin}).

We shall prove the following lemma.
\begin{lemma}\label{lemma-fin}
\begin{equation}\label{eq-fin-2}
d_{\T}(X_1,X_2)\le \limsup\limits_{k\to \infty} d_{\T}(S^1_k,S^2_k).
\end{equation}
\end{lemma}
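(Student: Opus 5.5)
We argue by extracting a limit of the extremal maps between $S^1_k$ and $S^2_k$ and restricting it to the embedded copies of the $\epsilon_k$-subsurfaces. Passing to a subsequence, we may assume $d_\T(S^1_k,S^2_k)\to \limsup$ and this limit is finite, since $d_\T(S^1_k,S^2_k)\le d_\T(X_1,X_2)$ by (\ref{eq-fin}). Let $K_k=e^{2d_\T(S^1_k,S^2_k)}$; these are uniformly bounded by some $K$. Choose $K_k$-quasiconformal maps $F_k:S^1_k\to S^2_k$ in the marking class of $S^2_k$ relative to $S^1_k$. Let $Y^j_k=J_{\epsilon_k}(X_j)$, which is an $\epsilon_k$-subsurface of $X_j$, and let $\iota^j_k:Y^j_k\to S^j_k$ be the conformal embeddings from Theorem \ref{thm-1}. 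The goal is to show that $F_k$ induces, in the limit, a quasiconformal map $X_1\to X_2$ in the correct homotopy class with dilatation at most $\liminf K_k$. This gives $d_\T(X_1,X_2)\le \frac12\log\lim K_k$, which is the claim.

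The key step is to show that $F_k$ nearly carries $\iota^1_k(Y^1_k)$ into $\iota^2_k(Y^2_k)$ away from thin collars. The peripheral curves $\beta^j_{i,k}=\iota^j_k(\alpha^j_{i,k})$ have length at most $\epsilon_k$ by (\ref{eq-tvrd-1}), so they lie in $\Thin_{\epsilon_k}(S^j_k)[\beta]$. By Lemma \ref{lemma-mozda-5}, $F_k$ maps $\Thin_r(S^1_k)$ into $\Thin_{Cr}(S^2_k)$. Since $F_k$ respects markings, $F_k(\beta^1_{i,k})$ is homotopic to $\beta^2_{i,k}$. Now fix a compact set $E\subset X_1$. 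Points of $E$ lie in $Y^1_k$ for large $k$ (the discs $D_i(X_1,\delta)$ shrink as $\epsilon_k\to0$), and such points sit on short essential non-peripheral loops of $Y^1_k$ of bounded $d_{Y^1_k}$-length, because $Y^1_k$ converges to $X_1$ on compacta. By Lemma \ref{lemma-tvrd}, their images under $\iota^1_k$ stay outside $\Thin_{r_M/C}(S^1_k)[\beta]$. Applying Lemma \ref{lemma-mozda-5}, the images under $F_k$ stay in the component of $S^2_k\setminus \Thin_{\epsilon'}(S^2_k)[\beta]$ homotopic to $\iota^2_k(Y^2_k)$, for a threshold $\epsilon'$ chosen so that the collars $\Thin_{\epsilon'}$ contain the ends of $\iota^2_k(Y^2_k)$.

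Given this, we set $G_k=(\iota^2_k)^{-1}\circ F_k\circ\iota^1_k$, which is defined on a large compact part of $Y^1_k$. Regarding it as a map between exhausting subdomains of $X_1$ and $X_2$, it is $K_k$-quasiconformal into $X_2$ and homotopic to the marking, because all maps respect the markings from $\Sigma_{0,n}$. Normalize $G_k$ by three punctures via the spherical charts $A_i$. Then normal families for $K$-quasiconformal maps on $\wh{\C}$ give a subsequence converging locally uniformly on $X_1$ to a $\liminf K_k$-quasiconformal map $G:X_1\to X_2$, whose punctures go to the corresponding punctures. By homotopy stability under locally uniform limits, $G$ lies in the right class.

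The main obstacle is the second step: ruling out that $F_k$ drags points of the thick part of $\iota^1_k(Y^1_k)$ deep into the complement $S^2_k\setminus\iota^2_k(Y^2_k)$, or twists around the collars so that the limit changes marking. For containment I would rely on Lemma \ref{lemma-tvrd} together with the distance bound of Lemma \ref{lemma-mozda-1}, comparing $F_k$ with a Douady–Earle bilipschitz representative from Lemma \ref{lemma-mozda-2}. Any Dehn twists concentrated in the thin collars disappear in the limit, since the collars degenerate to punctures in $X_2$.
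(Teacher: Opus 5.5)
Your proposal is correct and follows the paper's argument. For compact $E\subset X_1$ you get $\iota^1_k(E)\subset Z^1_k(r)$ via Lemma \ref{lemma-tvrd} (the paper's Lemma \ref{lemma-tvrd-dok}); the extremal map then carries this into $Z^2_k(C^{-1}r)\subset\iota^2_k(Y^2_k)$ (Claim \ref{claim-posl} with Lemma \ref{lemma-elem-dok}); and $(\iota^2_k)^{-1}\circ F_k\circ\iota^1_k$ subconverges to a marked $K$-quasiconformal map $X_1\to X_2$.

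The step you flag as the main obstacle is closed exactly as in the paper: apply Lemma \ref{lemma-mozda-5} to $F_k^{-1}$ rather than to $F_k$, since what you need is that thick points go to thick points. No separate Douady--Earle comparison is then required.
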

Before that, we complete the proof of  Theorem \ref{theorem: embedding Teichmuller space} assuming  Lemma \ref{lemma-fin}.
By taking $\limsup$ of the left hand side in  (\ref{eq-fin}), we get
$$
\limsup\limits_{k\to \infty}d_{\T}(S^1_k,S^2_k)+\delta \le d_{\T}(X_1,X_2).
$$
Replacing (\ref{eq-fin-2}) in the previous inequality, we obtain
$$
d_{\T}(X_1,X_2)+\delta \le d_{\T}(X_1,X_2).
$$
This contradiction completes the proof. The remainder of this section is devoted to proving  Lemma \ref{lemma-fin}.

\subsection{The subsurface $Z_k(r)\subset S_k$} Fix $X\in \overline{W}$, and let $H_{\epsilon_k}(X)=S_k$. By $Y_k \subset X$ we denote the  $\epsilon_k$-subsurfaces   such that $J_{\epsilon_{k}}(X)=Y_k$, and by  
$\iota_k:Y_k\to S_k$ the conformal embeddings from Theorem \ref{thm-1}. As above, by $\alpha^k_i \subset Y_k$, $1\le i\le n$, we denote the peripheral geodesics of $Y_k$, and 
set $\beta^k_i=\iota_k(\alpha^k_i)$.

Assume $\epsilon_k<2r$. Then $\forall \,\,  i\in\{1,...,n\}$, we have 
$$
\l_{S_{k}}(\beta^k_i)\le \l_{Y_{k}}(\alpha^k_i)\le \epsilon_k, 
$$
so it follows that
\begin{equation}\label{eq-lay}
\beta^k_i \subset \Thin_r(S_k),\quad\quad \text{when $\epsilon_k<2r$}.
\end{equation}
Moreover, if $r<r_M$ then $\Thin_r(S_k)[\beta^k]$ consists of disjoint annuli. This justifies the following definition.
\begin{definition}\label{definition-Z} Assuming $r<r_M$, we let $Z_k(r)$ denote the component of $S_k
\setminus \Thin_r(S_k)[\beta^k]$ which is homotopy equivalent to $\iota_k(Y_k)$.
\end{definition}

\begin{lemma}\label{lemma-elem-dok} The inclusion $Z_k(r)\subset \iota_k(Y_k)$ holds 
assuming $\epsilon_k<2r<2r_M$.
\end{lemma}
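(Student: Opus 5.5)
We need to show $Z_k(r)\subset \iota_k(Y_k)$. The plan is to show that the boundary of $\iota_k(Y_k)$ lies in the thin annuli, and then argue by connectedness.

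First, by (\ref{eq-lay}) each $\beta^k_i$ lies in $\Thin_r(S_k)$, in the component $\Thin_r(S_k)[\beta^k_i]$. Since $r<r_M$, Claim \ref{claim-tvrd-11} shows this component is an annulus $U_i$ whose core is homotopic to $\beta^k_i$. Moreover $\beta^k_i$ is essential and simple, so it is a core curve of $U_i$ and separates $U_i$ into two sides.

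Next, write $Y_k=Y_k^{c}\cup \bigcup_i C_i$. Here $Y_k^c$ is the compact part of $Y_k$ bounded by the peripheral geodesics $\alpha^k_i$, and $C_i$ are the collar ends of $Y_k$ outside $\alpha^k_i$. The end $C_i$ is an annulus between $\alpha^k_i$ and the $i$-th boundary component of $Y_k$. Its image $\iota_k(C_i)$ is an annulus in $S_k$ bounded by $\beta^k_i$ and by the $i$-th boundary curve $\partial_i$ of $\iota_k(Y_k)$.

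The key geometric claim is that the whole end $\iota_k(C_i)$ lies in $\Thin_r(S_k)$, hence in $U_i$. To prove it, take $z\in C_i$. The curve through $z$ parallel to $\alpha^k_i$, namely the equidistant curve in the collar, has $d_Y$-length at most... This is where care is needed, because equidistant curves grow in length. So I would argue differently. The hyperbolic metric on the annulus $C_i$ dominates the restriction of $d_Y$. The conformal annulus $C_i$ has one boundary a geodesic of length $\le\epsilon_k$, and it is contained in the annulus cover associated to $\alpha^k_i$. In the annulus cover $\Ha/\langle A\rangle$, with $A$ hyperbolic of translation length $\ell\le\epsilon_k$, the set of points at which $A$ displaces by less than $2r$ is a neighbourhood of the axis. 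Points of $C_i$ far from $\alpha^k_i$ could escape it, so a direct argument fails.

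Instead I would use a topological route. $\iota_k(Y_k)$ is a connected subsurface of $S_k$ whose complement components are attached along the $\partial_i$. The curves $\beta^k_i$ cut $\iota_k(Y_k)$ into $\iota_k(Y_k^c)$ and the collars $\iota_k(C_i)$. The set $Z_k(r)$ is connected, it is disjoint from all $U_i\supset\beta^k_i$, and it is homotopy equivalent to $\iota_k(Y_k)$. Therefore $Z_k(r)$ lies in a single component of $S_k\setminus\bigcup_i\beta^k_i$.

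The components of $S_k\setminus\bigcup_i\beta^k_i$ are of three kinds:
\begin{enumerate}
\item the interior of $\iota_k(Y_k^c)$;
\item pieces on the far side of the $\beta^k_i$, each containing $\iota_k(C_i)$ together with the part of $S_k\setminus\iota_k(Y_k)$ attached along $\partial_i$;
\item possibly pieces merged through several $\beta^k_i$.
\end{enumerate}
The component $\iota_k(Y_k^c)$ is an $n$-holed sphere carrying the essential image of $\pi_1(Y_k)$. Any other component is attached to it only across the curves $\beta^k_i$, so it sees at most the cyclic subgroups generated by $\beta^k_i$, or a different subsurface group. Since $Z_k(r)$ is homotopy equivalent to $\iota_k(Y_k)$, compatibly with the inclusions, and $n\ge 4>1$, its $\pi_1$-image is not contained in any other component. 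Hence $Z_k(r)\subset\iota_k(Y_k^c)\subset\iota_k(Y_k)$.

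The main obstacle is the last step: making rigorous that a connected subsurface with the same (non-cyclic, non-abelian) fundamental group image as $\iota_k(Y_k)$, disjoint from the curves $\beta^k_i$, must lie on the $Y_k^c$ side of each $\beta^k_i$. I would handle it with the standard fact that essential subsurfaces bounded by disjoint essential curves are determined up to isotopy by their $\pi_1$-image. Definition \ref{definition-Z} already selects the component isotopic to $\iota_k(Y_k)$, and within $S_k\setminus\bigcup\beta^k_i$ that component is $\iota_k(Y_k^c)$.
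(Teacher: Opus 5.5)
Your argument is correct and is essentially the paper's. The paper also passes to the compact core $\wh{Y}_k$ bounded by the $\alpha^k_i$ and notes that its boundary curves $\beta^k_i$ lie in $\Thin_r(S_k)[\beta^k]$, so they miss $Z_k(r)$. It then concludes (phrased as a contradiction) that the connected set $Z_k(r)$, being homotopy equivalent to $\iota_k(\wh{Y}_k)$, must lie inside $\iota_k(\wh{Y}_k)$. Your abandoned collar/thin-part detour is unnecessary. Your $\pi_1$ justification for excluding the outer complementary components is sound: a non-cyclic vertex group of the splitting along the $\beta^k_i$ cannot be conjugate into another vertex group. It simply makes explicit a step the paper asserts in one line.
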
 
\begin{proof} Let $\wh{Y}_k\subset Y_k$ be the subsurface whose boundary consists of the geodesics $\alpha^k_i$, $1\le i\le n$. Thus, $\wh{Y}_k$ is a surface with  geodesic boundary, and is homotopy equivalent to $Y_k$. It suffices to  show  $Z_k(r)\subset \iota_k(\wh{Y}_k)$.

Proof by contradiction. Assume $Z_k(r)$ is not contained in $\iota_k(\wh{Y}_k)$. Since $\iota_k(\wh{Y}_k)$ and $Z_k(r)$ are  homotopy equivalent essential subsurfaces, it follows that the boundary of $\iota_k(\wh{Y}_k)$ intersects $Z_k(r)$. But the boundary of $\iota_k(\wh{Y}_k)$ consists of the curves $\{\beta^k_i\}_{1\le i\le n}$,  so we conclude that some $\beta^k_i$ intersect $Z_k(r)$. From (\ref{eq-lay}) we know that $\beta^k_i \subset \Thin_r(S_k)$ which is a contradiction since $\beta^k_i$ intersect $Z_k(r)$.

\end{proof}

The following claim follows directly from the assumption that $\epsilon_k\to 0$ when $k\to \infty$.
\begin{claim}\label{claim-vise} Let $E\subset X$ be a compact set. There exits $k_0\in \N$ such that
$E\subset Y_k$ for every  $k>k_0$.
Moreover, we have 
\begin{equation}\label{eq-dok}
\lim\limits_{k\to \infty} d_{Y_{k}}(z,w)=d_{X}(z,w),
\end{equation}
for every $z,w\in X$. 
\end{claim}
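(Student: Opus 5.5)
The plan is to prove both statements directly from the construction of $Y_k$ in the previous section. Recall that $Y_k=Y_X(\delta_k)=\wh{\C}\setminus\bigcup_i D_i(X,\delta_k)$, where $\delta_k$ is chosen via Claim \ref{claim-100} for $\epsilon=\epsilon_k$.

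\emph{Step 1: choosing $\delta_k$.} I first arrange that $\delta_k$ is decreasing with $\delta_k\to 0$. This is allowed because shrinking $\delta$ keeps the conclusions of Claim \ref{claim-100}:
\begin{itemize}
\item the discs stay disjoint;
\item $Y_X(\delta')\supset Y_X(\delta)$ for $\delta'<\delta$, so by the Schwarz lemma (\ref{eq-tvrd-1-01}) the peripheral geodesic lengths can only decrease.
\end{itemize}
So for each $k$ we may replace $\delta_k$ by $\min\{\delta_k,\delta_{k-1},1/k\}$.

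With this choice, $D_i(X,\delta_k)=A_i(X)(D_i(X_0,\delta_k))$ is the image under a fixed M\"obius map of spherical balls shrinking to $p_i(X_0)$. Hence these discs shrink to the point $p_i(X)$. It follows that the $Y_k$ form an increasing sequence of open sets exhausting $X=\wh{\C}\setminus\{p_1(X),\dots,p_n(X)\}$. A compact set $E\subset X$ has positive spherical distance from the punctures, so $E\subset Y_k$ for all large $k$. This gives the first assertion.

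\emph{Step 2: convergence of the hyperbolic metrics.} Write $\rho_k$ and $\rho_X$ for the hyperbolic densities of $Y_k$ and $X$.

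Lower bound: since $Y_k\subset Y_{k+1}\subset X$, the Schwarz lemma gives $\rho_k\ge\rho_{k+1}\ge\rho_X$ on $Y_k$. In particular $d_{Y_k}(z,w)\ge d_X(z,w)$.

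Upper bound: fix $z\in X$ and let $\pi:\D\to X$ be a universal covering with $\pi(0)=z$. For $r<1$, the set $\pi(\{|\zeta|\le r\})$ is compact in $X$, so by Step 1 it lies in $Y_k$ for $k$ large. The map $\zeta\mapsto\pi(r\zeta)$ then sends $\D$ holomorphically into $Y_k$. By the Schwarz lemma,
$$
\rho_k(z)\le \rho_X(z)/r .
$$
The same argument works uniformly for $z$ in a compact set $K$, using a fixed $r$ and the compact set $\bigcup_{z\in K}\pi_z(\{|\zeta|\le r\})$. Here one can take coverings $\pi_z$ obtained by post-composing a single covering with deck-compatible M\"obius maps, so this union is compact. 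Hence $\rho_k\to\rho_X$ uniformly on compact subsets of $X$.

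\emph{Step 3: convergence of distances.} Given $z,w\in X$, let $\gamma$ be a $d_X$-geodesic from $z$ to $w$. It is compact, so $\gamma\subset Y_k$ for large $k$, and $\rho_k\to\rho_X$ uniformly on $\gamma$. Therefore
$$
\limsup_k d_{Y_k}(z,w)\le \lim_k \int_\gamma\rho_k=\int_\gamma\rho_X=d_X(z,w).
$$
Combined with the lower bound from Step 2, this gives (\ref{eq-dok}).

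The only delicate point I expect is the locally uniform upper bound in Step 2. Pointwise convergence already suffices via dominated convergence along $\gamma$: the densities $\rho_k$ are monotone decreasing, and on $\gamma$ they are bounded by $\rho_{k_0}$, which is continuous on the compact set $\gamma$. So if the uniformity argument becomes awkward, I would fall back on this.
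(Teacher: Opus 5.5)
Your Steps 2 and 3 are correct. The lower bound $d_{Y_k}\ge d_X$ comes from $Y_k\subset X$, the upper bound $\rho_{Y_k}\le \rho_X/r$ comes from $\zeta\mapsto\pi(r\zeta)$ once the relevant compact set lies in $Y_k$, and you then integrate along a $d_X$-geodesic. This is the standard convergence argument that the paper's one-line justification leaves implicit.

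The gap is in Step 1. The surfaces $Y_k=J_{\epsilon_k}(X)$ are not yours to re-choose. They are determined by the maps $H_{\epsilon_k}=I\circ J_{\epsilon_k}$ for which the contradiction hypothesis (\ref{eq-fin}) is assumed, and each $J_{\epsilon_k}$ was built with some $\delta_k$ satisfying Claim \ref{claim-100} for $\epsilon_k$. Replacing $\delta_k$ by $\min\{\delta_k,\delta_{k-1},1/k\}$ produces different subsurfaces, hence different $S^j_k$, for which (\ref{eq-fin}) is no longer known. You would then be proving Lemma \ref{lemma-fin} for a sequence other than the one the contradiction argument needs. Tellingly, your proof never uses $\epsilon_k\to 0$, which is the only input the paper cites. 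Without it the claim is false: a fixed $\delta$ gives a fixed $Y_X(\delta)$, which misses compact sets near the punctures.

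The repair uses the monotonicity you already observed, run in the opposite direction. Suppose $\delta_k\ge\delta_*>0$ along a subsequence. Then $Y_k\subset Y_X(\delta_*)$, and the Schwarz lemma gives $\ell_{Y_k}(\alpha^k_i)\ge \ell_{Y_X(\delta_*)}(\alpha_i)>0$, where $\alpha_i$ is the peripheral geodesic of $Y_X(\delta_*)$ in the same class. This contradicts $\ell_{Y_k}(\alpha^k_i)\le\epsilon_k\to 0$. So $\delta_k\to 0$ automatically, the discs $D_i(X,\delta_k)=A_i(X)(D_i(X_0,\delta_k))$ shrink to $p_i(X)$, and every compact $E\subset X$ lies in $Y_k$ for large $k$.

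You do not need the $Y_k$ to be nested. The lower bound uses only $Y_k\subset X$ and the upper bound uses only this exhaustion. Only your dominated-convergence fallback relied on monotonicity, and it is unnecessary.

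Alternatively, you may impose $\delta(\epsilon)\le\epsilon$ when defining the family $J_\epsilon$, before the contradiction argument starts. This is harmless since Theorem \ref{theorem: embedding Teichmuller space} only asks for some $H$. But it must be done at the level of the construction, not after the sequence $\epsilon_k$ has been fixed.

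A small correction in Step 2: take $\pi_z=\pi\circ M_z$ with $M_z\in\Aut(\D)$ and $M_z(0)$ a lift of $z$ (pre-composition, not post-composition). Then $\bigcup_{z\in K}\pi_z(\{|\zeta|\le r\})$ is simply the closed $d_X$-neighbourhood of $K$ of radius $d_{\D}(0,r)$, which is compact because $d_X$ is complete.
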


\begin{lemma}\label{lemma-tvrd-dok} Let  $E\subset X_j$ be a compact subset.
There exist  $r_0<r_M$, and $k_0 \in \N$, such that for every $r<r_0$ the inclusion
$\iota_k(E)\subset Z_k(r)$ holds for  $k>k_0$ (the constants $r_0$ and $k_0$ depend on $E$).
\end{lemma}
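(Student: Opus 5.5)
Since $Z_k(r)$ is defined as the component of $S_k\setminus\Thin_r(S_k)[\beta^k]$ homotopy equivalent to $\iota_k(Y_k)$, it suffices to show two things for suitable $r_0$ and $k_0$. First, $\iota_k(E)$ is disjoint from $\Thin_r(S_k)[\beta^k]$. Second, $\iota_k(E)$ lies in the correct complementary component. The plan is to get the first from Lemma \ref{lemma-tvrd} and the second from a connectivity argument.

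First we enlarge $E$. Replace $E$ by a compact, connected set $E'\supset E$ in $X$ such that every point $z\in E'$ lies on a closed curve $\gamma_z\subset E'$ that is homotopically nontrivial and not homotopic to a power of a peripheral curve. For instance, take $E'$ to be a large compact subsurface of $X$, a sphere with $n$ holes carved out of $X$ by small horocyclic neighbourhoods of the cusps, together with a finite family of closed curves filling it. By compactness, the $d_X$-lengths of these curves $\gamma_z$, together with the $d_X$-distance from $z$ to $\gamma_z$, are bounded by some $D$.

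Claim \ref{claim-vise} gives $E'\subset Y_k$ for $k>k_0$, and (\ref{eq-dok}) together with compactness gives uniform control of the metrics, so that $\l_{Y_k}(\gamma_z)\le 2D'$ for all $z\in E'$ once $k$ is large. The convergence (\ref{eq-dok}) is pointwise; to make it uniform on $E'$ one notes that the metrics $d_{Y_k}$ decrease monotonically to $d_X$ as the $Y_k$ exhaust $X$, and then applies Dini's theorem. A curve in $Y_k$ that is essential and non-peripheral in $X$ is also essential and non-peripheral in $Y_k$, since $Y_k\subset X$ is a homotopy equivalence.

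Next, Lemma \ref{lemma-tvrd}, with constant $C=C(D')$, shows that $\iota_k(\gamma_z)$ misses $\Thin_r(S_k)[\beta^k]$ whenever $r<r_0:=r_M/C$. To handle the points of $E'$ themselves rather than only the curves, we use the Schwarz inequality (\ref{eq-tvrd-1-01}): every point $\iota_k(z)$ is within distance $D'$ of $\iota_k(\gamma_z)$. Either we choose the $\gamma_z$ to pass through $z$, or we apply the argument of Lemma \ref{lemma-tvrd} via Lemma \ref{lemma-mozda-3} with a slightly enlarged constant. Either way we get $\iota_k(E')\cap\Thin_r(S_k)[\beta^k]=\emptyset$ for $r<r_0$.

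Finally, $E'$ is connected, so $\iota_k(E')$ lies in a single component of $S_k\setminus\Thin_r(S_k)[\beta^k]$. This component contains $\iota_k$ of a filling subsurface of $Y_k$, and $\iota_k$ is essential. Hence the component carries the full fundamental group of $\iota_k(Y_k)$, so it must be the component homotopy equivalent to $\iota_k(Y_k)$, namely $Z_k(r)$.

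I expect the main obstacle to be making the length bounds uniform in $k$, which is exactly the passage from pointwise to uniform convergence in (\ref{eq-dok}) discussed above. The construction of $E'$ with curves through every point is routine.
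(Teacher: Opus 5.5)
Your proposal is correct and follows the paper's route: apply Lemma \ref{lemma-tvrd} to bounded-length non-peripheral closed curves through the points of $E$. Your uniform choice of curves and your connectedness argument also supply two steps the paper passes over quickly: uniformity of $r_0$ over $E$, and the assertion that a point of $\iota_k(Y_k)$ outside $Z_k(r)$ must lie in $\Thin_r(S_k)[\beta^k]$ rather than in another complementary component.

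One small caveat: the $Y_k$ need not be nested as constructed, so Dini's theorem does not directly apply. It is cleaner to get the uniform length bound from the Schwarz--Pick comparison $\ell_{Y_k}(\gamma_z)\le \ell_U(\gamma_z)$. Here $U\supset E'$ is a fixed relatively compact open subsurface of $X$, which is contained in $Y_k$ for all large $k$ by Claim \ref{claim-vise}.
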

\begin{proof} Since $E$ is compact it suffices to prove that for a fixed $z\in E$ the relation $\iota_k(z)\subset Z_k(r)$ holds for $k$ large enough.

Let $\gamma \subset X$ be a  homotopically non-trivial closed curve  which is  not homotopic to a power of a curve which bounds a puncture on $X$, and such that $z\in \gamma$. From Claim \ref{claim-vise} 
we find $k_0\in \N$ such  that $\gamma\subset Y_k$ for every $k>k_0$.
Set $D=1+\l_{X_{j}}(\gamma_j)$. Then by (\ref{eq-dok}) we see that that 
$\l_{Y_{k}}(\gamma)<D$ for $k$ large enough.

Let $C=C(D)$ be the constant from Lemma \ref{lemma-tvrd}, and set $r_0=\frac{r_M}{C}$.
If $\iota_k(z) \notin Z_k(r)$ for some $r<r_0$, then $\iota_k(z) \in \Thin_r(S)[\beta^k]$. Thus, the curve $\iota_k(\gamma)$ intersects  $\Thin_r(S)[\beta^k]$, and from Lemma \ref{lemma-tvrd} we find that  $r\ge \frac{r_M}{C}=r_0$. This proves that $\iota_k(z) \in Z_k(r)$ for every $r<r_0$, and every $k>k_0$.
\end{proof}

\subsection{Proof of Lemma \ref{lemma-fin}} Let 
\begin{equation}\label{eq-fin-3}
K_k=\exp\big(d_{\T}(S^1_k,S^2_k)\big),
\end{equation}
and set
\begin{equation}\label{eq-fin-3-11}
K=\limsup_{k\to \infty} K_k.
\end{equation}
Then there exists a $K_k$-quasiconformal homeomorphism $f_k:S^1_k\to S^2_k$. 
Denote by  $Y^j_k \subset X_j$ the  subsurfaces such that $J_{\epsilon_{k}}(X_j)=Y^j_k$, and  by $\iota^j_k:Y^j_k\to S^j_k$ the corresponding conformal embeddings, for $j=1,2$. 
The surfaces $Y^1_k$ and $Y^2_k$ may  not be the domain and the range of the map $(\iota^2_k)^{-1}\circ f_k\circ \iota^1_k$. However, we can slightly trim these subsurfaces so that this map is well defined. 

Suppose $\epsilon_k<2r<2r_M$. By $Z^j_k(r)\subset S^j_k$ we denote the corresponding subsurface from Definition \ref{definition-Z}. Define
$$
G^1_k(r)=(\iota^1_k)^{-1}(Z^1_k(r)).
$$

\begin{claim}\label{claim-posl} For every $0<r<r_M$, there exists $k_0\in \N$ such that the map $g_k=(\iota^2_k)^{-1}\circ f_k\circ \iota^1_k$ is well defined  on $G^1_k(r)$ for $k>k_0$.
\end{claim}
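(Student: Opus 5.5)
The plan is to show that for $k$ large the image $f_k(Z^1_k(r))$ lies inside $\iota^2_k(Y^2_k)$. Once this holds, $g_k=(\iota^2_k)^{-1}\circ f_k\circ \iota^1_k$ is defined on $G^1_k(r)=(\iota^1_k)^{-1}(Z^1_k(r))$. Also, $G^1_k(r)$ is a genuine subset of $Y^1_k$ as soon as $\epsilon_k<2r$, by Lemma \ref{lemma-elem-dok} applied to $Z^1_k(r)\subset \iota^1_k(Y^1_k)$.

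First I obtain a uniform bound on the dilatations $K_k$. Since $H_{\epsilon_k}$ is holomorphic and the Teichm\"uller metric is the Kobayashi metric, the Schwarz lemma gives $d_\T(S^1_k,S^2_k)\le d_\T(X_1,X_2)$. Hence $K_k\le K_0:=\exp(d_\T(X_1,X_2))$ for all $k$. Let $C=C(K_0)\ge 1$ be the constant of Lemma \ref{lemma-mozda-5}.

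Next I apply Lemma \ref{lemma-mozda-5} to $f_k^{-1}$, which is also $K_0$-quasiconformal. This gives $f_k^{-1}(\Thin_{r/C}(S^2_k))\subset \Thin_r(S^1_k)$, i.e. $f_k$ maps $S^1_k\setminus\Thin_r(S^1_k)$ into $S^2_k\setminus \Thin_{r/C}(S^2_k)$. The set $Z^1_k(r)$ is a component of $S^1_k\setminus \Thin_r(S^1_k)[\beta^{1,k}]$; I may assume it also avoids the rest of the $r$-thin part, or else shrink $r$. Either way, it suffices that the connected set $f_k(Z^1_k(r))$ misses $\Thin_{r/C}(S^2_k)[\beta^{2,k}]$, which the inclusion above provides. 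Hence $f_k(Z^1_k(r))$ lies in a single component of $S^2_k\setminus \Thin_{r/C}(S^2_k)[\beta^{2,k}]$. Here $r/C<r_M$, and I take $k$ so large that $\epsilon_k<2r/C$, so these annuli are defined and disjoint by (\ref{eq-lay}).

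The main obstacle is to identify that component as $Z^2_k(r/C)$. For this I use the markings. By Theorem \ref{thm-1}, both $\iota^1_k$ and $\iota^2_k$ are homotopic to $\iota_0$ with respect to the markings. The map $f_k$ is homotopic to the change of marking $S^1_k\to S^2_k$, because $f_k$ realises the Teichm\"uller distance between marked points. Hence $f_k\circ\iota^1_k$ is homotopic to $\iota^2_k$ up to the markings of $Y^1_k$, $Y^2_k$ by $\Sigma_{0,n}$. Take a non-peripheral essential closed curve $\gamma$ in $Z^1_k(r)$; it exists because $Z^1_k(r)$ is homotopy equivalent to an $n$-holed sphere with $n\ge 4$. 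Its image $f_k(\gamma)$ is then homotopic to a non-peripheral curve in $\iota^2_k(Y^2_k)$. Every other component of the complement of the annuli is an essential subsurface disjoint, up to homotopy, from $\iota^2_k(Y^2_k)$, so no such curve can lie there. Therefore $f_k(Z^1_k(r))\subset Z^2_k(r/C)$.

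Finally, Lemma \ref{lemma-elem-dok}, applied with $r/C$ in place of $r$ (valid since $\epsilon_k<2r/C<2r_M$), gives $Z^2_k(r/C)\subset \iota^2_k(Y^2_k)$. Taking $k_0$ with $\epsilon_k<2r/C$ for all $k>k_0$ proves the claim.
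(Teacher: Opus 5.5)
Your route is the paper's: a uniform dilatation bound, Lemma \ref{lemma-mozda-5} applied to $f_k^{-1}$ to get $f_k(Z^1_k(r))\subset Z^2_k(r/C)$, and then Lemma \ref{lemma-elem-dok} at scale $r/C$. The paper asserts the middle inclusion in one line, and your marking argument for identifying the component supplies what it leaves implicit. One step before that, however, does not hold up as written.

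The inclusion $f_k\big(S^1_k\setminus\Thin_r(S^1_k)\big)\subset S^2_k\setminus\Thin_{r/C}(S^2_k)$ only controls points of $Z^1_k(r)$ that lie outside \emph{all} of $\Thin_r(S^1_k)$. But $Z^1_k(r)$ is only defined to avoid the $\beta$-collars, so it may contain collars of short non-peripheral geodesics of $\iota^1_k(Y^1_k)$. Your remedy, ``or else shrink $r$'', needs a lower bound, uniform in $k$, on the $S^1_k$-lengths of non-peripheral curves in $\iota^1_k(Y^1_k)$. Shrinking $r$ separately for each $k$ does not work, since you then need $\epsilon_k<2r_k/C$. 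Nothing available gives such a bound: (\ref{eq-tvrd-1-01}) bounds $S_k$-lengths only from above, and Lemma \ref{lemma-tvrd} only concerns the $\beta$-collars.

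The repair uses the markings you already invoke, applied componentwise rather than globally.
\begin{itemize}
\item $f_k\circ\iota^1_k$ is homotopic to $\iota^2_k$ composed with a change of marking that preserves the labelling of the boundary curves, so $f_k^{-1}(\beta^{2,k}_i)$ is freely homotopic to $\beta^{1,k}_i$.
\item The set $f_k^{-1}\big(\Thin_{r/C}(S^2_k)[\beta^{2,k}_i]\big)$ is connected, lies in $\Thin_r(S^1_k)$ by Lemma \ref{lemma-mozda-5}, and contains this curve.
\item For $r<r_M$, every closed curve in a component of $\Thin_r(S^1_k)$ is homotopic to a power of that component's core (cf.\ Claim \ref{claim-tvrd-11}), and distinct components have non-homotopic primitive cores. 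So this set lies in $\Thin_r(S^1_k)[\beta^{1,k}_i]$.
\end{itemize}
Hence $f_k(Z^1_k(r))$ misses $\Thin_{r/C}(S^2_k)[\beta^{2,k}]$ with no hypothesis on the rest of the thin part, and your curve $\gamma$ then places it in $Z^2_k(r/C)$.

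A smaller slip: $H_{\epsilon_k}$ is defined only on $W_1$, so the Schwarz lemma gives $d_\T(S^1_k,S^2_k)\le d^{W_1}_K(X_1,X_2)$, not $\le d_\T(X_1,X_2)$. This is still uniform in $k$. Also, under the contradiction hypothesis (\ref{eq-fin}) your bound holds directly; the paper simply uses $(K+1)$-quasiconformality with $K=\limsup K_k$.
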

\begin{proof} Let $k_0$ be such that  the map $f_k$ is  $(K+1)$-quasiconformal when $k>k_0$.
Let $C=C(K+1)$ be the constant from  Lemma \ref{lemma-mozda-5}. By applying (\ref{eq-mozda-2}) to the inverse map $f^{-1}_k$  we conclude that 
\begin{equation}\label{eq-mozdana}
f_k\big(Z^{1}_{k}(r)  \big) \subset Z^{2}_{k}(C^{-1}r),
\end{equation}
when $k>k_0$. On the other hand, from Lemma \ref{lemma-elem-dok} we conclude that 
$$
Z^2_k(C^{-1}r)\subset \iota^2_k(Y^2_k),
$$
when $\epsilon_k<2C^{-1}r<2r_M$. Thus, for every $z\in G^1_k(r)$, the point $(f_k\circ \iota^1_k)(z)$ lies in the image  $\iota^2_k(Y^2_k)$, and we are done.
\end{proof}

Suppose $E\subset X_1$ is a compactly contained domain. Let $r_0$ be the constant from
Lemma \ref{lemma-tvrd-dok}. Then, by the same lemma we find that for every $0<r<r_0$  the inclusion  $E\subset G^1_k(r)$ holds  for   $k$ is large enough. We  conclude that the map $g_k:E\to g_k(E)$ is a well defined $K_k$-quasiconformal homeomorphism for every large enough $k$.. 

Since this is true for every $E\subset X_1$, it follows that  $g_k$ converges to a $K$-quasiconformal homeomorphism $g:X_1\to  X_2$ which respects the markings.  Thus, 
$$
d_{\T}(X_1,X_2)\le \log K= \log \big( \limsup_{k\to \infty} K_k\big).
$$
Combining this with (\ref{eq-fin-3}) proves Lemma \ref{lemma-fin}.

\section{Teichm\"uller discs }  In  \cite{Markovic} we  developed the general framework for analysing when  $d_C=d_\T$ on Teichm\'uller discs on Riemann surfaces of finite analytic type which are generated by Jenkins-Strebel holomorphic quadratic differentials. The purpose of the following three sections is to see that the exact  framework works   for Riemann surfaces of finite topological type as well.

Let $S$ denote a topologically finite Riemann surface (every such surface arises  by removing a finite number of points and discs from a closed surface). By $\QMod(S)$ we denote the group of  quasiconformal self-homeomorphisms of $S$ up to homotopy (relative to the boundary at infinity).
Then $\QMod(S)$ naturally acts on $\T(S)$ as the group of biholomorphic  automorphisms.

\subsection{Teichm\"uller discs} 

A  holomorphic  quadratic differential $\varphi \in \QD(S)$ induces a holomorphic map from the upper half plane $\Ha$  to $\Belt_1(S)$ given by 
$\lambda \to \left( \frac{\imu-\lambda}{\imu+\lambda} \right) \frac{| \varphi |}{\varphi}$. In turn this defines the holomorphic embedding  $\tau^{\varphi}:\Ha \to \T(S)$ by letting 
\begin{equation}\label{ponovimo}
\tau^{\varphi}(\lambda)=\left[  \left( \frac{\imu-\lambda}{\imu+\lambda} \right) \frac{| \varphi |}{\varphi}   \right] \in \Teich(S).
\end{equation}
Note that $\tau^{\varphi}(\imu)=S$. We refer to the map $\tau^{\varphi}$ as the Teichm\"uller disc.

Let $S(\lambda)=\tau^{\varphi}(\lambda)$. The induced quasiconformal map $S \to S(\lambda)$, with the dilatation 
$ \left( \frac{\imu-\lambda}{\imu+\lambda} \right) \frac{| \varphi |}{\varphi}$  is affine in local coordinates corresponding to $\varphi$ and the corresponding terminal quadratic differential $\varphi(\lambda) \in \QD\big(S(\lambda)\big)$. If $z$ is the local parameter on $S$ such that $\varphi=dz^2$ then the quasiconformal map is of the form 
$z \, \to  \, x+\lambda y$, where $z=x+y\imu$.

\subsection{Stabilisers and Jenkins-Strebel differentials}  We define the  subgroup $\Stab(\tau^{\varphi})<\Aut(\Ha)$, which stabilises  the Teichm\"uller disc $\tau^{\varphi}$, as follows.  Let $A\in Aut(\Ha)$. Then  $A\in \Stab(\tau^{\varphi})$ if the Riemann surfaces underlying $\tau^{\varphi} \circ A$ and  $\tau^{\varphi}$ agree.  The induced monomorphism  $\tau^{\varphi}_{*}: \Stab(\tau) \to \QMod(S)$ detects the difference between the markings of $\tau^{\varphi} \circ A$, and $\tau^{\varphi}$, respectively.

\begin{definition} 
A differential $\varphi \in \QD(S)$ is called a Jenkins-Strebel differential if $\varphi$ induces a decomposition of  $S$ into a finite number of annuli $\Pi_j$, $j=1,...,k$,   foliated by closed horizontal trajectories of $\varphi$.  
\end{definition}

\begin{remark} Suppose that $\gamma\subset S$ is a peripheral geodesic. If $\varphi \in \QD(S)$ is a Jenkins-Strebel differential then one of the annuli $\Pi_j$ is homotopic to $\gamma$.
\end{remark}

Let $\gamma_1,...\gamma_k$ be a collection of disjoint   simple closed curves on $S$ homotopic to $\Pi_j$'s. By $m_j$ we denote the conformal modulus of $\Pi_j$. If  $m_j$'s have rational ratios  we call $\varphi$ a rational Jenkins-Strebel differential.  By $T_{\gamma_{j}} \in \QMod(S)$ we denote  the Dehn twist about $\gamma_j$.  The following lemma  is well known  (see Lemma 9.7 in  \cite{McMullen} or Proposition 2.1 in \cite{Markovic}).
\begin{lemma}\label{lemma-varphi} Let $\varphi$ be  a  rational Jenkins-Strebel differential and let
\begin{equation}\label{t}
t=\lcm\{m^{-1}_1,...,m^{-1}_k \}, 
\end{equation}
where $\lcm$ stands for the lowest common multiple.  Set  $A_t(\lambda)=\lambda+t$, $\lambda \in \Ha$.  Then $A_t \in \Stab(\tau^{\varphi})$, and 
$\tau^{\varphi}_{*}(A_t)=T \in \QMod(S)$ is the product of the (commuting) Dehn twists $T=T^{n_{1}}_{\gamma_{1}} \cdot \cdot \cdot T^{n_{k}}_{\gamma_{k}} $, where $n_{j}=m_j t$.
\end{lemma}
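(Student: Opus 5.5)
We prove Lemma \ref{lemma-varphi} by exhibiting, for the translation $A_t$, an explicit quasiconformal self-map of $S$ that realises the change of marking between $\tau^{\varphi}\circ A_t$ and $\tau^{\varphi}$. We then check that it is homotopic (relative to the boundary at infinity) to the stated product of Dehn twists.

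\textbf{Normalisation.} Work in the natural coordinates of $\varphi$ on each annulus $\Pi_j$. After a harmless rescaling of $\varphi$ (which does not change the disc $\tau^{\varphi}$), $\Pi_j$ is the flat cylinder $\{x+y\imu : 0<y<h_j\}/(x\sim x+c_j)$, with circumference $c_j$, height $h_j$, and modulus $m_j=h_j/c_j$. For $\lambda\in\Ha$, the surface $S(\lambda)=\tau^{\varphi}(\lambda)$ is obtained from $S$ by applying the affine map $z\mapsto x+\lambda y$ on each cylinder. So $S(\lambda)$ is the union of the images of the $\Pi_j$: parallelogram cylinders spanned by $c_j$ and $\lambda h_j$, glued along the critical graph exactly as on $S$. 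Since $\varphi$ is Jenkins--Strebel, the union of the closures of the $\Pi_j$ is all of $S$ up to a measure zero set. For topologically finite $S$ with boundary this includes the peripheral annuli, and the identification at the ideal boundary is unchanged because the map is affine along the horizontal direction.

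\textbf{Shear map and markings.} The surface $S(\lambda+t)$ is obtained by $z\mapsto x+(\lambda+t)y=(x+ty)+\lambda y$. Define $F:S\to S$ on each $\Pi_j$ by the shear $F(x+y\imu)=(x+ty)+y\imu$. On the lower boundary $y=0$ the map $F$ is the identity. On the upper boundary $y=h_j$ it is the translation by $th_j=t\,m_jc_j=n_jc_j$, where $n_j=m_jt$ is an integer by (\ref{t}); this translation is the identity on the circle $\R/c_j\Z$. Hence $F$ fits continuously across the critical graph, is affine on each cylinder, and is therefore quasiconformal. We have $f_{\lambda+t}=f_\lambda\circ F$ as maps $S\to S(\lambda+t)=S(\lambda)$, where $f_\lambda$ denotes the affine map $x+y\imu\mapsto x+\lambda y$ with dilatation $\frac{\imu-\lambda}{\imu+\lambda}\frac{|\varphi|}{\varphi}$. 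So the underlying Riemann surfaces of $\tau^{\varphi}(\lambda+t)$ and $\tau^{\varphi}(\lambda)$ coincide, which gives $A_t\in\Stab(\tau^{\varphi})$, and the two markings differ by the class of $F$.

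\textbf{Identifying the twist.} On $\Pi_j$ the shear rotates the top circle $n_j$ full times relative to the bottom circle and fixes the complement of the open cylinders. It is therefore isotopic, relative to the rest of $S$, to $T_{\gamma_j}^{n_j}$. The twists commute because the curves $\gamma_j$ are disjoint. Thus $\tau^{\varphi}_*(A_t)=[F]=T_{\gamma_1}^{n_1}\cdots T_{\gamma_k}^{n_k}$.

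The main point requiring care is the infinite-dimensional or bordered setting. There the homotopy must be relative to the boundary at infinity, lifting to an equivariant homotopy that is a conformal automorphism on $\pt\Ha$. We handle this by noting that $F$ is the identity off the cylinders and that the isotopy to $F$ can be taken compactly supported in $\bigcup_j\Pi_j$ away from the ideal boundary. For peripheral cylinders one must check that $F$ is the identity near the boundary curve, by choosing the side $y=0$ adjacent to the ideal boundary; in that case the twist about a peripheral curve is still a nontrivial element of $\QMod(S)$. The equality of the pairs $(S(\lambda),f_{\lambda+t})$ and $(S(\lambda),f_\lambda\circ F)$ can then be read off directly.
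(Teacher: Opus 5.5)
The paper does not prove this lemma itself; it quotes it from \cite{McMullen} (Lemma 9.7) and \cite{Markovic} (Proposition 2.1), and your argument is the standard proof given there, so it is correct. That argument is the cylinderwise shear $x+\imu y\mapsto (x+ty)+\imu y$: it is the identity on both boundary circles of each cylinder since $th_j=n_jc_j$, it satisfies $f_\lambda\circ F=f_{\lambda+t}$ up to a conformal map, and it is isotopic rel the critical graph to $T^{n_1}_{\gamma_1}\cdots T^{n_k}_{\gamma_k}$.

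One remark about peripheral cylinders is inaccurate, though the conclusion survives. The shear is not the identity \emph{near} the ideal boundary, only \emph{on} it, so the isotopy to a compactly supported twist cannot be compactly supported away from that boundary. Use instead the straight-line isotopy $x+\imu y\mapsto x+(1-s)ty+s\,\phi(y)+\imu y$, where $\phi$ vanishes near the ideal boundary and $\phi(h_j)=th_j$. This isotopy fixes the ideal boundary pointwise for every $s$ and moves points only a bounded hyperbolic distance. Hence the boundary values of the lifts on $\pt\Ha$ stay fixed, which is exactly what homotopy relative to the boundary at infinity in $\QMod(S)$ requires.
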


\section{Carath\'eodory metric on Teichm\"uller discs}\label{sec-eqiv} Let $S$ continue to denote a topologically finite Riemann surface. Let $\varphi \in \QD(S)$, and consider the Teichm\"uller disc $\tau^{\varphi}$. By Royden's theorem $\tau^{\varphi}$  is an isometric embedding of $(\Ha,d_\Ha)$  into $(\Teich(S),d_\T)$. Thus, from Lemma \ref{lemma: basic lemma 1} we conclude that $d_C=d_\T$ on $\T(S)$  if an only if there exists a holomorphic map  $\Hol:\Teich(S) \to \Ha$ such that  $\big(\Hol \circ  \tau^{\varphi}\big)  \in \Aut(\Ha)$.

\begin{lemma}\label{lemma-Hol} Fix a Jenkins-Strebel differential $\varphi \in \QD(S)$ and let $\Hol:\Teich(S) \to \Ha$ be any holomorphic map such that  $\big(\Hol\circ \tau^{\varphi}\big)(\lambda)=\lambda$, for every $\lambda \in \Ha$. The derivative of $\Hol$ at $S(\lambda)=\tau^{\varphi}(\lambda)$  is given by
\begin{equation}\label{hol-derivative}
d\Hol(\nu)=  \frac{-2\imu\Img(\lambda)}{||\varphi(\lambda)||_{1}}\, \int\limits_{S(\lambda)} \nu \varphi(\lambda),
\end{equation}
where $\nu \in \Belt\big(S(\lambda)\big)$ represents a tangent vector to $\Teich(S)$ at the point $S(\lambda)$, and $||\varphi (\lambda)||_1$ denotes  the $L^{1}$-norm of  $\varphi(\lambda) \in \QD\big(S(\lambda)\big)$. 
\end{lemma}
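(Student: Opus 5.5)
The plan is to determine $d\Hol$ at $S(\lambda)$ through two properties. First, its restriction to the tangent line of the Teichm\"uller disc is fixed by the normalisation $\Hol\circ\tau^{\varphi}=\id$. Second, $\Hol$ is a $1$-Lipschitz map from $(\T(S),d_\T)$ to $(\Ha,d_\Ha)$, by the Schwarz lemma together with $d_C\le d_K=d_\T$. This forces $d\Hol$ at $S(\lambda)$ to be a norm-one functional, for the Teichm\"uller/hyperbolic norms, whose value on the Teichm\"uller direction is prescribed. We then identify such functionals with quadratic differentials, using the duality $T_{S(\lambda)}\T(S)\cong (\QD(S(\lambda)))^*$ from Subsection \ref{subsection-well}.

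\textbf{Step 1.} Reduce to $\lambda=\imu$. The Teichm\"uller disc based at $S(\lambda)$ with differential $\varphi(\lambda)$ is a reparametrisation of $\tau^{\varphi}$ by a M\"obius map $B$ with $B(\imu)=\lambda$. Precomposing and postcomposing, we apply the $\lambda=\imu$ formula to $\Hol_1:=B^{-1}\circ\Hol$ and then use $(B^{-1})'(\lambda)=1/(B'(\imu))$. Since $|B'(\imu)|=\Img\lambda$ up to the rotation absorbed into $\varphi(\lambda)$, this produces the factor $\Img(\lambda)$.

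\textbf{Step 2.} Work at $\lambda=\imu$ with $\varphi=\varphi(\imu)$. Differentiating $\lambda\mapsto\left(\frac{\imu-\lambda}{\imu+\lambda}\right)\frac{|\varphi|}{\varphi}$ at $\imu$ gives the tangent vector $\nu_0=\frac{\imu}{2}\frac{|\varphi|}{\varphi}$, so $\dot\tau(\imu)=\nu_0$. The normalisation gives $d\Hol(\nu_0)=1$.

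\textbf{Step 3.} The map $d\Hol$ is a complex linear functional on $\BD(S)$ that vanishes on $\BD_0(S)$, because it factors through $T\T(S)$. Its norm is at most $2$ relative to the sup norm, since the infinitesimal hyperbolic metric of $\Ha$ at $\imu$ is $|dz|/2$ while the Teichm\"uller infinitesimal norm is the quotient sup norm; I need to check these constants carefully. Then $d\Hol$ lies in the dual of $\BD/\BD_0$, which is the double dual of $\QD(S)$. For finite topological type $\QD(S)$ need not be finite dimensional (bordered surfaces), so we cannot simply assume $d\Hol$ is given by integration against some $\psi\in\QD$.

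\textbf{Main obstacle.} I would handle this as follows. By Hahn--Banach and the Riesz representation of $(L^\infty)^*$ by finitely additive measures, $d\Hol(\mu)=\int\mu\,d m$ with $\|m\|\le 2$. Equality $\int\nu_0\,dm=1$ with $\|\nu_0\|_\infty=1/2$ forces $m$ to be "aligned" with $\frac{\imu}{2}\frac{|\varphi|}{\varphi}$ in the extremal sense. The expected conclusion is that $m$ is the absolutely continuous measure $-2\imu\,\varphi/\|\varphi\|_1$. The uniqueness here is essentially the Hamilton--Krushkal/Reich--Strebel uniqueness of the extremal (Teichm\"uller) Beltrami differential $k|\varphi|/\varphi$ for its class when $\varphi$ is Jenkins--Strebel, with finite norm, integrable. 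I would deduce from it that any norm-attaining functional at $\nu_0$ must coincide with $\frac{-2\imu}{\|\varphi\|_1}\int\cdot\,\varphi$. Concretely, this is the statement that $\nu_0+t\mu$ has quotient norm $\|\nu_0\|+t\,\Real\lambda_\mu(\cdot)$ to first order, a Gateaux derivative of the Teichm\"uller norm. For Jenkins--Strebel (hence finite-norm) $\varphi$, this follows from the infinitesimal main inequality, which remains valid on topologically finite surfaces. The main inequality step is the point that needs the appendix-type justification. Restoring general $\lambda$ via Step 1 gives (\ref{hol-derivative}).
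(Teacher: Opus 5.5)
\textbf{There is a genuine gap in the decisive step.} Your set-up agrees with the paper's. You normalise by a M\"obius map and use the Schwarz lemma with Royden's theorem to see that the differential at $S(\lambda)$ is, after rescaling, a norm-one functional $L$ on $T_{S(\lambda)}\T(S)\cong \BD/\BD_0$ with $L([\mu_\varphi]^*)=1$. You also correctly see why identifying $L$ is not automatic when $\QD(S)$ is infinite dimensional. But the identification $L=\int\cdot\,\varphi$ is the whole content of the lemma, and you only assert it.

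The tool you propose proves a different property. Reich--Strebel and Hamilton--Krushkal uniqueness, and the main inequality behind them, are primal statements: $\mu_\varphi$ is the unique minimal-norm representative of its class, and the main inequality gives \emph{lower} bounds for norms of representatives. What you need is a dual statement: the quotient norm is smooth at $[\mu_\varphi]^*$. Equivalently, every Hamilton sequence ($\psi_n\in\QD(S)$, $\|\psi_n\|_1=1$, $\int_S\mu_\varphi\psi_n\to 1$) must converge to $\varphi$ in $L^1$. A degenerating Hamilton sequence would give, as a weak-$*$ limit, a second norming functional that annihilates every compactly supported Beltrami differential. Unique extremality of $\mu_\varphi$ does not by itself exclude such sequences.

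The same problem appears in your other formulations. In the Gateaux version, the lower bound $\|[\mu_\varphi+t\mu]^*\|\ge|1+t\int_S\mu\varphi|$ is trivial by duality, and the upper bound is exactly the no-degeneration statement. In the finitely additive measure detour, the Hahn--Banach extension $m$ is not unique. Alignment with $\nu_0$ fixes only the phase of $m$; it does not make $m$ absolutely continuous with density a multiple of $\varphi$.

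\textbf{The missing ingredient} is the Jenkins--Strebel-specific estimate of Lemma \ref{lemma-tame}: if $\psi_n\to 0$ locally uniformly on $S$, then $\int_S\mu_\varphi\psi_n\to 0$, with no $L^1$ bound needed. To prove it, work on each annulus $\Pi\cong\{r_0^{-1}<|z|<r_0\}$, where $\psi_n=f_n\,dz^2$ and $\mu_\varphi=\frac{z^2}{|z|^2}\frac{d\bar z}{dz}$. Polar coordinates give $\int_\Pi\mu_\varphi\psi_n=2\log r_0\int_0^{2\pi}z^2f_n(z)\,d\theta$. By holomorphy the angular integral is independent of $|z|$, so it tends to $0$.

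With this estimate, Proposition \ref{prop-tame} goes through as follows:
\begin{enumerate}
\item Pick norm-one $\psi_n$ realising $L$ on $\operatorname{span}\{\mu_\varphi,\nu\}$.
\item Pass to a locally uniform limit $\psi$.
\item The lemma gives $\int_S\mu_\varphi\psi=1$, and hence $\psi=\varphi$.
\item Since $\|\psi_n\|_1=\|\varphi\|_1$ and $\psi_n\to\varphi$ locally uniformly, $\psi_n\to\varphi$ in $L^1$, so $L([\nu]^*)=\int_S\nu\varphi$.
\end{enumerate}
Without this step, or an equivalent argument excluding degenerating Hamilton sequences, your proof does not close.
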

\begin{remark} When $S$ is analytically finite we do not need to assume that $\varphi$ is a Jenkins-Strebel differential (see  Proposition 3.1 in \cite{Markovic}).
\end{remark}

\begin{proof}  Fix $\lambda\in \Ha$, and let $A:\Ha \to \D$ be the M\"obius map given by
$A(z)=\frac{z-\lambda}{z-\overline{\lambda}}$.
Note $A(\lambda)=0$. Set $\Fol=A \circ \Hol$. Then $\Fol:\Teich(S) \to \D$ is holomorphic.
Moreover, since $\Fol\big(\Teich(S)\big) \subset \D$, and $\Fol\big(S(\lambda)\big)=0$, it follows from the Schwarz lemma and the Royden's theorem that 
\begin{equation}\label{eq-kont}
\left| d\Fol([\nu]) \right| \le ||\nu||_{\infty}
\end{equation}
for every $\nu \in \Belt\big(S(\lambda)\big)$. Therefore, $d\Fol:T_S\T(S)\to \C$ is a linear functional of norm at most one.

On the other hand, let $f:\D\to \Teich_{g,n}$ be given by $f=\tau^{\varphi} \circ A^{-1}$. Then
$f$ is the Teichm\"uller disc given by 
$$
f(\eta)=\left[  \eta \frac{| \varphi(\lambda) |}{\varphi(\lambda)}   \right] \in \Teich(S),
$$
and $\big( \Fol \circ f \big) (\eta)=\eta$ for every $\eta\in \D$. Thus
$d\Fol\big(\frac{df}{d \eta} \big) =1$.
Since $\frac{df}{d\eta}=\frac{|\varphi(\lambda)| }{\varphi(\lambda)}$, we get
\begin{equation}\label{hol-derivative-proof-3}
d\Fol\left( \left[ \frac{|\varphi(\lambda)| }{\varphi(\lambda)} \right] \right) =1.
\end{equation}

Replacing (\ref{eq-kont}) and (\ref{hol-derivative-proof-3}) in Proposition \ref{prop-tame} we get
\begin{equation}\label{hol-derivative-proof-4}
d\Fol(\nu)=   \frac{1}{||\varphi(\lambda)||_{1} } \int\limits_{S(\lambda)} \nu  \varphi(\lambda),
\end{equation}
for every $\nu \in \Belt\big(S(\lambda)\big)$ (we note that Proposition \ref{prop-tame} is the only place in this proof where we use that $\varphi$ is Jenkins-Strebel). From $\Fol=A \circ \Hol$ we get 
$$
d \Hol(\nu)=\frac{1}{A'(\lambda)} \, d\Fol (\nu).
$$
Replacing  $A'(\lambda)=\frac{1}{-2\imu\Img(\lambda)}$
in the previous identity yields the proof.
\end{proof}

\subsection{Equivariant holomorphic functions}   Assume  $\varphi \in \QD(S)$  to be  a  rational  Jenkins-Strebel differential.  In Lemma \ref{lemma-varphi} we constructed the  Dehn twist  $T \in \QMod(S)$ which lies in the image of the homomorphism $\tau^{\varphi}_{*}: \Stab(\tau) \to \QMod(S)$. In the proof of the next lemma
we average a holomorphic function  $\Fol:\Teich(S) \to \Ha$ (in a suitable sense) over the cyclic group generated by $T$ to obtain an equivariant holomorphic function $\Hol:\Teich(S) \to \Ha$.

\begin{lemma}\label{lemma-equiv-10} Suppose  $\varphi \in \QD(S)$ is a rational  Jenkins-Strebel differential.  
If $d_C=d_\T$ on the Teichm\"uller disc $\tau^{\varphi}(\Ha)\subset \T(S)$ then  there exists a holomorphic function  $\Hol:\Teich(S) \to \Ha$ with the following properties
\begin{enumerate}
\item   \,\,\, $\big(\Hol \circ \tau^{\varphi}\big)(\lambda)=\lambda$,  for every  $\lambda \in \Ha$,
\vskip .1cm
\item \,\,\,  $\big( \Hol \circ T \big)(S')=\Hol(S')+t$,  for every  $S' \in \Teich(S)$, where 
$T=\tau^{\varphi}_{*}(A_t)$ is the Dehn twist from  Lemma \ref{lemma-varphi}. 
\end{enumerate}
\end{lemma}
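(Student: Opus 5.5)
We begin from the hypothesis $d_C=d_\T$ on $\tau^{\varphi}(\Ha)$. By Royden's theorem $\tau^{\varphi}$ is a holo-isometric disc, so for two distinct points, say $\imu$ and $2\imu$, Lemma \ref{lemma: basic lemma 1} supplies a holomorphic $\Fol_0:\T(S)\to\Ha$ realising $\dist_C$ between $\tau^{\varphi}(\imu)$ and $\tau^{\varphi}(2\imu)$. By Lemma \ref{lemma-ext}, $\Fol_0\circ\tau^{\varphi}$ is a conformal automorphism of $\Ha$. Post-composing $\Fol_0$ with the inverse automorphism, we obtain a holomorphic $\Fol:\T(S)\to\Ha$ with $\Fol\circ\tau^{\varphi}=\id$.

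Next we average $\Fol$ over the cyclic group generated by $T$. Set $\Fol_m(S')=\Fol(T^m(S'))-mt$ for $m\in\N$. Each $\Fol_m$ is holomorphic, since $T$ acts biholomorphically on $\T(S)$, and it takes values in $\Ha$, since subtracting the real number $mt$ preserves $\Ha$. By Lemma \ref{lemma-varphi}, $T\circ\tau^{\varphi}=\tau^{\varphi}\circ A_t$, hence $T^m\circ\tau^{\varphi}=\tau^{\varphi}\circ A_{mt}$ and $\Fol_m(\tau^{\varphi}(\lambda))=\lambda+mt-mt=\lambda$. We then set $\Hol=A^{-1}\circ L\big((A\circ\Fol_m)_{m\in\N}\big)$, with $A:\Ha\to\D$ a M\"obius map and $L$ a Banach limit, exactly as in the proof of Lemma \ref{lemma: basic lemma 1}. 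The sequence $(A\circ \Fol_m)$ is a holomorphic map into the unit ball of $l^\infty(\N)$, so $\Psi=L\circ(A\circ\Fol_m)$ is holomorphic with $|\Psi|\le1$. Property (1) follows because on the disc the sequence is constant, equal to $A(\lambda)$, and $L$ maps constant sequences to their value; in particular $\Psi$ is nonconstant. By the maximum principle $\Psi$ maps into the open disc $\D$, so $\Hol$ is well defined and lands in $\Ha$.

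The main obstacle is property (2). Shift invariance of $L$ gives $L\big((A\circ\Fol_{m})(T S')\big)=L\big((A\circ(\Fol_{m+1}+t))(S')\big)$, because $\Fol_m(TS')=\Fol_{m+1}(S')+t$. The difficulty is that $A$ does not commute with the translation by $t$, so we cannot directly pull the $+t$ outside the Banach limit. To fix this, we choose $A$ adapted to the translation: replace the disc model by the exponential map $E(z)=e^{2\pi\imu z/t}$, which carries $\Ha$ onto the punctured disc $\D\setminus\{0\}$ and satisfies $E(z+t)=E(z)$. Then $(E\circ\Fol_m)(TS')=(E\circ\Fol_{m+1})(S')$, and shift invariance gives $\Psi\circ T=\Psi$ for $\Psi=L\circ(E\circ\Fol_m)$.

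However, $E$ only recovers $\Hol$ up to the period $t$ and loses the real part. So instead we work with the holomorphic $\D$-valued function $\Psi$ directly. On the disc, $\Psi(\tau^{\varphi}(\lambda))=e^{2\pi\imu\lambda/t}\ne0$, hence $\Psi\not\equiv0$, and $|\Psi|<1$ by the maximum principle. The remaining concern is zeros of $\Psi$ off the disc, which would prevent taking a logarithm. We avoid this by an alternative route: average the bounded holomorphic functions $\Fol_m-\Fol$ instead, or equivalently take the Banach limit of the $\Ha$-valued maps $\Fol_m$ viewed in the convex set $\Ha$. Since $\Ha$ is convex and $L$ is positive and real on real sequences, $\Img L(\Fol_m)\ge0$, with strict inequality by the maximum principle as before. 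The issue is that $\Fol_m$ may be unbounded; but the sequence $\Fol_m(S')$ is bounded for each fixed $S'$, because $d_\Ha(\Fol_m(S'),\Fol_m(\tau^{\varphi}(\imu)))\le d_\T(S',T^{-m}\ldots)$ — more precisely, $d_\Ha(\Fol_m(S'),\imu)\le d_\T(T^mS',T^m S)=d_\T(S',S)$. This bound is uniform on bounded sets, so $\Hol(S')=L\big((\Fol_m(S'))_m\big)$ is holomorphic, being a locally uniformly bounded family. It is also linear in the sense needed: shift invariance gives $\Hol(TS')=L(\Fol_{m+1}(S')+t)=\Hol(S')+t$, and (1) holds as before.
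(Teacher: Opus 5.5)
Your final argument is correct and is essentially the paper's proof. You form $\Fol_m=\Fol\circ T^m-mt$, use the Schwarz-lemma bound $d_\Ha(\Fol_m(S'),\imu)\le d_\T(S',S)$ (uniform in $m$) to take a Banach limit of the $\Ha$-valued sequence directly, and get $\Hol\circ T=\Hol+t$ from shift invariance. The earlier detours through the M\"obius map $A$ and the exponential $E$ are unnecessary and can be deleted; the uniform bound already gives $\Img\Fol_m(S')\ge c(S')>0$ independent of $m$, so positivity of $L$ places $\Hol(S')$ in $\Ha$ without the maximum principle.
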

\begin{remark}
When $S$ is of finite analytic type this was proved in Lemma 3.1 in \cite{Markovic}.
\end{remark}
\begin{proof} 
Recall  from Lemma \ref{lemma-varphi} the automorphism $A_t \in \Stab(\tau^{\varphi})$ given by $A_t(\lambda)=\lambda+t$, $\lambda \in \Ha$ (the number $t$ is given by (\ref{t})).  The Dehn multi-twist $T \in \QMod(S)$ is given by 
$\tau^{\varphi}_{*}(A_t)=T$, and we have $T\circ \tau^{\varphi}=\tau^{\varphi}\circ A_t$.

Since $d_C=d_\T$ on  $\tau^{\varphi}(\Ha)\subset \T(S)$, from Lemma \ref{lemma: basic lemma 1} we conclude that  there exists a holomorphic map  $\Psi:\Teich(S) \to \Ha$ such that  $\big(\Psi \circ  \tau^{\varphi}\big)=\id$ on $\Ha$. Define $\Psi_n:\T(S)\to \Ha$ by $\Psi_n=\Psi\circ T^n-nt$.
Since $T\circ \tau^{\varphi}=\tau^{\varphi}\circ A_t $ it follows that for every $n$ the equality
\begin{equation}\label{eq-n}
\Psi_n\circ \tau^{\varphi}=\id,
\end{equation}
holds on $\Ha$.
Furthermore, $\Psi_n$ satisfies the recurrence formula
\begin{equation}\label{eq8}
\Psi_n\circ T=\Psi_{n+1}+t.
\end{equation}

From the Schwartz lemma and (\ref{eq-n}) we conclude $d_\Ha(\Psi_n(S'),i)\leq d_\T(S',S)$ for every $S'\in \T(S)$, and every $n\in \N$. This implies that there exist  constants $C,q>0$, depending on $S'$, but independent of $n$, such that
\begin{equation}\label{eq-n1}
-C< \Real\big( \Psi_n(S')\big)<C, \quad \quad \Img(\Psi_n(S'))>q.
\end{equation}

Let $L:l^\infty(\N)\rightarrow\C$   be a Banach limit. Recall that $L$ satisfies the following properties:
\begin{enumerate}
\item $L(\{a_n\}_{n\in\N})=a$, if $a_n\rightarrow a$, 
\vskip .1cm
\item $L((a_n)_{n\in\N})=L(\{a_{n+1}\})_{n\in\N})$.
\end{enumerate}
Set
$$
\Hol(S')=L\left(  \{\Psi_n(S')\}_{n\in\N} \right).
$$
From (\ref{eq-n1}) we conclude that the range of  $\Hol$ is $\Ha$. Moreover,  the map $\Hol:\T(S)\to \C$ is holomorphic as a composition of two holomorphic maps. Now, from (\ref{eq-n}), and the property (1) of $L$, we conclude that  $\Hol \circ \tau^{\varphi}=\id$ on $\Ha$. Furthermore, combining (\ref{eq8}) with  the property (2) of $L$ implies that 
$\Hol\circ T=\Hol+t$. This completes the proof.
\end{proof}

\section{Mapping the polyplane $\Ha^{k}$ to $\Teich(S)$ } \label{sec-poly}

\subsection{Teichm\"uller polyplanes}

Select a rational Jenkins-Strebel quadratic differential  $\varphi \in \QD(S)$  and   let $h_1,...,h_{k}>0$ denote the heights  (with respect to the $|\varphi|$ singular metric) of  the corresponding annuli $\Pi_j$.  We let $\Ha^k=\Ha \times \cdot \cdot \cdot \times \Ha$ denote the $k$-fold product of the upper half plane $\Ha$, and 
let $\lambda=(\lambda_1,...,\lambda_k)$ denote the coordinates on $\Ha^k$.  Define $\Embo:\Ha^{k} \to \Belt_1(S)$ by letting
$$
\Embo(\lambda)=\left( \frac{\imu-\lambda_j}{\imu+\lambda_j} \right) \frac{| \varphi |}{\varphi},    
$$
on each $\Pi_j$.  This yields the map $\Emb:\Ha^{k} \to \Teich(S)$  by  letting $\Emb (\lambda)=\left[ \Embo(\lambda)\right]$. We say that  $\Emb$ is the polyplane mapping corresponding to  $\varphi$. 
The map $\Embo$ is clearly holomorphic and thus $\Emb$ is holomorphic as well.  Observe  that the restriction of $\Emb$ on the diagonal in 
$\Ha^{k}$ is the Teichm\"uller disc $\tau^{\varphi}$.

The  new marked Riemann surface $\Emb (\lambda)=S(\lambda)$ comes equipped with the quadratic differential $\varphi(\lambda)$ which is
the unique  Jenkins-Strebel differential in $\QD\big(S(\lambda)\big)$ which induces a decomposition of $S(\lambda)$ 
into annuli $\Pi_j(\lambda)$   (swept out by closed horizontal trajectories homotopic to $\gamma_j$)  such that the height $h_j(\lambda)$ of 
$\Pi_j(\lambda)$ is given by $h_j(\lambda)=\Img(\lambda_j) h_j$. 
  
Let $m_j$ denote the conformal modulus of the annulus $\Pi_j$. Observe that $\Emb$ conjugates the translation  $\lambda \to \big( \lambda+(0,..,m^{-1}_j,...,0) \big)$ to the twist $T_{\gamma_{j}} \in \QMod(S)$, that is 
$$
\Emb\big( \lambda+(0,..,m^{-1}_j,...,0) \big)=\left( T_{\gamma_{j}} \circ \Emb\right)(\lambda),
$$
for every $\lambda \in \Ha^{k}$. This yields the equality

\begin{equation}\label{conj-twist}
\Emb\big( \lambda+(t,...,t) \big)=\left( T \circ \Emb\right)(\lambda),
\end{equation} 
where $t$ is given by (\ref{t}) and $T\in \QMod(S)$ is the corresponding Dehn twist
from Lemma \ref{lemma-varphi} above.

\subsection{Criterion for $d_C=d_\T$}
Suppose $\Hol:\Teich(S) \to \Ha$ is a holomorphic function satisfying the conditions $(1)$ and $(2)$ from  Lemma \ref{lemma-equiv-10}.  Let $f= \Hol \circ \Emb$. Then $f:\Ha^{k} \to \Ha$. Moreover, the values of $f$ on the diagonal in $ \Ha^{k}$ are given by

\begin{equation}\label{f-1}
f(\eta,\eta,...,\eta)=\eta, 
\end{equation}
for every  $\eta \in \Ha$.

Let $\gamma_j$  stand for  simple closed curves on $S$ homotopic to the  corresponding annuli $\Pi_j$ (which are swept out by closed horizontal trajectories of $\varphi$). Taking into the account  (\ref{f-1}) and (\ref{hol-derivative}), an applying Proposition 5.1in \cite{Markovic},  we get that
for $\lambda=(\eta,...,\eta)$  the following holds
$$
\frac{\partial{f}}{\partial{\lambda_{j}}} (\lambda)=\alpha_j,
$$
where 
$$
\alpha_j=  \frac{1}{||\varphi(\lambda)||_{1}}  \, \int\limits_{\Pi_{j}(\lambda)} |\varphi (\lambda)| =\frac{1}{||\varphi||_{1}}  \, \int\limits_{\Pi_{j}} |\varphi |.
$$
The following is Theorem 5.1 in \cite{Markovic}. 
\begin{theorem}\label{thm-Hol}  Let $\varphi$ be a rational  Jenkins-Strebel differential that induces  $S$ to  decompose into exactly two (non-degenerate) annuli swept out by closed horizontal trajectories of $\varphi$.  Then  any holomorphic function   $\Hol :\Teich(S) \to \Ha$   satisfying the conditions $(1)$ and $(2)$ from Lemma \ref{lemma-equiv-10} has the property that its   restriction  on the set $\Emb(\Ha^2) $ is  given by the formula  
$$
\big(\Hol \circ \Emb\big)(\lambda_1,\lambda_2)=\alpha_1 \lambda_1 +\alpha_2 \lambda_2,
$$
for every  $(\lambda_1,\lambda_2) \in \Ha^2$.
\end{theorem}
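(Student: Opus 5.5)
The plan is to study the function $f=\Hol\circ\Emb:\Ha^2\to\Ha$ and show it must be affine, using three ingredients: the diagonal identity (\ref{f-1}), the equivariance (\ref{conj-twist}) combined with condition (2) of Lemma \ref{lemma-equiv-10}, and the derivative formula from Lemma \ref{lemma-Hol}. First, since $\Emb(\lambda+(t,t))=T\circ\Emb(\lambda)$ and $\Hol\circ T=\Hol+t$, the function $f$ satisfies $f(\lambda_1+t,\lambda_2+t)=f(\lambda_1,\lambda_2)+t$. Because the moduli are rationally related, the individual twists $T_{\gamma_1}^{n_1}$ and $T_{\gamma_2}^{n_2}$ also act through translations in each coordinate separately. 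However, $\Hol$ is only equivariant under their product $T$, so this gives only the diagonal translation. I would therefore pass to the coordinates $u=\lambda_1-\lambda_2$ and $w=\alpha_1\lambda_1+\alpha_2\lambda_2$, and set $g(\lambda)=f(\lambda)-\alpha_1\lambda_1-\alpha_2\lambda_2$. Since $\alpha_1+\alpha_2=1$, $g$ is invariant under $\lambda\mapsto\lambda+(t,t)$. It vanishes on the diagonal, and its gradient vanishes on the diagonal by the computation $\partial f/\partial\lambda_j=\alpha_j$ preceding the theorem.

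Next I would use the Schwarz--Pick constraint: $f$ maps $\Ha^2$ to $\Ha$, so it is a contraction for the Kobayashi metric of $\Ha^2$, which is the max of the two hyperbolic metrics. Restrict to complex lines through diagonal points in a direction $(1,-c)$ for real $c>0$ chosen so that the slice stays inside $\Ha^2$. Along the diagonal, the map restricted to the slice $\zeta\mapsto(\eta+a\zeta,\eta+b\zeta)$ has derivative $\alpha_1a+\alpha_2b$ at $\zeta=0$. Comparing this with the Kobayashi norm $\max(|a|,|b|)/\Img\eta$ shows that the linear part is exactly extremal. This is the Royden-type rigidity: equality in Schwarz's lemma in a two-dimensional family of directions. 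I would then apply the sharp form of Schwarz's lemma for maps $\D^2\to\D$ whose differential at a point equals a convex combination $\alpha_1dz_1+\alpha_2dz_2$ with $\alpha_j>0$. Such a map is extremal, and a maximum-principle argument on the auxiliary function $(f(z)-\alpha_1z_1-\alpha_2z_2)$, normalized at each diagonal point, should force it to vanish.

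The main obstacle is this rigidity step. A holomorphic map $\D^2\to\D$ with prescribed differential $\alpha_1dz_1+\alpha_2dz_2$ at the origin is \emph{not} unique in general. For example, $z\mapsto \alpha_1z_1+\alpha_2z_2+c(z_1-z_2)^2$ works for small $c$. So the second-order information must come from the invariance of $g$ under the translation $(t,t)$ together with $g=0$ and $dg=0$ along the entire diagonal. My first attempt would be to Fourier expand $g$ in $e^{2\pi\imu w/t}$, using the $(t,t)$-periodicity, with coefficients holomorphic in $u$. I would then show that every coefficient vanishes. The imaginary part of $f$ is bounded below by $\min(\Img\lambda_j)$-type estimates from Schwarz--Pick applied along the diagonal at every height. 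Letting $\Img w\to\infty$ along the diagonal, where the hyperbolic comparison is uniform, forces the nonconstant Fourier modes to decay. The zero mode would then be a holomorphic function of $u$ alone, vanishing to second order at $u=0$, whose growth is controlled by Schwarz--Pick. Positivity of $\Img f$ on all of $\Ha^2$ should then kill it, since any nonzero term in $u$ would make $\Img f$ negative somewhere for $\Img w$ fixed and $u$ large in a suitable direction.
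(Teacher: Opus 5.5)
The paper gives no argument of its own here: the statement is imported as Theorem 5.1 of \cite{Markovic}, with the three inputs you use set up just before it. These are $f(\eta,\eta)=\eta$, the $(t,t)$-equivariance, and $\partial f/\partial\lambda_j=\alpha_j$ on the diagonal. Judged on its own, your setup is sound, and you are right to abandon the rigidity idea of your second paragraph. The Fourier step, however, has a genuine gap. Write $g=\sum_{n\in\mathbb{Z}} c_n(u)e^{2\pi\imu n w/t}$ with $c_n$ entire in $u$. Letting $\Img w\to\infty$ can only eliminate the modes that \emph{grow} there, namely $n<0$. The modes with $n>0$ decay automatically, so nothing in your argument excludes, say, a term $c_1(u)e^{2\pi\imu w/t}$ with $c_1(0)=c_1'(0)=0$. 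You nevertheless proceed as if $g$ were its zero mode. The zero-mode step is also only heuristic. For fixed $\Img w$ the variable $u$ is confined to the strip $m(u)<\Img w$, where $m(u)=\max(\alpha_1\Img u,-\alpha_2\Img u)$. The value of $\Img f$ there still involves the $n>0$ modes, and $c_0$ is a priori transcendental, so a ``leading term in $u$'' argument does not apply.

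The missing idea is to use the lower bound you already invoke on the \emph{whole} domain, not just asymptotically. Comparing with the diagonal points $(\imu Y,\imu Y)$, $Y\to\infty$, gives $\Img f(\lambda)\ge\min(\Img\lambda_1,\Img\lambda_2)$. Equivalently, $\Img g(u,w)\ge -m(u)$ whenever $\Img w>m(u)$.

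First, average over one period in $\Real w$. This removes every mode with $n\neq 0$ and gives $\Img c_0(u)\ge -m(u)\ge -|u|$ for all $u\in\C$. Since $c_0(0)=0$, the Borel--Carath\'eodory inequality applied to $\imu c_0$ yields $|c_0(u)|\le 4|u|$, so $c_0$ is affine. Then $c_0'(0)=0$ forces $c_0\equiv 0$. This is where $\partial f/\partial\lambda_j=\alpha_j$ enters; without it one would only get a convex combination $\beta\lambda_1+(1-\beta)\lambda_2$.

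Next, take $u\in\R$. Then $m(u)=0$, so $\Img g(u,\cdot)\ge 0$ on all of $\Ha$, while its mean over every horizontal period is $\Img c_0(u)=0$. Hence $\Img g(u,\cdot)\equiv 0$, and so $g(u,\cdot)\equiv c_0(u)=0$.

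Finally, for fixed $w$, $g(\cdot,w)$ is holomorphic on the strip $\{m(u)<\Img w\}\supset\R$ and vanishes on $\R$, so $g\equiv 0$. This handles all modes at once, so your separate treatment of the growing modes is not even needed.
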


\section{$L$-shaped 3-punctured disc} In Section 7 in \cite{Markovic} we constructed an $L$-shaped 5-punctured sphere by doubling  an $L$-shaped Euclidean polygon. In this section we repeat the same construction except that here we glue the two copies of the polygon along five of its edges (to construct an $L$-shaped 5-punctured sphere one needs to glues the two copies along all six edges).

\subsection{$L$-shaped polygons}  
We make the standing assumptions that 
$$
a>0, \quad b\ge 0,\quad \text{and $0<q<1$}.
$$ 
Let $L=L(a,b,q)$  denote the $L$-shaped polygon as in Figure \ref{fig:Ls1}.  By $P_k$, $k=1,...,5$, we denote the vertices at which the interior angle is $\pi/2$ and by $Q$ the remaining vertex at which the interior angle is $3\pi/2$. 

Let $S=S(a,b,q)$ be the union of two copies of $L(a,b,q)$ with all edges identified except the bottom edge. Formally, $S$ is defined as a half-translation surface. As a Riemann surface $S$ is a 3-punctured disc. The points $P_1,P_2,P_3$ are the three punctures. The boundary of the disc is the curve  parametrised by the union of two copies of the interval connecting the points $P_5$ and $P_1$  (this is the bottom edge of $L(a,b,q)$). Thus, the points $P_5$ and $P_1$ live on the boundary of the disc. We call $S$  a L-shaped 3-punctured disc.

The $(2,0)$ form $dz^2$ lives on the polygon $L$. After doubling, the two copies of the form $dz^2$ glue together along the edges of the L-shaped 3-punctured disc $S$  to form the holomorphic quadratic differential $\psi=\psi(a,b,q)$ on $S$. The differential $\psi$ has the first order poles at the points $P_k$ and the first order zero at  $Q\in S$.  
Moreover, if $b>0$ then $\psi$ is a Jenkins-Strebel differential and $S$ decomposes into two non-degenerate annuli $\Pi_1$ and $\Pi_2$ swept out by closed horizontal trajectories  of $\psi$. (If $b=0$ then $\psi$ has first order poles at the points $P_1,P_2,P_4,P_5$ and no zeroes).

\subsection{$L$-shaped 3-punctured discs}

Fix once for all  rational numbers $a_0,b_0,q_0\in \Q$, where $a_0,b_0>0$ and $q_0\in(0,1)$. Set $S_0=S(a_0,b_0,q_0)$ and  $\psi_0=\psi(a_0,b_0,q_0)$.  Note that $\psi_0$ is a rational Jenkins-Strebel differential.

Each Riemann surface $S(a,b,q)$ has the standard marking $S_0\rightarrow S$ which restricts to the identity map on the boundary which is the union of two copies of the interval $\overline{P_5P_1}$ with endpoints identified. 
The corresponding family of standardly marked  surfaces $S(a,b,q)$ varies   continuously in $\Teich(S_0)$ when  $a,b,q$ vary continuously, and  is a real 3-dimensional locus   in $\Teich(S_0)$.

\begin{figure}[t]
\begin{center}
\includegraphics[height=5cm]{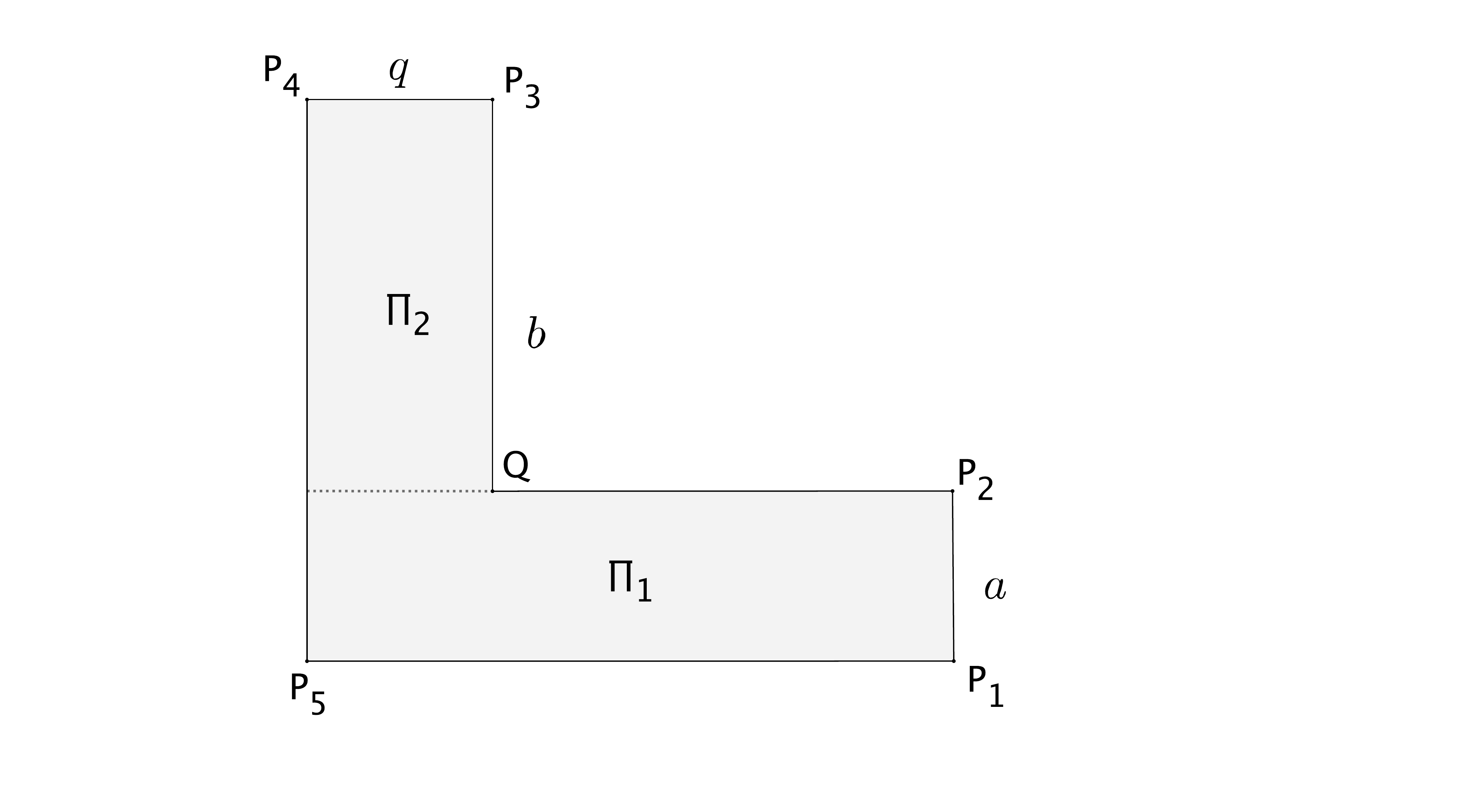}
\caption{$L$-shaped polygon $L(a,b,q)$ }
\label{fig:Ls1}
\end{center}
\end{figure}

\begin{remark} We simplify the notation we also use $S(a,b,q)$ to denote the corresponding 3-punctured disc with the standard marking. 
\end{remark}

\begin{remark} If as in \cite{Markovic} we glue two copies of $L(a,b,q)$ along all six sides then $\big(S(a,b,q), \psi(a,b,q)\big)$  is a real 3-dimensional locus of the cotangent bundle over 
$\Teich(S_0)$ while  the  family of standardly marked Riemann surfaces  $S(a,b,q)$  is a real 2-dimensional locus in $\Teich(S_0)$. 
\end{remark}

\subsection{The Carath\'eodory metric on $\tau^{\psi_{0}}$ } The following lemma shows that if $d_C=d_\T$ on $\tau^{\psi(a_0,b_0,q_0)}$ then there exists  a holomorphic function   $\Fol :\Teich(S_0) \to \Ha$ which is linear in  $a$ and $b$ when restricted to the locus $S(a,b,q_0)$.

\begin{lemma}\label{lemma-texnika}  Fix $q_0 \in (0,1)$ and suppose $d_C=d_\T$ on $\tau^{\psi(a_0,b_0,q_0)}$. Then there exists  a holomorphic function   $\Fol :\Teich(S_0) \to \Ha$ such that 
$$
\Fol\big(S(a,b,q_0)\big)=\left( a+bq_0   \right)\imu,
$$
for every $a>0$ and $b\ge 0$. 
\end{lemma}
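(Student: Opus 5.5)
The plan is to combine Lemma \ref{lemma-equiv-10} with Theorem \ref{thm-Hol} for the differential $\psi_0=\psi(a_0,b_0,q_0)$, and then to recognise the locus $S(a,b,q_0)$, $b>0$, as the image of the imaginary axes of the polyplane $\Emb_0:\Ha^2\to\Teich(S_0)$. Since $a_0,b_0,q_0$ are rational and $b_0>0$, $\psi_0$ is a rational Jenkins--Strebel differential whose horizontal trajectories decompose $S_0$ into exactly two non-degenerate annuli $\Pi_1,\Pi_2$.

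\textbf{Obtaining the linear function.} Assuming $d_C=d_\T$ on $\tau^{\psi_0}$, Lemma \ref{lemma-equiv-10} (valid here because $S_0$ is topologically finite) gives a holomorphic $\Hol:\Teich(S_0)\to\Ha$ satisfying conditions (1) and (2). Theorem \ref{thm-Hol} then gives
$$
(\Hol\circ\Emb_0)(\lambda_1,\lambda_2)=\alpha_1\lambda_1+\alpha_2\lambda_2 ,
$$
where $\alpha_j=\|\psi_0\|_1^{-1}\int_{\Pi_j}|\psi_0|$ are the area proportions of the two annuli. We must check that Lemma \ref{lemma-Hol} and Proposition 5.1 of \cite{Markovic}, which enter Theorem \ref{thm-Hol}, hold for this topologically finite surface. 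This is what the appendix (Proposition \ref{prop-tame}) is for, and I will take it as given.

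\textbf{Identifying the locus.} Reading the geometry off Figure \ref{fig:Ls1}, I expect the following decomposition. One annulus, say $\Pi_1$, is the doubled lower strip of height $q$ and horizontal length $a+b$. The other, $\Pi_2$, is the doubled upper strip of height $1-q$ and length $a$. Now take $\lambda=(\imu y_1,\imu y_2)$ with $y_j>0$. The surface $\Emb_0(\lambda)$ is obtained from $S_0$ by vertically stretching $\Pi_j$ by the factor $y_j$, with no twist. Up to an overall Euclidean similarity it is again an $L$-shaped doubled polygon, with the boundary edge $\overline{P_5P_1}$ preserved, so the standard markings agree.

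Normalising the total height back to $1$ (or equivalently, rescaling horizontally) gives explicit formulas expressing $a,b$ as functions of $y_1,y_2$ with $q=q_0$ fixed. This map is onto $\{a>0,\ b>0\}$, since the two parameters $(y_1,y_2)$ control the two moduli independently. Substituting gives $\Hol(S(a,b,q_0))=\imu\,\ell(a,b)$ for some explicit function $\ell$, which should be affine in $(a,b)$. One then composes $\Hol$ with a real affine map $z\mapsto cz+d\imu$, with $c>0$ and $d\ge0$ (this preserves $\Ha$), to normalise the value to $(a+bq_0)\imu$; call the result $\Fol$. The case $b=0$ follows by continuity, since $b\mapsto S(a,b,q_0)$ is continuous into $\Teich(S_0)$ and $\Fol$ is continuous.

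\textbf{Main obstacle.} The delicate step is the bookkeeping in the identification: matching $\Emb_0(\imu y_1,\imu y_2)$ with the correct $S(a,b,q_0)$, keeping the marking straight, and checking that the resulting expression is exactly proportional to $a+bq_0$. If the first guess at the normalisation fails, I would instead normalise by fixing the widths and letting the heights vary. I would then use that, on the imaginary axes, $\alpha_1y_1+\alpha_2y_2$ is proportional to the total $|\psi|$-area of the stretched surface. After rescaling, that area is the area of $L(a,b,q_0)$, namely $a\cdot1+b\cdot q_0$, which is the quantity appearing in the statement.
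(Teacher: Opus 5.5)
There is a gap. Your strategy is the paper's: apply Theorem \ref{thm-Hol} to $\psi_0$, identify the image of the imaginary axes under $\Emb_0$ with the slice $q=q_0$, rescale, and pass to $b=0$ by continuity. But the one concrete step, the identification, rests on a wrong picture of $L(a,b,q)$, and under that picture it fails.

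From the Schwarz--Christoffel data in Section \ref{subsection-SC}, the free edge $\overline{P_5P_1}$ has length $1$, and $|P_1P_2|=a$, $|QP_3|=b$, $|P_3P_4|=q$. So the polygon is the rectangle $[0,1]\times[0,a]$ sitting on the free edge, with a rectangle of width $q$ and height $b$ on top. Hence:
\begin{itemize}
\item $\Pi_2$ is the double of $[0,1]\times[0,a]$, the annulus adjacent to the boundary of the disc;
\item $\Pi_1$ is the double of the $q\times b$ rectangle.
\end{itemize}
Thus $a,b$ are the \emph{heights} of the annuli and $1,q$ are their widths.

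In your picture (strips of heights $q$ and $1-q$, widths $a+b$ and $a$), vertical stretching preserves the width ratio, so the whole image lies in $\{b/a=b_0/a_0\}$. Moreover, renormalising the total height to $1$ returns $q=q_0$ only when $y_1=y_2$. So your claim that $(y_1,y_2)$ sweeps out all of $\{a>0,b>0\}$ at fixed $q_0$ is false there.

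With the correct picture the identification is immediate and needs no rescaling. $\Emb_0(\imu y_1,\imu y_2)$ is realised by $x+\imu y\mapsto x+\imu y_j y$ on $\Pi_j$. This map keeps the widths and fixes the free edge pointwise, so the markings relative to the boundary agree. It produces $L(y_2a_0,y_1b_0,q_0)$, i.e.
\[
\Emb_0\bigl(\tfrac{b}{b_0}\imu,\tfrac{a}{a_0}\imu\bigr)=S(a,b,q_0).
\]
With $\alpha_1=b_0q_0/(a_0+b_0q_0)$ and $\alpha_2=a_0/(a_0+b_0q_0)$ this gives
\[
\Hol(S(a,b,q_0))=\frac{a+bq_0}{a_0+b_0q_0}\,\imu .
\]
So $\Fol=(a_0+b_0q_0)\Hol$ works, with no additive term $d\imu$.

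Your fallback observation $\alpha_1y_1+\alpha_2y_2=\|\psi_0(\lambda)\|_1/\|\psi_0\|_1$ is correct and yields this formula directly. It works only because the stretched surface is literally $L(a,b,q_0)$ carrying its own $dz^2$. If a similarity rescaling were needed, the area would change by the square of the scale factor, so your ``after rescaling'' step would not give $a+bq_0$.
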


\begin{proof} 

We 
let $\Emb_0:\Ha^2 \to \Teich(S_0)$ be the polyplane mapping corresponding to $\psi_0$. Set
$$
\alpha_1=  \frac{1}{||\psi_0||_{1}} \, \int\limits_{\Pi_{1}} |\psi_0 |=\frac{b_0q_0}{a_0+b_0q_0},
$$
and 
$$
\alpha_2= \frac{1}{||\psi_0||_{1}} \,     \int\limits_{\Pi_{2}} |\psi_0 |=\frac{a_0}{a_0+b_0q_0}.
$$
Since $a_0,b_0,q_0 \in \Q$ it follows that $\psi_0$ is a Jenkins-Strebel differential. So, if $d_C=d_\T$ on $\tau^{\psi_{0}}$  then  by Theorem \ref{thm-Hol} there exists a  holomorphic function  $\Hol:\Teich(S_0) \to \Ha$ such that
$$
\Hol\big(\Emb_0(\lambda_1, \lambda_2) \big)=\alpha_1 \lambda_1+\alpha_2 \lambda_2,
$$
where $(\lambda_1,\lambda_2) \in \Ha^2$.  We find that for any $a$ and $b$ the equality 
$$
\Emb_0\left(\frac{b}{b_0} \imu, \frac{a}{a_0} \imu\right)=S(a,b,q_0),
$$
holds. The last two formulas yield the following equality
\begin{equation}\label{izraz}
\Hol\big(S(a,b,q_0)\big)=\left( \frac{a+bq_0}{a_0+b_0q_0}   \right)\imu.
\end{equation}
Set 
$$
\Fol=(a_0+q_0b_0)\Hol.
$$
Then $\Fol:\Teich(S_0)$ is holomorphic and from (\ref{izraz}) we see that $\Fol$   satisfies the stated equality. To derive this formula we assumed $b>0$. Since $S(a,b,q) \to S(a,0,q)$  in $\Teich(S_0)$ when $b \to 0$,  the formula also holds for $b=0$. 
\end{proof}

\subsection{The derivative of $\Fol$ on the locus $S(a,b,q_0)$.}
We are now able to compute the derivative of the map $\Fol$ at every point of the locus $S(a,b,q_0)$.
    
\begin{lemma}\label{lemma-ab}    
For all $a>0$, $b\ge 0$,  we have
\begin{equation}\label{eq-ab}
d\Fol(\nu)= -\imu \int\limits_{S(a,b,q_0)} \nu\psi(a,b,q_0)
\end{equation}
at the point $S(a,b,q_0)$, for all $\nu\in\BD(S(a,b,q_0))$.
\end{lemma}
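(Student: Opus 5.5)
The plan is to apply Lemma \ref{lemma-Hol} at each point of the locus $S(a,b,q_0)$, after a suitable normalisation of $\Fol$. By Lemma \ref{lemma-texnika}, the restriction of $\Fol$ to the locus is $\Fol(S(a,b,q_0))=(a+bq_0)\imu$.

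First, fix $a>0$ and $b>0$. The Teichm\"uller disc $\tau^{\psi}$ with $\psi=\psi(a,b,q_0)$, based at $S(a,b,q_0)$, sends $\lambda=s\imu$ to the surface obtained by stretching the vertical direction by $s$. Since the $L$-shaped surface scales affinely, this gives $\tau^{\psi}(s\imu)=S(sa,sb,q_0)$, where the marking agrees with the standard marking because the stretch fixes the horizontal bottom edge $\overline{P_5P_1}$. Hence, along the imaginary axis,
$$
(\Fol\circ\tau^{\psi})(s\imu)=s(a+bq_0)\imu .
$$
By the identity principle, and since both sides are holomorphic in $\lambda$, we get $(\Fol\circ\tau^{\psi})(\lambda)=(a+bq_0)\lambda$. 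Here the holomorphic map $\Fol\circ\tau^{\psi}:\Ha\to\Ha$ agrees on the imaginary axis with $\lambda\mapsto(a+bq_0)\lambda$, and $\tau^{\psi}$ is a Teichm\"uller disc even when $\psi$ is not rational, since the Jenkins--Strebel property holds for all $b>0$. Therefore $\Hol_1=(a+bq_0)^{-1}\Fol$ satisfies $\Hol_1\circ\tau^{\psi}=\id$.

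Next, apply Lemma \ref{lemma-Hol} to $\Hol_1$ at $\lambda=\imu$. Since $\psi$ is Jenkins--Strebel for $b>0$, this gives
$$
d\Hol_1(\nu)=\frac{-2\imu}{\|\psi\|_1}\int_{S}\nu\psi .
$$
The doubled polygon has $\|\psi\|_1=2\,\mathrm{Area}(L(a,b,q_0))=2(a+bq_0)$, with the normalisation of the figure that the area of $L$ is $a+bq_0$, consistent with the values of $\alpha_1,\alpha_2$ computed in Lemma \ref{lemma-texnika}. Multiplying by $a+bq_0$ then yields $d\Fol(\nu)=-\imu\int\nu\psi$.

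The main obstacle is the case $b=0$. There $\psi(a,0,q_0)$ is a single-cylinder differential, a flat rectangle doubled, and it still has closed horizontal trajectories, so it is Jenkins--Strebel with one annulus. The argument above therefore applies verbatim: $\tau^{\psi}(s\imu)=S(sa,0,q_0)$ and $\Fol$ equals $sa\imu$ there by Lemma \ref{lemma-texnika}. I would check that Proposition \ref{prop-tame} (the appendix) covers this degenerate one-cylinder case. If it does not, the fallback is to pass to the limit $b\to0$ using continuity of $d\Fol$ on $\T(S_0)$ together with $L^1$ convergence of $\psi(a,b,q_0)$ to $\psi(a,0,q_0)$, after pulling back $\nu$ by the standard markings. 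This limit step needs care about how Beltrami differentials on the varying surfaces are identified, and is where I expect the real work to lie.
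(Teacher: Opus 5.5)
Your proof is correct and has the same two-step structure as the paper's. First you show $(\Fol\circ\tau^{\psi(a,b,q_0)})(\lambda)=(a+bq_0)\lambda$. Then you apply Lemma \ref{lemma-Hol} at $\lambda=\imu$ together with $\|\psi(a,b,q_0)\|_1=2(a+bq_0)$.

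The difference is in how you get the first step. The paper identifies $\tau^{\psi(a,b,q_0)}$ with the slice $\lambda\mapsto\Emb_0\big(\frac{b}{b_0}\lambda,\frac{a}{a_0}\lambda\big)$ of the polyplane. It then reads off the linear formula from the identity $(\Fol\circ\Emb_0)(\lambda_1,\lambda_2)=b_0q_0\lambda_1+a_0\lambda_2$ on all of $\Ha^2$, which comes from Theorem \ref{thm-Hol} inside the proof of Lemma \ref{lemma-texnika}. You use only the values of $\Fol$ on the real locus, which is exactly what Lemma \ref{lemma-texnika} states. You evaluate them along the ray $\tau^{\psi}(s\imu)=S(sa,sb,q_0)$ and finish with the identity theorem.

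Your route has a small advantage: it covers $b=0$ directly. In the paper's argument the slice $\big(0,\frac{a}{a_0}\lambda\big)$ is not in $\Ha^2$, so as written it only handles $b>0$. That is all that is used later, since $b(t)>0$ for small $t>0$ in the proof of Lemma \ref{lemma-12}.

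The check you flag for $b=0$ goes through. The paper's definition of a Jenkins--Strebel differential allows a single annulus ($k=1$), and Lemma \ref{lemma-tame} is proved one annulus at a time via the circle-average identity. So Proposition \ref{prop-tame}, and hence Lemma \ref{lemma-Hol}, apply to the one-cylinder differential $\psi(a,0,q_0)$, and your fallback limiting argument in $b\to 0$ is not needed.
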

\begin{proof} Note that when we restrict $\Emb_0$ to the slice 
$\left(\frac{b}{b_0}\lambda,\frac{a}{a_0}\lambda \right)\subset \Ha^2$ we get the Teichm\"uller disc $\tau^{\psi(a,b,q_0)}$. That is, we have the equality
$$
\tau^{\psi(a,b,q_{0})}(\lambda)=\Emb_0\left(\frac{b}{b_0}\lambda,\frac{a}{a_0}\lambda \right),
$$
for every $\lambda\in \Ha$. Combining this with Lemma \ref{lemma-texnika} we get 
$$
(\Fol\circ \tau^{\psi(a,b,q_{0})})(\lambda)=(\Fol\circ \Emb_0)\left(\frac{b}{b_0}\lambda,\frac{a}{a_0}\lambda \right)=(a+bq_0)\lambda.
$$
Applying  Lemma \ref{lemma-Hol} yields the equality
$$
d\Fol(\nu)= \frac{-2 \imu (a+bq_0) }{||\psi(a,b,q_0)||_{1} }  \int\limits_{S(a,b,q_0)} \nu\psi(a,b,q_0).
$$
Since $||\psi(a,b,q_0)||_{1}=2(a+bq_0)$ the previous equality proves the lemma.
\end{proof}

\section{Two paths in $\T(S_0)$}

Our  endgame  is to show that $\Fol$ is not smooth at a  point in the closure of  $\Emb_0(\Ha^2)\subset \T(S_0)$ which contradicts  the fact that $\Fol$ is holomorphic. As indicated in the introduction, we are going to investigate two paths $\sigma_1(t),\sigma_2(t)$ in $\T(S_0)$. The path $\sigma_1(t)$ is smooth while $\sigma_2(t)$ lies in the polyplane $\Emb_0(\Ha^2)$. 
We rely on the results from Section 9 of \cite{Markovic}.

\subsection{The first path} We have:

\begin{definition} We define $\sigma_1:[0,q_0)\to \T(S_0)$ by letting $\sigma_1(t)=S(a_0,0,q_0-t)$.
\end{definition}

Recall that the standard marking $S_0\rightarrow S(a_0,0,q_0-t)$ restricts to the identity on the boundary of the disc. Here we parametrise the boundary of both surfaces by the union of two copies of the interval $\overline{P_5P_1}$ with endpoints identified. 

\begin{lemma}\label{lemma-kraj-1} The path $\sigma_1(t)$ is  infinitely differentiable.
\end{lemma}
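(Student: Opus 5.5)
We will prove Lemma \ref{lemma-kraj-1} by writing down an explicit family of Beltrami differentials $\mu_t\in\BD_1(S_0)$ that represents $\sigma_1(t)$, depends smoothly on $t$, and respects the standard marking. Once this is done, the lemma follows from the identification (\ref{eq-iden}). The projection $\BD_1(S_0)\to\T(S_0)$ is holomorphic, hence real-analytic. A path $t\mapsto\mu_t$ that is $C^\infty$ as a map into the Banach space $\BD(S_0)$ therefore projects to a $C^\infty$ path in $\T(S_0)$.

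\textbf{Step 1: the comparison map on the polygon.} The base point of $\sigma_1$ is $\sigma_1(0)=S(a_0,0,q_0)$, which differs from $S_0=S(a_0,b_0,q_0)$. It is cleaner to compare everything with $S_0$ directly. For each $t\in[0,q_0)$ we construct a homeomorphism $F_t:L(a_0,b_0,q_0)\to L(a_0,0,q_0-t)$ with the following properties.
\begin{itemize}
\item It is piecewise affine with respect to a fixed triangulation of $L(a_0,b_0,q_0)$ whose vertices include $P_1,\dots,P_5,Q$.
\item It sends vertices to the corresponding vertices and edges to the corresponding edges.
\item It is the identity on the bottom edge $\overline{P_5P_1}$.
\end{itemize}
For $b=0$ the vertex $Q$ lies on an edge, so the target polygon is a rectangle with marked points; the triangulation is simply mapped onto it. The target vertices depend affinely on $t$. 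Consequently, on each triangle $\Delta$ the map $F_t$ is $z\mapsto A_t(z)$ with $A_t$ real-affine and affine in $t$. Its Beltrami coefficient $\mu_t|_\Delta$ is a constant that depends real-analytically on $t$ and has modulus strictly less than one, uniformly for $t$ in compact subsets of $[0,q_0)$.

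\textbf{Step 2: doubling.} We double $F_t$ using the same map on both copies of the polygon. Because $F_t$ respects the edge identifications, it descends to a quasiconformal homeomorphism $S_0\to S(a_0,0,q_0-t)$.
\begin{itemize}
\item Its Beltrami differential is $\mu_t$ on one copy and the reflected coefficient on the other.
\item It is the identity on the boundary curve, so it lies in the class of the standard marking.
\end{itemize}
Hence $\sigma_1(t)=[\mu_t]$.

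\textbf{Step 3: smoothness.} The function $\mu_t$ is piecewise constant on finitely many triangles, and each constant is a real-analytic function of $t$. Thus $t\mapsto\mu_t\in\BD(S_0)$ is real-analytic in the sup norm.

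The main obstacle is Step 1 near the degeneration $b=0$. As $b$ passes from $b_0$ to $0$, the zero $Q$ merges into an edge and the pole $P_3$ becomes a regular point. One has to check that the chosen triangulation still maps to non-degenerate triangles in $L(a_0,0,q_0-t)$. The straightforward choice is to triangulate using only the vertices $P_1,\dots,P_5,Q$ and to send $Q$ and $P_3$ to the points on the top edge of the rectangle at the prescribed positions. This keeps every image triangle non-degenerate whenever $q_0-t>0$.

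A second point to verify is that the marking is really the standard one. Near the punctures the map is affine, so the homotopy class relative to the boundary at infinity is unambiguous. This is also where the remark in the paper on homotopies rel boundary is used.
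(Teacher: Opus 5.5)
Your strategy is sound: an explicit family of piecewise-constant Beltrami coefficients, real-analytic in $t$, pushed through the holomorphic projection $\BD_1(S_0)\to\T(S_0)$. But the step you yourself flag as the main obstacle fails as written. The edge $\overline{QP_3}$ of $L(a,b,q)$ has length $b$: it is the image of $[\zeta-r,\zeta]$ under the Schwarz--Christoffel map, and $b=0$ iff $r=0$. Normalising $P_5=0$, $P_1=1$, one has $L(a,b,q)=[0,1]\times[0,a]\cup[0,q]\times[a,a+b]$, with $Q=q+\imu a$ and $P_3=q+\imu(a+b)$. So in $L(a_0,0,q_0-t)$ the vertices $Q$ and $P_3$ \emph{coincide} at $(q_0-t)+\imu a_0$; this is why $P_3$ stops being a pole and $Q$ stops being a zero. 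Sending both to their ``prescribed positions'' collapses the polygon edge $\overline{QP_3}$ to a point. Every triangulation of the hexagon by its own vertices has a triangle with side $\overline{QP_3}$, so that image triangle degenerates for every $t$. Hence $F_t$ is not a homeomorphism, and your claim of non-degeneracy for $q_0-t>0$ is false.

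The repair is short. $Q$ is neither a puncture nor a point of the border, so the marking does not constrain its image. Send it to $x_Q(t)+\imu a_0$ with $q_0-t<x_Q(t)<1$, e.g.\ $x_Q(t)=\tfrac12(1+q_0-t)$. Then $\overline{P_2Q}$ and $\overline{QP_3}$ go onto complementary subsegments of the single glued top edge $\overline{P_2P_3}$, which is compatible with the doubling. The fan from $P_5$ (triangles $P_5P_1P_2$, $P_5P_2Q$, $P_5QP_3$, $P_5P_3P_4$) then maps onto four non-degenerate, positively oriented triangles tiling the rectangle, with vertices affine in $t$, and the rest of your argument goes through.

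The paper's one-line proof avoids the issue by comparing with $S(a_0,0,q_0)$ rather than $S_0$. There $\sigma_1(t)$ is obtained by sliding the single puncture $P_3$ along the top edge of a fixed rectangle, with the other punctures and the border fixed; change of base point is a biholomorphism. You could do the same, using the fan from $P_5$ over $P_1,P_2,P_3,P_4$ with $P_3\mapsto(q_0-t)+\imu a_0$.

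Your justification of the marking also needs adjusting. Being the identity on the border does not pin the class down, since the mapping class group of the 3-punctured disc rel boundary is nontrivial, and affineness near the punctures is irrelevant. What pins it down is that your map is the double of a polygon homeomorphism carrying each boundary arc between consecutive marked vertices onto the corresponding arc and fixing $\overline{P_5P_1}$. Any two such maps are isotopic rel the bottom edge and the vertices (Alexander trick), and the isotopy doubles. With these corrections your route gives slightly more than the paper states: $\sigma_1$ is real-analytic and extends across $t=0$.
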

\begin{proof} The marked surfaces $S(a_0,0,q_0-t)$ arise from the marked surface  $S(a_0,0,q_0)$ by smoothly varying one of its punctures while keeping the other two punctures, and the boundary of the disc, fixed. This implies that $\sigma_1$ is smooth.
\end{proof}

\begin{figure}[t]
\begin{center}
\includegraphics[height=5cm]{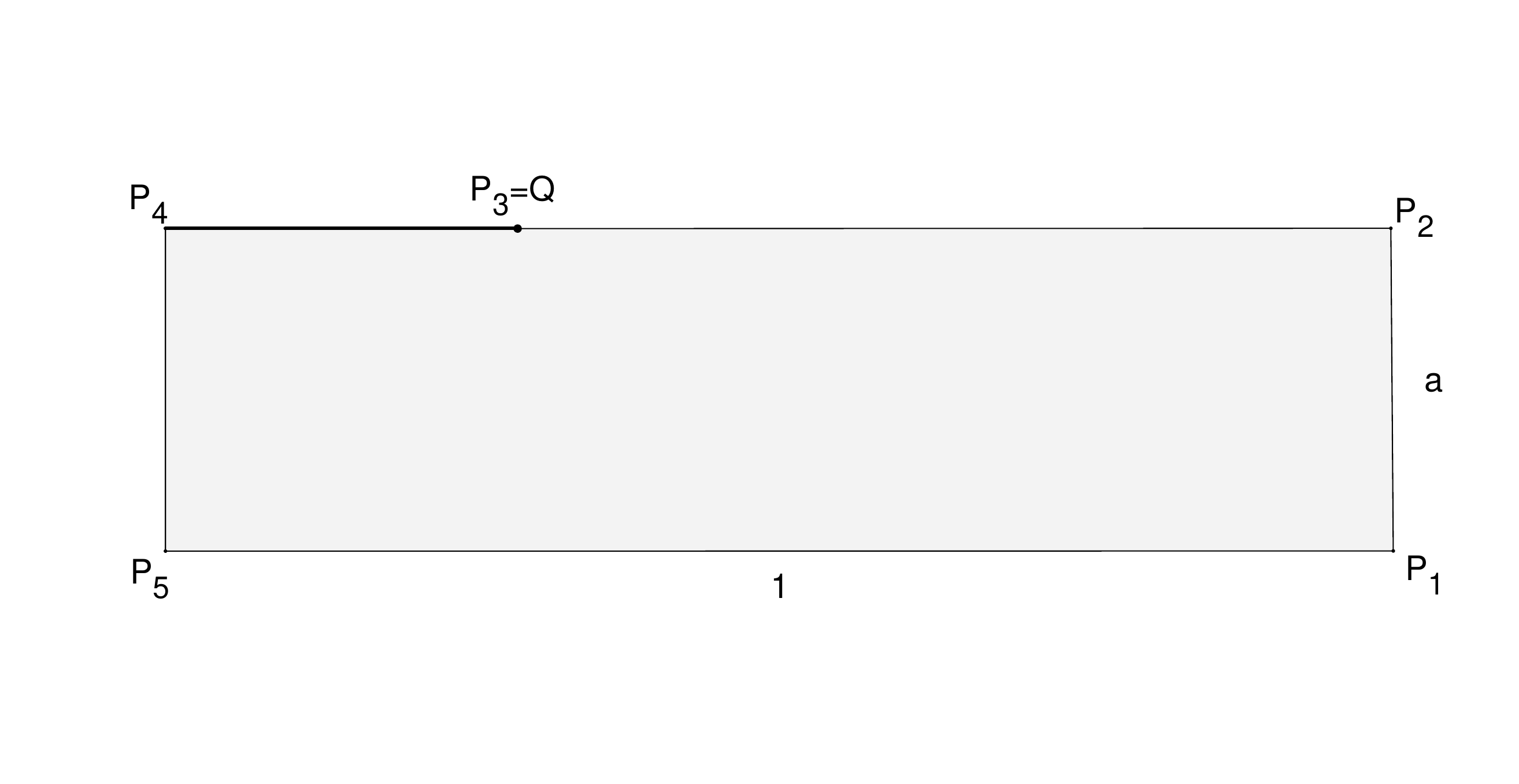}
\caption{$L(a_0,0,q_0-t)$}
\label{fig:Lr1}
\end{center}
\end{figure}

\subsection{ Schwarz-Christoffel maps}\label{subsection-SC} 
Let $\zeta,\lambda$ and $r\ge 0$ be such  that $-1<\zeta-r \le \zeta<\lambda<1$.  
Set 
$$
F(z)=J^{-1}\int\limits_{1}^{z} \, \frac{ \sqrt{w-\zeta+r} }{\sqrt{w+1} \sqrt{w-\zeta} \sqrt{w-\lambda} \sqrt{w-1}} \, dw  .
$$
Then  $F(\lambda,\zeta,\zeta-r)=F$ conformally maps  $\Ha$ onto  the polygon $L(a,b,q)$ where  
$a=\frac{A}{J}$,  $b=\frac{B}{J}$, $q=\frac{Q}{J}$, and
\begin{align*}
&A=A(\lambda,\zeta,r)=-\imu \, \int\limits_{-\infty}^{-1} \,  \frac{ \sqrt{x-\zeta+r} }{    \sqrt{x+1} \sqrt{x-\zeta} \sqrt{x-\lambda} \sqrt{x-1}}  \, dx  \\
&B=B(\lambda,\zeta,r)=-\imu \, \int\limits_{\zeta-r}^{\zeta} \, \frac{ \sqrt{x-\zeta+r} }{    \sqrt{x+1} \sqrt{x-\zeta} \sqrt{x-\lambda} \sqrt{x-1}}  \, dx   \\ 
&J= J(\lambda,\zeta,r)=\int\limits_{1}^{\infty} \, \frac{ \sqrt{x-\zeta+r} }{    \sqrt{x+1} \sqrt{x-\zeta} \sqrt{x-\lambda} \sqrt{x-1}} \, dx,   \\
&Q=Q(\lambda,\zeta,r)=- \, \int\limits_{\zeta}^{\lambda} \,\frac{ \sqrt{x-\zeta+r} }{    \sqrt{x+1} \sqrt{x-\zeta} \sqrt{x-\lambda} \sqrt{x-1}}  \, dx. 
\end{align*}
Moreover,  $F(\infty)=P_1$, $F(-1)=P_2$, $F(\zeta)=P_3$, $F(\lambda)=P_4$, and  $F(\zeta-r)=Q$. 
(Note that $b=0$ if and only if $r=0$.)

The following lemma collates some of the results proved in the first three subsections of Section 9 in \cite{Markovic}. In particular, the formula (\ref{r-t}) below is the formula (32) in \cite{Markovic}.

\begin{lemma}\label{lemma-novosti-0} There exist continuous functions $\lambda,\zeta,r,a,b:[0,q_0)\to \R$ with the following properties for every $t\in[0,q_0)$
\begin{enumerate}
\item  $-1<\zeta(t)<\lambda(t)<1$, 
\vskip .1cm
\item $F=F(\lambda(t),\zeta(t),\zeta(t))$ maps $\Ha$ onto $L(a_0,0,q_0-t)$,
\vskip .1cm
\item  $F(\lambda(t),\zeta(t),\zeta(t)-r(t))$ maps $\Ha$ onto $L(a(t),b(t),q_0)$.
\vskip .1cm
\end{enumerate}
\noindent
Moreover, for small $t$ we have the estimate
\begin{equation}\label{r-t}
r(t)=\frac{(1+o(1)) t}{C_1 \log t^{-1}}.
\end{equation}
for some constant $C_1>0$, and where $o(1) \to 0$ when $t \to 0$. 
\end{lemma}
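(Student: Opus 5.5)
The plan is to normalise the Schwarz–Christoffel map so that each $t$ gives a real-analytic system of equations, solve it, and then read off the asymptotics of $r(t)$ from the logarithmic singularity that appears when $r\to 0$.

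\emph{Step 1: the path $\lambda(t),\zeta(t)$.} With $r=0$ the numerator $\sqrt{w-\zeta+r}$ cancels one factor of the denominator. The integrand becomes $\big((w+1)(w-\lambda)(w-1)\big)^{-1/2}$, and $F(\lambda,\zeta,\zeta)$ maps $\Ha$ onto a rectangle $L(a,0,q)$. Here
\[
a=A(\lambda,\zeta,0)/J(\lambda,\zeta,0),\qquad q=Q(\lambda,\zeta,0)/J(\lambda,\zeta,0),
\]
and $A,J$ depend only on $\lambda$. I would therefore first choose $\lambda(t)\equiv\lambda_0$, the value for which $A/J=a_0$. This value exists and is unique because $A/J$ is a monotone function of $\lambda$: it is a ratio of complete elliptic integrals. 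Next I choose $\zeta(t)\in(-1,\lambda_0)$ so that $Q(\lambda_0,\zeta,0)/J=q_0-t$. The map $\zeta\mapsto Q$ is continuous and strictly monotone, running from $0$ at $\zeta=\lambda_0$ to the full side length at $\zeta=-1$. The intermediate value theorem therefore gives a continuous $\zeta(t)$ for $t\in[0,q_0)$, which proves (1) and (2).

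\emph{Step 2: the perturbed family.} For (3) I keep $\lambda(t),\zeta(t)$ and look for $r=r(t)\ge0$ with $Q(\lambda,\zeta,r)/J(\lambda,\zeta,r)=q_0$; then I set $a(t)=A/J$ and $b(t)=B/J$. At $r=0$ the ratio equals $q_0-t<q_0$. I would show that $r\mapsto Q/J$ is continuous and increasing for small $r$, and that it exceeds $q_0$ for some $r<\zeta+1$. The intermediate value theorem then yields the smallest root $r(t)$, and $r(t)\to0$ as $t\to0$.

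\emph{Step 3: the asymptotic formula (\ref{r-t}).} This is the main obstacle. I expand $Q(\lambda,\zeta,r)-Q(\lambda,\zeta,0)$ for small $r$. Write
\[
\sqrt{x-\zeta+r}/\sqrt{x-\zeta}=1+\tfrac{r}{2(x-\zeta)}+\dots
\]
Integrated over $(\zeta,\lambda)$, this correction yields a term of the form $c\,r\log r^{-1}+O(r)$, where $c>0$ is computed from the remaining integrand evaluated at $x=\zeta$. The perturbations of $J$ and $A$ are only $O(r)$, since their integrals stay away from the singularity at $\zeta$. Hence
\[
\frac{Q}{J}(r)-\frac{Q}{J}(0)=C_1 r\log r^{-1}\,(1+o(1)),
\]
with $C_1$ depending continuously on $(\lambda,\zeta)$, which converge to $(\lambda_0,\zeta(0))$.

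This expansion also gives the strict monotonicity needed in Step 2. Setting the left-hand side equal to $t$ gives $C_1 r\log r^{-1}\sim t$. Inverting, $\log r^{-1}\sim\log t^{-1}$, and therefore $r(t)=\frac{(1+o(1))t}{C_1\log t^{-1}}$.

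The delicate part is making the $r\log r^{-1}$ expansion uniform in $(\lambda,\zeta)$ near $t=0$. I would handle it by splitting the integral at $x=\zeta+\sqrt r$ (or at $x=\zeta+\delta$) and estimating each piece directly. This essentially reproduces the computation of Section 9 in \cite{Markovic}, which can be cited verbatim.
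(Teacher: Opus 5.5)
Your reconstruction follows the same route as the source the paper relies on. The paper gives no argument of its own here; it imports the statement from Section~9 of \cite{Markovic}, where exactly this Schwarz--Christoffel computation is done. There, $\lambda$ is frozen because $A/J$ at $r=0$ depends only on $\lambda$, $\zeta(t)$ is read off the marked rectangle, and $r(t)$ comes from the $\tfrac12 h(\zeta)\,r\log r^{-1}$ term in $Q$ weighed against the $O(r)$ change in $J$, where $h(x)=\big((x+1)(\lambda-x)(1-x)\big)^{-1/2}$. Your Steps~1 and~3 are fine, including the uniformity in $\zeta$ near $\zeta(0)$ and the inversion $\log r^{-1}\sim\log t^{-1}$.

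The step that does not work as written is continuity of $r(t)$ on all of $[0,q_0)$, which the lemma asserts.
\begin{itemize}
\item The ``smallest root'' of a continuous family is in general only lower semicontinuous in $t$.
\item Your monotonicity is claimed only for small $r$, which says nothing when $t$ is bounded away from $0$, since then $r(t)$ is not small.
\item An asymptotic formula for the difference $\frac{Q}{J}(r)-\frac{Q}{J}(0)$ does not imply monotonicity even for small $r$. For that you need the derivative version $\partial_r Q=\tfrac12 h(\zeta)\log r^{-1}(1+o(1))$.
\end{itemize}

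All of this is repaired by a global monotonicity argument. For fixed $\lambda,\zeta$ and $0<r<\zeta+1$ one has
\[
\partial_r\log Q=\int_\zeta^\lambda\frac{d\mu_Q(x)}{2(x-\zeta+r)},\qquad
\partial_r\log J=\int_1^\infty\frac{d\mu_J(x)}{2(x-\zeta+r)},
\]
where $d\mu_Q=Q^{-1}\sqrt{(x-\zeta+r)/(x-\zeta)}\,h(x)\,dx$ is a probability measure on $(\zeta,\lambda)$, and $\mu_J$ is the analogous probability measure on $(1,\infty)$. Since $x\mapsto 1/(2(x-\zeta+r))$ is decreasing and $\lambda<1$, we get $\partial_r\log Q>\partial_r\log J$. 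Hence $r\mapsto Q/J$ is strictly increasing on $[0,\zeta+1)$.

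You also need the endpoint behaviour, which your existence claim (``exceeds $q_0$ for some $r<\zeta+1$'') asserts without proof. As $r\to\zeta+1$ the integrals converge monotonically to those of the rectangle map with prevertices $\zeta,\lambda,1,\infty$. In that rectangle $\overline{P_3P_4}$ and $\overline{P_5P_1}$ are opposite sides, so $Q/J\to 1>q_0$.

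Together these give a unique root $r(t)$ for every $t$, and therefore continuity of $r$, $a=A/J$ and $b=B/J$ on the whole interval. For the paper's actual use only small $t$ matters. There your local argument suffices once it is based on the derivative expansion rather than on the expansion of the difference.
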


\subsection{The second path}

We have

\begin{definition} We define $\sigma_2:[0,q_0)\to \T(S_0)$ by letting $\sigma_2(t)=S(a(t),b(t),q_0)$, where $a(t)$, $b(t)$, and $r(t)$, are the functions from Lemma \ref{lemma-novosti-0}.
\end{definition}

Let  
\begin{equation}\label{eq-Gt}
G_t=F(\lambda(t),\zeta(t),\zeta(t)-r(t)) \circ F^{-1}(\lambda(t),\zeta(t),\zeta(t)).
\end{equation}
Then $G_t:L(a_0,0,q_0-t) \to L(a(t),b(t),q_0)$ is a conformal mapping  which maps the vertices  
$P_1,P_2,P_3,P_4,P_5$ on  $L(a_0,0,q_0-t)$ to the respective point on $ L(a(t),b(t),q_0)$.
 
This implies  that the unmarked 3-punctured discs $S(a_0,0,q_0-t)$ and $S(a(t),b(t),q_0)$ are biholomorphic to each other. The biholomorphism between them is obtained by doubling the map $G_t$ (the double of $G_t$ is also denoted by $G_t$). However, $G_t$ does not respect the markings on the boundary of the two 3-punctured discs and therefore $S(a_0,0,q_0-t)\ne S(a(t),b(t),q_0)$ in $\T(S_0)$.

\begin{remark} 
If we glue the two copies of the edges $\overline{P_5P_1}$, and turn $S(a(t),b(t),q_0)$ and $S(a_0,0,q_0-t)$ into 5-punctured spheres (as it was done in \cite{Markovic}), then $S(a(t),b(t),q_0)$ and $S(a_0,0,q_0-t)$ represent the same point in $\T^0_{0,5}$.
\end{remark}

\subsection{The key properties of the two paths} While in the present case the paths $\sigma_1$ and $\sigma_2$ are not the same, we can estimate the Teichm\"uller distance between them. The following lemma is proved in the next section.

\begin{lemma}\label{lemma-12} There exists a  constant $C>0$ such that 
\begin{equation}\label{eq-12}
\left|\Fol(\sigma_1(t))-\Fol(\sigma_2(t))\right|\le C\frac{t^2}{\log^2 t^{-1}},
\end{equation}
where $\Fol:\T(S_0)\to \Ha$ is the holomorphic map from Lemma \ref{lemma-texnika}.  
\end{lemma}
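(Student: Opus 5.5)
The plan is to compare $\sigma_1(t)$ and $\sigma_2(t)$ through the biholomorphism $G_t$ of (\ref{eq-Gt}). The two points differ only because $G_t$ does not respect the boundary marking, so we will correct $G_t$ near the boundary by a quasiconformal map of small dilatation and then Taylor expand the holomorphic map $\Fol$ to second order.

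\textbf{Step 1 (the boundary defect of $G_t$).} Restricted to the bottom edge $\overline{P_5P_1}$, the doubled map $G_t$ is a real-analytic self-map of the edge fixing the endpoints. The Schwarz--Christoffel formulas of Subsection \ref{subsection-SC} depend smoothly on $r$. The two maps $F(\lambda(t),\zeta(t),\zeta(t))$ and $F(\lambda(t),\zeta(t),\zeta(t)-r(t))$ differ only through the factor $\sqrt{w-\zeta+r}/\sqrt{w-\zeta}$, which is $1+O(r)$ uniformly on $[1,\infty)$ and on $(-\infty,-1]$. From this we will show that $G_t(x)=x+r(t)v(x)+O(r(t)^2)$ on the edge, for an explicit smooth function $v$ vanishing at $P_5$ and $P_1$. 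Here we also use that $a(t)-a_0=O(r)$ and $b(t)=O(r)$; the latter is automatic, since $b=0$ iff $r=0$.

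\textbf{Step 2 (correcting the marking).} In a fixed collar of the boundary in $S(a(t),b(t),q_0)$, away from the punctures and from $Q$, we interpolate between the identity and the inverse of the boundary correction. Since this collar is rectangular in the $\psi$-coordinates, we can take the extension $(x,y)\mapsto (x-r v(x)\chi(y),y)$ with a fixed cutoff $\chi$. Composing with $G_t$ gives a quasiconformal map $\sigma_2(t)\to\sigma_1(t)$ that is the identity on the boundary, hence respects the markings. Its Beltrami coefficient $\nu_t$ on $S(a(t),b(t),q_0)$ satisfies $\nu_t=r\,\nu^{(1)}+O(r^2)$ in $L^\infty$, where $\nu^{(1)}=\bar\partial(v\chi)$ is explicit. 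Thus $\sigma_1(t)=[\nu_t]$ in the Beltrami coordinates based at $\sigma_2(t)$.

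\textbf{Step 3 (Taylor expansion).} The map $\nu\mapsto \Fol([\nu])$ is holomorphic on $\BD_1(S(a(t),b(t),q_0))$ and takes values in $\Ha$. By the Schwarz lemma and Cauchy estimates on a ball of fixed radius, applied after composing with a M\"obius map to $\D$ centred at $\Fol(\sigma_2(t))$, its second derivative is bounded uniformly in $t$. Note that $\Fol(\sigma_2(t))=(a(t)+b(t)q_0)\imu$ stays in a compact set. Hence
$$
\Fol(\sigma_1(t))-\Fol(\sigma_2(t))=d\Fol(\nu_t)+O(r^2).
$$
By Lemma \ref{lemma-ab}, $d\Fol(\nu_t)=-\imu\int \nu_t\,\psi(a(t),b(t),q_0)$.

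\textbf{Step 4 (the main obstacle).} We must show that the first-order term $-\imu\, r\int \nu^{(1)}\psi$ vanishes, or is itself $O(r^2)$. Since $\nu^{(1)}=\bar\partial(v\chi)$ and $\psi$ is holomorphic, Stokes reduces $\int\nu^{(1)}\psi$ to a boundary integral of $v\,\psi$ over the two copies of $\overline{P_5P_1}$. There $\psi=dx^2$ is real. The two copies carry opposite orientations while $G_t$ induces the same displacement $v$ on both, since it is the double of a single map. We therefore expect the two contributions to cancel exactly. If this symmetry argument fails, we fall back on computing $\int v\,dx$ directly from the Schwarz--Christoffel integrals and checking that it is $O(r)$, using that the edge endpoints are fixed. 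With the first-order term removed, (\ref{r-t}) gives
$$
|\Fol(\sigma_1(t))-\Fol(\sigma_2(t))|=O(r(t)^2)=O\!\left(\frac{t^2}{\log^2 t^{-1}}\right),
$$
which is (\ref{eq-12}).
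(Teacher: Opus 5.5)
This is essentially the paper's proof, and your Step 4 symmetry argument is correct, so no fallback is needed. In the paper the collar is the double of $[0,1]\times[0,a]\subset\Pi_2(t)$ with the linear cutoff $1-y/a$, and the cancellation you anticipate occurs exactly: the terms $\pm\frac{\imu}{2a}(g_t(x)-x)$ cancel between the two copies, and the $(g_t'-1)$ term integrates to zero because $g_t$ fixes the endpoints.

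One caution: drop the linearization $r(t)v$ and build the collar map from the exact displacement $g_t-\id$. Only then is the composite exactly the identity on the boundary, and your Stokes cancellation applies to it verbatim, leaving only $O(r^2)$ terms. By contrast, a $v$ independent of $t$ would in general leave an error of order $r(t)\,t$, because the Schwarz--Christoffel parameter $\zeta(t)$ moves at rate comparable to $t\gg r(t)$; that is exactly the order of the $\beta_2$-term you need to beat.
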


Since $\sigma_2(t)$ lies in the locus $S(a,b,q_0)$, by Lemma \ref{lemma-texnika}   one can compute the value of $\Fol$ on $\sigma_2(t)$. The following lemma is a restatement Lemma 8.2  in \cite{Markovic}.
\begin{lemma}\label{lemma-sigma2} Suppose There are constants $\beta_1$ and $\beta_2\neq 0$ such that
$$
\Fol(\sigma_2(t))=\Fol(\sigma_2(0))+\beta_1(1+o(1))\frac{t}{\log t^{-1}}+\beta_2(1+o(1))\frac{t^2}{\log t^{-1}}+o\left(\frac{t^2}{\log t^{-1}}\right).
$$
\end{lemma}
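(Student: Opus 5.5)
The plan is to compute $\Fol(\sigma_2(t))$ directly from the closed formula of Lemma \ref{lemma-texnika}, which gives
$$
\Fol(\sigma_2(t))=\big(a(t)+b(t)q_0\big)\imu .
$$
The statement then reduces to an asymptotic expansion of $a(t)+b(t)q_0$ as $t\to 0$. Lemma \ref{lemma-novosti-0} writes $a(t)=A/J$ and $b(t)=B/J$, where $A,B,J$ are the Schwarz--Christoffel integrals evaluated at $(\lambda(t),\zeta(t),r(t))$. Moreover, the three parameters $\lambda(t),\zeta(t),r(t)$ are pinned down by two conditions: the map $F(\lambda,\zeta,\zeta)$ must produce $L(a_0,0,q_0-t)$, and the perturbed map $F(\lambda,\zeta,\zeta-r)$ must have third shape parameter equal to $q_0$.

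First I will expand $A,B,J,Q$ in the small parameter $r$, with $\lambda,\zeta$ fixed. The integrand carries the factor $\sqrt{x-\zeta+r}/\sqrt{x-\zeta}$.
\begin{itemize}
\item For $A$ and $J$, the integration range stays away from $\zeta$, so the dependence on $r$ is real analytic: $A=A_0+A_1r+O(r^2)$, and similarly for $J$.
\item For $B$, the integral runs over an interval of length $r$ with integrable singularities at both ends. Substituting $x=\zeta-r+rs$ gives $B=c\,r+O(r^2)$ with an explicit constant $c\neq 0$.
\item For $Q$, the singularity at $\zeta$ produces a term of the form $r\log r^{-1}$. This is the origin of formula (\ref{r-t}).
\end{itemize}

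Next I will substitute $r=r(t)$ from (\ref{r-t}), so that $r\sim t/(C_1\log t^{-1})$, and expand $\lambda(t),\zeta(t)$ to second order in $t$. These two are smooth in $t$, since they are governed by the $b=0$ polygon $L(a_0,0,q_0-t)$, which varies smoothly. The first-order term in $a+bq_0$ is then $\beta_1 t/\log t^{-1}$, coming from $B\approx cr$ and from $A_1r$. The second-order terms of size $t^2/\log t^{-1}$ arise from cross terms: $r$ multiplied by the linear variation of $\lambda(t),\zeta(t)$. The $r^2$ terms are of size $t^2/\log^2t^{-1}$, which is absorbed into the $o$-term. Since this is exactly Lemma~8.2 of \cite{Markovic}, with the identical $L$-polygon, Schwarz--Christoffel data and definition of $\sigma_2$, I would mostly cite that computation and check that nothing changes.

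The identification is justified because $\Fol(\sigma_2(t))$ is given by the same linear formula as in the $5$-punctured case. Only the marking at the boundary differs, and that affects $\sigma_1$, not $\sigma_2$.

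The main obstacle is proving $\beta_2\neq 0$. This means isolating the coefficient of $t^2/\log t^{-1}$: I must verify that the cross terms between $r$ and $\lambda'(0),\zeta'(0)$ do not cancel. I will write that coefficient as an explicit combination of derivatives of the integrals $A,J$ and of $c$, and then follow \cite{Markovic} in checking that it is nonzero, for example by a sign argument at the fixed $(a_0,q_0)$. The remaining claims follow from the expansions above together with the remainder bounds.
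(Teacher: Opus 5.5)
Your plan matches the paper's argument. Lemma \ref{lemma-texnika} gives $\Fol(\sigma_2(t))=(a(t)+b(t)q_0)\imu$, where $a(t),b(t)$ are literally the same functions as in \cite{Markovic}, and the expansion, including $\beta_2\neq 0$, is then taken verbatim from Lemma 8.2 there; the paper does no more than this.

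Your sketch of where the $t/\log t^{-1}$ and $t^2/\log t^{-1}$ terms come from is consistent with that computation, though the paper does not reproduce it. One simplification: $\lambda(t)$ is actually constant, because $F(\lambda,\zeta,\zeta)$ does not involve $\zeta$ and $a_0$ is fixed. So the $t^2/\log t^{-1}$ term comes only from the product $\zeta'(0)\,t\,r(t)$.
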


\subsection{Proof of Theorem  \ref{theorem: 3-punctured disc case}} We prove the theorem assuming Lemma \ref{lemma-12} and Lemma \ref{lemma-sigma2}. The proof is by contradiction. Suppose   $d_C=d_\T$ on $\tau^{\psi(a_0,b_0,q_0)}$. Then there exists  a holomorphic function   $\Fol :\Teich(S_0) \to \Ha$ satisfying the equality from  Lemma \ref{lemma-sigma2}.  

On the other hand, from Lemma \ref{lemma-12} we have that 
$$
\left|\Fol(\sigma_1(t))-\Fol(\sigma_2(t))\right|= O\left(\frac{t^2}{\log^2 t^{-1}}\right)=o\left(\frac{t^2}{\log t^{-1}}\right).
$$
Replacing this into  the equality from  Lemma \ref{lemma-sigma2} we get 
$$
\Fol(\sigma_1(t))=\Fol(\sigma_1(0))+\beta_1(1+o(1))\frac{t}{\log t^{-1}}+\beta_2(1+o(1))\frac{t^2}{\log t^{-1}}+o\left(\frac{t^2}{\log t^{-1}}\right),
$$
where $\beta_2\ne 0$. By Lemma \ref{lemma-kraj-1} the path $\sigma_1$ is infinitely differentiable. But this implies that $\beta_1=\beta_2=0$  which contradicts the fact that $\beta_2\ne 0$. This contradiction proves the theorem.

\section{Proof of Lemma \ref{lemma-12}}

Recall the conformal map $G_t:S(a_0,0,q_0-t) \to S(a(t),b(t),q_0)$ from (\ref{eq-Gt}). Let $g_t$ be the restriction of $G_t$ to the boundary of $S(a_0,0,q_0-t)$ which is  the union of  two copies of $\overline{P_5P_1}$ with their endpoints identified. 
We equip the surface  $S(a(t),b(t),q_0)$ with the non-standard marking $S(a_0,b_0,q_0) \to S(a(t),b(t),q_0)$ which differs from the standard marking in that it is equal to $g_t$ on  the boundary of $S(a_0,0,q_0-t)$.

\begin{definition} We let  $\wh{S}(a(t),b(t),q_0) \in \T(S_0)$ denote the Riemann surface $S(a(t),b(t),q_0)$ with its non-standard marking. 
\end{definition}

By construction, we have that 
\begin{equation}\label{eq-jed}
\sigma_1(t)=\wh{S}(a(t),b(t),q_0),\quad \forall t\in [0,q_0),
\end{equation}
in $\T(S_0)$. Thus, to compare the value of $\Fol$ at $\sigma_1(t)$ and $\sigma_2(t)$ is the same as comparing the value of $\Fol$ at $S(a(t),b(t),q_0)$ and $\wh{S}(a(t),b(t),q_0)$, which represent the same 3-punctured disc with the standard and the non-standard markings respectively.

\subsection{The derivative of the boundary map} After identifying the interval $\overline{P_5P_1}$ with the interval $[0,1]$ we obtain the maps $g_t:[0,1]\rightarrow[0,1]$. With this notation we have the following lemma.

\begin{lemma}\label{lemma-bm} The estimates 
$$
g_t(x)=x+O(r(t)),\quad\text{and}\quad \frac{dg_t}{dx}(x)=1+O(r(t)),
$$
hold   for every  $x\in [0,1]$, where $r(t)$ is the function from Lemma \ref{lemma-novosti-0} which satisfies the estimate (\ref{r-t}).
\end{lemma}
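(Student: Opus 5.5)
We work directly with the explicit Schwarz--Christoffel formula of Subsection \ref{subsection-SC}. Write $F_0=F(\lambda(t),\zeta(t),\zeta(t))$ and $F_r=F(\lambda(t),\zeta(t),\zeta(t)-r(t))$, so that $G_t=F_r\circ F_0^{-1}$. On the bottom edge $\overline{P_5P_1}$ the preimage under both maps is the same real interval $[1,\infty]$, with $F(1)=P_5$ and $F(\infty)=P_1$. In both polygons this edge is horizontal, and after normalisation by $J$ it has length $1$. Hence, for $s\in[1,\infty]$,
$$
x=F_0(s)=\frac{1}{J_0}\int_1^s h_0(w)\,dw,\qquad g_t(x)=F_r(s)=\frac{1}{J_r}\int_1^s h_r(w)\,dw,
$$
where $h_0(w)=\big((w+1)(w-\lambda)(w-1)\big)^{-1/2}$ (the factors $\sqrt{w-\zeta}$ cancel when $r=0$) and
$$
h_r(w)=h_0(w)\sqrt{\frac{w-\zeta+r}{w-\zeta}}.
$$
Here $J_0=J(\lambda,\zeta,0)$ and $J_r=J(\lambda,\zeta,r)$.

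The first step is a pointwise estimate for $w\ge 1$. Since $\zeta(t)<\lambda(t)<1$ and these parameters are continuous on $[0,q_0)$, for small $t$ we have $w-\zeta\ge 1-\zeta(t)\ge c>0$ uniformly. Therefore
$$
\sqrt{\frac{w-\zeta+r}{w-\zeta}}=1+\frac{r}{2(w-\zeta)}+O(r^2)=1+O(r),
$$
uniformly in $w\in[1,\infty)$, and moreover the error term is $O(r/w)$ as $w\to\infty$. It follows that $h_r=h_0\,(1+O(r))$ uniformly on $[1,\infty)$. Integrating, $J_r=J_0(1+O(r))$, and $\int_1^s h_r=(1+O(r))\int_1^s h_0$ uniformly in $s$. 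Here we use that $h_0$ is integrable on $[1,\infty)$: it is $\sim w^{-3/2}$ at infinity and has an integrable $(w-1)^{-1/2}$ singularity at $1$. This gives $g_t(x)=x(1+O(r))=x+O(r)$ for $x\in[0,1]$.

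For the derivative we use the chain rule:
$$
\frac{dg_t}{dx}=\frac{F_r'(s)}{F_0'(s)}=\frac{J_0}{J_r}\,\frac{h_r(s)}{h_0(s)}=\frac{J_0}{J_r}\sqrt{\frac{s-\zeta+r}{s-\zeta}}=1+O(r),
$$
uniformly in $s\in[1,\infty]$. The common singular factors $(s-1)^{-1/2}$ cancel in the ratio, so the estimate also holds at the endpoints $x=0$ and $x=1$.

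The only point needing care is uniformity of the constants in $t$. This requires a lower bound on $1-\zeta(t)$ and an upper bound on $J_0^{-1}$ for small $t$. Both follow from the continuity of $\lambda,\zeta$ on $[0,q_0)$ with $\zeta(0)<\lambda(0)<1$ (Lemma \ref{lemma-novosti-0}), and from $J_0>0$ depending continuously on $t$. We also use that $r(t)\to 0$ by (\ref{r-t}). Combining these yields the lemma.
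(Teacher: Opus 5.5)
Your proposal is correct and follows essentially the same route as the paper: the chain rule gives $\frac{dg_t}{dx}=\frac{J(\lambda,\zeta,0)}{J(\lambda,\zeta,r)}\sqrt{\frac{s-\zeta+r}{s-\zeta}}$ with $s\ge 1$, and both factors are $1+O(r)$ uniformly because $1-\zeta(t)$ is bounded below for $t\in[0,t_0]$. The paper's displayed ratio has $\lambda$ where it should have $\zeta$, a typo, and your version is the correct one. The only cosmetic difference is that you get $g_t(x)=x+O(r)$ by comparing $\int_1^s h_r$ with $\int_1^s h_0$ directly, whereas the paper integrates the derivative estimate.
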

\begin{proof} 
Let $f_1,f_2:[1,+\infty)\rightarrow[0,1)$ denote  the restrictions of  $F(\lambda(t),\zeta(t),\zeta(t)-r(t))$ and  $F(\lambda(t),\zeta(t),\zeta(t))$ to the interval $[1,\infty)$ respectively. Let $0<t_0<q_0$. Since $-1<\zeta(t)<\lambda(t)<1$, and $\zeta(t),\lambda(t)$ are continuous, there exists a constant $\delta_0>0$ such that 
\begin{equation}\label{eq-delta0}
-1+\delta_0\le\zeta(t)<\lambda(t)\le1-\delta_0, \quad\quad \forall t\in [0,t_0].
\end{equation}

To ease the notation  we let $\lambda=\lambda(t)$, $\zeta=\zeta(t)$, and $r=r(t)$.
Then
\begin{align*}
&f_1(x)=J(\lambda,\zeta,0)^{-1}\int_1^x\frac{1}{\sqrt{y+1}\sqrt{y-\lambda}\sqrt{y-1}}dy,\\
&f_2(x)=J(\lambda,\zeta,r)^{-1}\int_1^x\frac{\sqrt{y-\zeta+r}}{\sqrt{y+1}\sqrt{y-\zeta}\sqrt{y-\lambda}\sqrt{y-1}}dy.
\end{align*}
We have $g_t=f_2\circ f_1^{-1}$. By the chain rule we have
\[\frac{dg_t}{dx}(x)=\frac{f_2'(y)}{f_1'(y)}=\frac{J(\lambda,\zeta,0)}{J(\lambda,\zeta,r)}\frac{\sqrt{y-\lambda+r}}{\sqrt{y-\lambda}},\]
where $y=f_1^{-1}(x)$.

From the definition of $J$ (see Section \ref{subsection-SC} above), and using (\ref{eq-delta0}), one easily obtains the estimate 
$$
\frac{J(\lambda,\zeta,0)}{J(\lambda,\zeta,r)}=1+O(r),
$$
when $t\in [0,t_0]$. Again using (\ref{eq-delta0}), and since $y\ge 1$,  we obtain 
$$
\frac{\sqrt{y-\lambda+r}}{\sqrt{y-\lambda}}=1+O(r).
$$ 
Combining the two previous estimates yields
$$
\frac{d}{dx}g_t(x)=1+O(r). 
$$
Integrating this equality gives $g_t(x)=x+O(r)$. The proof is complete.
\end{proof}

\subsection{An auxiliary quasiconformal map}

In this subsection we construct a quasi-conformal homeomorphism $$
f=f_t:S(a(t),b(t),q_0)\to S(a(t),b(t),q_0)
$$
realising the non-standard marking of the surface $S(a(t),b(t),q_0)$ which we denoted by $\wh{S}(a(t),b(t),q_0)$. 
Therefore, $f$  maps the vertices to vertices, and its  restriction to the boundary is given by $g_t$. 

Recall that $S(a(t),b(t),q_0)$ is partitioned into annuli $\Pi_1=\Pi_1(t)$ and $\Pi_2=\Pi_2(t)$ (see Figure \ref{fig:Ls1} above). The annulus $\Pi_2$ is  is obtained by gluing together two copies  of the rectangle $[0,1]\times[0,a(t)]$ along the vertical edges. Recall that $a(t)$ is a positive continuous function for $t\in [0,q_0)$. If we restrict to $[0,t_0]$ for some $t_0<q_0$, we find that there exists a constant $a>0$ such that $0<a<a(t)$ for every $t\in[0,t_0]$. 

Therefore each  $\Pi_2(t)$, where  $t\in[0,t_0]$, contains the annulus $P$ which is the double of the rectangle $[0,1]\times[0,a]$.
We define $f$ to be equal to  the identity map away from $P$. On $P$, the map $f$ is defined as 
 \begin{equation}\label{fa}
 f(x+yi)=\frac{y}{a}x+\left(1-\frac{y}{a}\right)g_t(x)+yi,
 \end{equation}
 where $(x,y)\in [0,1]\times[0,a]$.
 
\begin{lemma}\label{lemma-faa}
Let $\mu_t$ denote the Beltrami dilatation of $f_t$. Then 
\begin{equation}\label{eq-mu}
||\mu_t||_\infty=O\big(r(t)\big),
\end{equation}
and
\begin{equation}\label{eq-mu-1}
\int\limits_{S(a(t),b(t),q_{0})} \mu_t \psi(a(t),b(t),q_0)=o\big(r(t)\big).
\end{equation}
\end{lemma}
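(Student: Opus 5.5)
The plan is to compute the Beltrami dilatation of $f=f_t$ explicitly on the annulus $P$, since $f$ is the identity off $P$ and so $\mu_t\equiv 0$ on $S(a(t),b(t),q_0)\setminus P$. On each of the two rectangles making up $P$ we use the natural coordinate $z=x+y\imu$ in which $\psi(a(t),b(t),q_0)=dz^2$. From (\ref{fa}) we get
$$
f_x=1+\Big(1-\frac{y}{a}\Big)\big(g_t'(x)-1\big),\qquad f_y=\frac{x-g_t(x)}{a}+\imu .
$$
Hence
$$
f_{\bar z}=\frac12\Big[\Big(1-\frac{y}{a}\Big)\big(g_t'(x)-1\big)+\imu\,\frac{x-g_t(x)}{a}\Big],\qquad f_z=1-f_{\bar z}+\Big(1-\frac{y}{a}\Big)\big(g_t'(x)-1\big).
$$
By Lemma \ref{lemma-bm}, $g_t(x)-x=O(r)$ and $g_t'(x)-1=O(r)$ uniformly in $x\in[0,1]$. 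Since $a$ is a fixed positive constant for $t\in[0,t_0]$, this gives $f_{\bar z}=O(r)$ and $f_z=1+O(r)$. Therefore $\mu_t=f_{\bar z}/f_z=O(r)$, which proves (\ref{eq-mu}).

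For (\ref{eq-mu-1}) we write
$$
\mu_t=f_{\bar z}-f_{\bar z}\,\frac{f_z-1}{f_z}=f_{\bar z}+O(r^2).
$$
Since $P$ has bounded area (namely $2a$), it suffices to show that $\int_P f_{\bar z}\,dx\,dy$ vanishes, or is at least $o(r)$. This is the main point; the rest is routine.

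To handle it, view $P$ as a flat cylinder of circumference $2$ and height $a$ in the $\psi$-coordinate, and apply Stokes in the form
$$
\iint_P \pt_{\bar z}(f-z)\,dx\,dy=\frac{1}{2\imu}\oint_{\pt P}(f-z)\,dz .
$$
On the top circle $y=a$ we have $f=z$, so there is no contribution. On the bottom circle, which is the boundary of the disc, $f-z$ equals $g_t(x)-x$ on the first copy. On the second copy the flat coordinate is the reflected one, so there the boundary map is $x\mapsto 2-g_t(2-x)$ for $x\in[1,2]$. The boundary integral is then
$$
\int_0^1\big(g_t(x)-x\big)\,dx+\int_1^2\big(2-g_t(2-x)-x\big)\,dx=\int_0^1\big(g_t(u)-u\big)\,du+\int_0^1\big(u-g_t(u)\big)\,du=0 .
$$
Also $\pt_{\bar z}z=0$, so $\int_P f_{\bar z}=0$.

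As a check, one can also compute directly on each rectangle. The term $\big(1-\frac{y}{a}\big)\big(g_t'-1\big)$ integrates in $x$ to $\big(1-\frac{y}{a}\big)\big(g_t(1)-g_t(0)-1\big)=0$, because $g_t$ fixes the endpoints of $[0,1]$. The term $\imu\,\frac{x-g_t(x)}{a}$ appears with opposite signs on the two copies, because of the orientation reversal in the doubling.

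The step most likely to need care is getting the orientation and coordinate on the second copy right, so that the two $\imu(x-g_t)/a$ contributions genuinely cancel rather than add. Once that is settled, we obtain
$$
\int\limits_{S(a(t),b(t),q_0)} \mu_t\,\psi(a(t),b(t),q_0)=0+O(r^2)=o\big(r(t)\big).
$$
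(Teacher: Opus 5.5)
Your proposal is correct and follows the paper's argument: compute $f_z$ and $f_{\bar z}$ from (\ref{fa}), get $\|\mu_t\|_\infty=O(r)$ from Lemma \ref{lemma-bm}, replace $\mu_t$ by $f_{\bar z}$ up to $O(r^2)$, and show that $\int_P f_{\bar z}=0$. Your ``check'' paragraph is exactly the paper's proof, namely Fubini on the $(1-\frac{y}{a})(g_t'-1)$ term plus cancellation of the $\imu(x-g_t)/a$ terms between the two copies. The Stokes computation on the flat cylinder is only a tidier way to see that same cancellation, by reducing it to the boundary integral of $g_t(x)-x$ against its reflected copy.
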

\begin{proof}
We have
\begin{align}\label{eq10}
\begin{split}
    &f_z=1+\frac{1}{2}\left(1-\frac{y}{a}\right)\left(\frac{dg_t}{dx}(x)-1\right)+\frac{i}{2a}(g_t(x)-x),\\
    &f_{\bar{z}}=\frac{1}{2}\left(1-\frac{y}{a}\right)\left(\frac{dg_t}{dx}(x)-1\right)-\frac{i}{2a}(g_t(x)-x)
\end{split}
\end{align}
on the first copy of $[0,1]\times[0,a]$, and 
\begin{align}\label{eq11}
\begin{split}
    &f_z=1+\frac{1}{2}\left(1-\frac{y}{a}\right)\left(\frac{dg_t}{dx}(x)-1\right)-\frac{i}{2a}(g_t(x)-x),\\
    &f_{\bar{z}}=\frac{1}{2}\left(1-\frac{y}{a}\right)\left(\frac{dg_t}{dx}(x)-1\right)+\frac{i}{2a}(g_t(x)-x)
\end{split}
\end{align}
on the other copy of $[0,1]\times[0,a]$. From Lemma \ref{lemma-bm} we find
\[f_z=1+O(r(t)),\quad f_{\bar{z}}=O(r(t)),\]
which proves (\ref{eq-mu}).

Let $\mu^1_t$ be the value of $\mu_t$ on the first copy of  $[0,1]\times[0,a]$, and $\mu^2_t$ be the value of $\mu_t$ on the second  copy of  $[0,1]\times[0,a]$. Then
\begin{align*}
\int\limits_{S(a(t),b(t),q_{0})} \mu_t \psi(a(t),b(t),q_0)&=\int\limits_{[0,1]\times[0,a]} (\mu^1_t+\mu^2_t)\, dz^2 \\
&=\int\limits_{[0,1]\times[0,a]} \frac{1}{2}\left(1-\frac{y}{a}\right)\left(\frac{dg_t}{dx}(x)-1\right)\, dxdy+o(r(t)).
\end{align*}
The last integral is easily seen to be equal to zero using Fubini's theorem. This proves the lemma.
\end{proof}

\subsection{Proof of Lemma \ref{lemma-12}} To ease the notation we let $S_t=S(a(t),b(t),q_0)$. Note that $\T(S_0)\equiv \T(S_t)$. Recall the holomorphic map $\BD_1(S_t)\to \T(S_t)$. Define $\Phi:\BD_1(S_t)\to \Ha$ to be the composition of $\Fol:\T(S_t)\to \Ha$, and the map $\BD_1(S_t)\to \T(S_t)$. Then $\Phi$ is holomorphic. Moreover, from the construction we have
$$
\Phi(0)=\Fol(\sigma_2(t)),\quad \text{and}\quad \Phi(\mu_t)=\Fol(\sigma_1(t)).
$$
From Lemma \ref{lemma-ab} (in particular, from (\ref{eq-ab}))   we find that 
$$
\Phi(0)- \Phi(\mu_t)=-\imu \int\limits_{S_{t}} \mu_t \psi(a(t),b(t),q_0) + O(||\mu_t||^2_\infty).
$$
Thus, from (\ref{eq-mu-1}), and then (\ref{eq-mu-1}), we get
$$
\Phi(0)- \Phi(\mu_t)=O(||\mu_t||^2_\infty)=O(r^2(t)).
$$
Replacing (\ref{r-t}) in the last estimate gives
$$
\Phi(0)- \Phi(\mu_t)=O(r^2(t))=\frac{(1+o(1))^2 t^2}{C_1 \log^2 t^{-1}}=O\left(\frac{t^2}{\log^2 t^{-1}}\right).
$$
The lemma is proved.

\section{The appendix: Linear functionals on the tangent space of $\T(S)$} 
The purpose of the appendix is to prove Proposition \ref{prop-tame}. 
Let $S$ be a topologically finite Riemann surface. Recall that every non-zero holomorphic quadratic differential 
$\varphi\in \QD(S)$ yields the Beltrami differential $\mu_\varphi=\frac{|\varphi|}{\varphi}\in \BD(S)$. 
\begin{lemma}\label{lemma-tame} Suppose that $\varphi \in \QD(S)$ is is a Jenkins-Strebel differential. Then
$$
\lim\limits_{n\to \infty} \int\limits_S \mu_\varphi \psi_n = 0,
$$
for every sequence $\{\psi_n\}_{n\in \N} \subset \QD(S)$  which converges to zero on every compact subset of $S$.
\end{lemma}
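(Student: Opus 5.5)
We split $S$ into a large compact part, where $\psi_n$ is uniformly small, and the neighbourhoods of the ends, where we use the structure of $\varphi$ in the flat cylinders. Since $\varphi$ is Jenkins--Strebel, $S$ is the closure of finitely many annuli $\Pi_j$ foliated by closed horizontal trajectories, each of finite height $h_j$ and circumference $c_j$ in the $|\varphi|$-metric. The only non-compact directions of $S$ are the punctures and the ideal boundary components. Each such end is contained in a single annulus $\Pi_j$ which is peripheral. Fix $\epsilon>0$. For each peripheral annulus we cut off a collar $U_j(\epsilon)\subset\Pi_j$ adjacent to the end, namely the set of trajectories at $|\varphi|$-height less than $\epsilon$ from the end. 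Let $U(\epsilon)$ be the union of these collars. The complement $E=S\setminus U(\epsilon)$ is then compactly contained in $S$ (together with the finitely many critical points, which lie in $E$). By hypothesis $\psi_n\to0$ uniformly on $E$, so $\int_E|\mu_\varphi\psi_n|\le\int_E|\psi_n|\to0$.

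The main obstacle is controlling $\int_{U_j(\epsilon)}\mu_\varphi\psi_n$. Note that we cannot simply bound this by $\int|\psi_n|$: the sequence $\psi_n$ need not be small in $L^1$ (for example if it escapes to the boundary), so some cancellation is needed. We use coordinates $z=x+iy$ on $U_j(\epsilon)$ in which $\varphi=dz^2$, with $x\in\R/c_j\Z$ and $y\in(0,\epsilon)$, where $y=0$ is the end. In these coordinates $\mu_\varphi\psi_n=\psi_n(z)\,d\bar z/dz\cdot dz^2$, so $\int_{U_j}\mu_\varphi\psi_n=\int_0^\epsilon\int_0^{c_j}\psi_n(x+iy)\,dx\,dy$.

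The function $\psi_n(z)$ is holomorphic on the half-open cylinder. We expand it in a Laurent series in $w=e^{2\pi i z/c_j}$. The inner integral $\int_0^{c_j}\psi_n(x+iy)\,dx$ then equals $c_j$ times the zeroth Fourier coefficient $a_0^{(n)}$, which is independent of $y$ (the $w$-variable sees the end as a puncture or as a circle). Hence
\[
\int_{U_j(\epsilon)}\mu_\varphi\psi_n=\epsilon\,c_j\,a_0^{(n)}.
\]
The constant $a_0^{(n)}$ can be computed on any core curve of $\Pi_j$ in the compact part, say at height $y=2\epsilon$. There the uniform convergence gives $a_0^{(n)}\to0$. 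Consequently the collar contributions tend to zero for each fixed $\epsilon$, without any $L^1$ control near the ends.

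Combining the two parts, we conclude that $\int_S\mu_\varphi\psi_n\to0$. The point to check carefully is the holomorphy, and hence the Fourier argument, on the closed-up cylinder; only the mean-value identity is needed. Since the identity $\int_0^{c_j}\psi_n(x+iy)\,dx=c_ja_0^{(n)}$ is exact, we do not even need to let $\epsilon\to0$.
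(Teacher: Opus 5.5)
Your core mechanism is the same as the paper's. On a flat cylinder the horizontal average $\int_0^{c_j}\psi_n(x+iy)\,dx$ does not depend on $y$, so the integral of $\mu_\varphi\psi_n$ over a collar is fixed by its value on one compact core curve. The paper does the same computation in round coordinates, obtaining $2I_n\log r_0$. The gap is in how you treat punctures; if $S$ has none, your argument is complete.

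Your claim that every end of $S$ is the end of a peripheral annulus is false for punctures. Since $\varphi$ is integrable, a puncture is at worst a simple pole of $\varphi$ and sits on the boundary of the annuli of the decomposition. It is never the end of a cylinder: a cylinder of finite height has boundary circles of positive length, and an end that is a single point would force infinite height and infinite $|\varphi|$-area. For the same reason the ``puncture'' alternative in your $w$-variable never occurs. Consequently no collar $U_j(\epsilon)$ surrounds a puncture, your set $E$ contains punctured neighbourhoods of the punctures, and $E$ is not compact in $S$. Moreover, each $\psi_n$ may itself have a simple pole at a puncture, so $\psi_n$ is not even bounded on $E$. The assertion that $\psi_n\to 0$ uniformly on $E$ therefore fails. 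This case matters, since the lemma is applied to the $3$-punctured disc $S_0$.

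Either of two repairs works.

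\textbf{Local repair at a puncture.} Let $\zeta$ be a coordinate at the puncture and write $\psi_n=f_n\,d\zeta^2$. Then $\zeta f_n$ is holomorphic on $|\zeta|\le\rho$ and tends to $0$ uniformly on $|\zeta|=\rho$, hence on the whole disc by the maximum principle. This gives $|f_n|\le\epsilon_n/|\zeta|$ with $\epsilon_n\to 0$, and so $\int_{|\zeta|<\rho}|\psi_n|\le 2\pi\rho\,\epsilon_n\to 0$.

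\textbf{Drop the compact part.} Your identity holds for every $y\in(0,h_j)$. Hence $\int_{\Pi_j}\mu_\varphi\psi_n=h_jc_ja_0^{(n,j)}$ exactly, for every annulus $\Pi_j$ of the decomposition. These annuli cover $S$ up to a set of measure zero, so $\int_S\mu_\varphi\psi_n$ is a finite sum of terms that each tend to zero. This whole-annulus version is the paper's argument, and it handles ideal boundary components and punctures at once.
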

\begin{remark} Note that we do not have to assume that the $L^1$-norm of the sequence $\psi_n$ is uniformly bounded in $n$.
\end{remark}
\begin{proof} Let $\Pi_1,...,\Pi_k$ denote the  peripheral annuli from the decomposition of $S$ into annuli which are foliated by closed horizontal trajectories of $\varphi$. Since $\psi_n \to 0$ on compact subset of $S$, to prove the lemma it suffices to prove that for each $\Pi\in \{\Pi_1,...,\Pi_k\}$ we have
\begin{equation}\label{eq-tram}
\lim\limits_{n\to \infty} \int\limits_\Pi \mu_\varphi \psi_n = 0.
\end{equation}
Let $r_0>1$ be such that $\Pi$ is conformally equivalent to the annulus $A=\{r_{0}^{-1}< |z| <r_0\}$.  
Then 
$$
\mu_\varphi(z)=\frac{z^2}{|z|^2}\frac{d\overline{z}}{dz},\quad z\in A.
$$ 
Moreover, we let $f_n$ be the holomorphic functions on $A$ such that $\psi_n=f_ndz^2$. We have
\begin{equation}\label{eq-mol}
\int\limits_\Pi \mu_\varphi \psi_n=\int\limits_{r_{0}^{-1}< |z| <r_0}\frac{z^2}{|z|^2}\, f_n(z) \, dxdy. 
\end{equation}
Let $g_n(z)=z^2f_n(z)$, and set
$$
I_n=\int\limits_{0}^{2\pi} g_n(re^{i\theta})\, d\theta.
$$
Since $g_n$ is holomorphic the integral $I_n$ does not depend on $r$. Moreover, since $\psi_n\to 0$ on compact subsets of $S$ it follows that $g_n\to 0$ on compacts subsets of $A$. Thus, 
\begin{equation}\label{eq-mol-1}
\lim_{n\to \infty} I_n=0.
\end{equation}
On the other hand, passing to polar coordinates we get
$$
\int\limits_{r_{0}^{-1}< |z| <r_0}\frac{z^2}{|z|^2}\, f_n(z) \, dxdy=\int\limits_{r^{-1}_{0}}^{r_0}\int\limits_{0}^{2\pi} g_n(re^{i\theta}) \frac{1}{r}\, d\theta dr=
I_n\int\limits_{r^{-1}_{0}}^{r_0} \frac{1}{r}\, dr =2 I_n \log r_0.
$$
Applying (\ref{eq-mol-1}) and (\ref{eq-mol}) proves the lemma.
\end{proof}

\begin{proposition}\label{prop-tame} Let $L:T_S\T(S)\to \C$ be a linear functional of norm one. Suppose  that $L([\mu_\varphi]^*)=1$ for some Jenkins-Strebel differential $\varphi\in \QD(S)$ of norm one. Then 
\begin{equation}\label{eq-tame}
L([\nu]^*)=\int\limits_S\varphi\nu,
\end{equation}
for all $\nu\in \BD(S)$.
\end{proposition}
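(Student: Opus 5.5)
The plan is to represent $L$ as a weak$^*$ limit of holomorphic quadratic differentials, identify the limit as $\varphi$ using Lemma \ref{lemma-tame}, and then show the convergence is actually in $L^1$. By Subsection \ref{subsection-well}, $T_S\T(S)\cong \BD(S)/\BD_0(S)\cong \QD(S)^*$, so $L$ is an element of norm one of the bidual $\QD(S)^{**}$.

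\emph{Step 1 (approximating net).} By Goldstine's theorem there is a net $\{\psi_\alpha\}\subset\QD(S)$ with $\|\psi_\alpha\|_1\le 1$ and
$$
\int_S\nu\psi_\alpha\to L([\nu]^*)\quad\text{for every }\nu\in\BD(S).
$$
Since the $\psi_\alpha$ are bounded in $L^1$, they are locally uniformly bounded. After passing to a subnet, $\psi_\alpha\to\psi$ locally uniformly on $S$, where $\psi\in\QD(S)$ and $\|\psi\|_1\le 1$ by Fatou's lemma. Set $\eta_\alpha=\psi_\alpha-\psi$, so $\eta_\alpha\to0$ on compact subsets of $S$.

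\emph{Step 2 (identify $\psi=\varphi$).} The proof of Lemma \ref{lemma-tame} only uses that the circle averages $I_n$ tend to zero, so it applies verbatim to nets. Hence $\int_S\mu_\varphi\eta_\alpha\to0$, and therefore $\int_S\mu_\varphi\psi=L([\mu_\varphi]^*)=1$. This gives
$$
1=\Real\int_S\psi\frac{|\varphi|}{\varphi}\le\int_S|\psi|\le 1 .
$$
Equality forces $\psi/\varphi\ge0$ almost everywhere. A meromorphic function with nonnegative real values is a constant, and that constant is $1$ by normalization, so $\psi=\varphi$.

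\emph{Step 3 (the main obstacle: $\|\eta_\alpha\|_1\to0$).} If this holds, then $\int\nu\eta_\alpha\to0$ and \eqref{eq-tame} follows. The danger is that mass of $\eta_\alpha$ escapes to the ends of $S$.

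In the natural coordinate of $\varphi$, write $\eta_\alpha=(s_\alpha+\imu w_\alpha)\,\varphi/|\varphi|$ with $s_\alpha,w_\alpha$ real. Then $\Real\int\mu_\varphi\eta_\alpha=\int s_\alpha\to0$. The constraint $\|\varphi+\eta_\alpha\|_1\le1$ reads
$$
\int\sqrt{(|\varphi|+s_\alpha)^2+w_\alpha^2}\le 1=\int|\varphi| .
$$
Fix a compact $E\subset S$ whose complement, a union of neighbourhoods of the ends, satisfies $\int_{S\setminus E}|\varphi|<\epsilon$; on $E$ we have $\eta_\alpha\to0$ uniformly. Write $A_\alpha=|\varphi|+s_\alpha$.

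First, from $\sqrt{A^2+w^2}\ge |A|$, the inequality above gives $\int (A_\alpha)^-\to 0$ up to $O(\epsilon)$. Since $s_\alpha^-\le A_\alpha^-+|\varphi|$ on $S\setminus E$, we get $\int s_\alpha^-=O(\epsilon)+o(1)$. Together with $\int s_\alpha\to0$ this gives $\|s_\alpha\|_1=O(\epsilon)+o(1)$.

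Second, the pointwise inequality $\sqrt{A^2+w^2}-A\ge |w|-2|A|$, integrated over $S\setminus E$, uses the smallness of the total defect $\int(\sqrt{A_\alpha^2+w_\alpha^2}-A_\alpha)$ (which follows from the constraint and $\int s_\alpha\to0$). It yields $\int_{S\setminus E}|w_\alpha|=O(\epsilon)+o(1)$.

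Letting $\epsilon\to0$ gives $\|\eta_\alpha\|_1\to0$, which completes the argument.
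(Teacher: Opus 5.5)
Your proposal is correct and follows essentially the same route as the paper: represent $L$ as a weak$^*$ limit of quadratic differentials, take a locally uniform limit $\psi$, identify $\psi=\varphi$ via Lemma \ref{lemma-tame} and the equality case of $\Real\int_S\mu_\varphi\psi\le\|\psi\|_1$, and then upgrade to $L^1$ convergence. The differences are cosmetic. The paper fixes $\nu$ and uses a sequence adapted to $\mathrm{span}\{\mu_\varphi,\nu\}$ rather than a single Goldstine net. It also gets your Step 3 in one line from the Scheff\'e-type fact that locally uniform convergence together with $\|\psi_n\|_1\to\|\varphi\|_1$ forces $\|\psi_n-\varphi\|_1\to 0$; you already have that norm convergence, since $\|\psi_\alpha\|_1\ge\Real\int_S\mu_\varphi\psi_\alpha\to 1$.
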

\begin{proof} As stated in Section \ref{subsection-well}
we know that $T_S\T(S)$ is isometric to the dual of $\QD(S)$. 
The Banach-Alaoglu theorem implies that for any finite dimensional subspace  $V< \BD(S)$ there exists a sequence $\psi_n\in \QD(S)$, of norm one elements, such that 
$L([\mu]^*)=\lim_{n\to \infty} \int\limits_S \mu \varphi_n$, for every $\mu\in V$.

Fix $\nu \in \BD(S)$. Thus, there exists a sequence $\psi_n\in \QD(S)$, with $||\psi_n||_1=1$, such that 
\begin{equation}\label{eq-tame-1-1}
L([\mu_\varphi]^*)=\lim_{n\to \infty} \int\limits_S \mu_\varphi \psi_n,
\end{equation}
and
\begin{equation}\label{eq-tame-1-2}
L([\nu]^*)=\lim_{n\to \infty} \int\limits_S \nu \psi_n,
\end{equation}
After passing onto a sequence if necessary,   
$\psi_n$ converges uniformly on compact subset of $S$ to some $\psi\in \QD(S)$, with   $||\psi||_1\leq 1$. 

\begin{claim}\label{claim-tame} The equalities $||\psi||_1=1$, and $\int\limits_S \mu_\varphi \psi=1$, hold. 
\end{claim}
\begin{proof} The sequence $\psi-\psi_n$ converges to zero on compact subset of $S$. From Lemma \ref{lemma-tame}  we conclude that  $\int\limits_S \mu_\varphi (\psi-\psi_n)\to 0$, $n\to \infty$. Combining this with (\ref{eq-tame-1-1}), and the fact that  $L([\mu_\varphi]^*)=1$, implies  $\int\limits_S \mu_\varphi \psi=1$. Combining this equality  with  
the fact $||\psi||_1\leq 1$ shows that we must have $||\psi||_1=1$.
 \end{proof} 
Combining the two equalities from Claim \ref{claim-tame}, we obtain
$$
1=\text{Re}\int\limits_S \mu_\varphi \psi\le \int\limits_S |\psi|=1.
$$
The conclusion is that  $\psi$ is a positive scalar multiple of $\varphi$ almost everywhere on $S$. Since both $\psi$ and $\varphi$ are holomorphic, and $||\psi||_1=||\varphi||_1=1$, the equality $\psi=\varphi$ holds.  

Now, since $\psi_n\to \varphi$, when $n\to \infty$,  on compact subsets of $S$, and since $||\psi_n||_1=||\psi||_1$, for every $n$, it follows that $||\psi_n-\varphi||_1\to 0$, when $n\to \infty$. Combining this with (\ref{eq-tame-1-2})  proves (\ref{eq-tame}).  
\end{proof}

\end{document}